\newcommand{\noun}[1]{\textsc{#1}}
\numberwithin{equation}{section}
\numberwithin{figure}{section}
\theoremstyle{plain}
\newtheorem{thm}{\protect\theoremname}[section]
\theoremstyle{remark}
\newtheorem{rem}[thm]{\protect\remarkname}
\theoremstyle{plain}
\newtheorem{cor}[thm]{\protect\corollaryname}
\theoremstyle{definition}
\newtheorem{defn}[thm]{\protect\definitionname}
\theoremstyle{plain}
\newtheorem{lem}[thm]{\protect\lemmaname}
\theoremstyle{plain}
\newtheorem{prop}[thm]{\protect\propositionname}
\theoremstyle{definition}
\newtheorem{example}[thm]{\protect\examplename}
\providecommand{\corollaryname}{Corollary}
\providecommand{\definitionname}{Definition}
\providecommand{\examplename}{Example}
\providecommand{\lemmaname}{Lemma}
\providecommand{\propositionname}{Proposition}
\providecommand{\remarkname}{Remark}
\providecommand{\theoremname}{Theorem}
\begin{document}
\title{Jordan-Lie inner ideals of finite dimensional associative algebras}
\author{Alexander Baranov}
\thanks{Supported by the University of Leicester}
\address{Department of Mathematics, University of Leicester, Leicester, LE1
7RH, UK}
\email{ab155@le.ac.uk }
\author{Hasan M. Shlaka}
\thanks{Supported by the Higher Committee for Educational Development in Iraq
(HCED Iraq). }
\address{Department of Mathematics, University of Leicester, Leicester, LE1
7RH, UK; Department of Chemical Engineering, University of Kufa, Al-Najaf,
Iraq.}
\email{hasan.shlaka@uokufa.edu.iq}
\begin{abstract}
We study Jordan-Lie inner ideals of finite dimensional associative
algebras and the corresponding Lie algebras and prove that they admit
Levi decompositions. Moreover, we classify Jordan-Lie inner ideals
satisfying a certain minimality condition and show that they are generated
by pairs of idempotents.
\end{abstract}

\keywords{Jordan-Lie inner ideals, Levi decomposition, idempotents, Lie structure
of an associative algebra.}
\subjclass[2000]{17B60}

\maketitle
\global\long\def\bbR{\mathbb{R}}%

\global\long\def\ccR{\mathcal{R}}%

\begin{singlespace}
\global\long\def\bbF{\mathbb{F}}%

\end{singlespace}

\global\long\def\ccF{\mathcal{F}}%

\global\long\def\ccL{\mathcal{L}}%

\global\long\def\ccP{\mathcal{P}}%

\global\long\def\ccB{\mathcal{B}}%

\global\long\def\ccM{\mathcal{M}}%

\begin{singlespace}
\global\long\def\dlim{\operatorname{\underrightarrow{{\rm lim}}}}%

\global\long\def\ker{\operatorname{\rm ker}}%

\global\long\def\Im{\operatorname{\rm Im}}%

\global\long\def\End{\operatorname{\rm End}}%

\global\long\def\dim{\operatorname{\rm dim}}%

\global\long\def\core{\operatorname{\rm core}}%
 
\end{singlespace}

\global\long\def\skew{\operatorname{\rm skew}}%

\global\long\def\Soc{\operatorname{\rm Soc}}%

\global\long\def\Range{\operatorname{\rm Range}}%

\global\long\def\rank{\operatorname{\rm rank}}%

\global\long\def\ad{\operatorname{\rm ad}}%

\global\long\def\gl{\operatorname{\rm \mathfrak{gl}}}%

\global\long\def\sl{\operatorname{\rm \mathfrak{sl}}}%

\global\long\def\sp{\operatorname{\rm \mathfrak{sp}}}%

\global\long\def\so{\operatorname{\rm \mathfrak{so}}}%

\global\long\def\su{\operatorname{\rm \mathfrak{su^{*}}}}%

\global\long\def\u{\operatorname{\rm \mathfrak{u^{*}}}}%

\global\long\def\rank{\operatorname{\rm rk}}%

\global\long\def\rad{\operatorname{\rm rad}}%

\section{Introduction}

Let $L$ be a Lie algebra. A subspace $B$ of $L$ is said to be an
\emph{inner ideal} of $L$ if $[B,[B,L]]\subseteq B$. Note that every
ideal is an inner ideal. On the other hand, there are inner ideals
which are not even subalgebras. This makes them notoriously difficult
to study. Inner ideals of Lie algebras were first introduced by Benkart
\cite{Benkart-1,Benkart}. She showed that inner ideals and $\ad$-nilpotent
elements of Lie algebras are closely related. Since certain restrictions
on the $\ad$-nilpotent elements yield an elementary criterion for
distinguishing the non-classical from classical simple Lie algebras
in positive characteristic, inner ideals play a fundamental role in
classifying Lie algebras \cite{Premet,Premet1}. Inner ideals are
useful in constructing grading for Lie algebras \cite{Lop=000026Gar=000026Loz=000026Neh}.
It was shown in \cite{Lop=000026Gar=000026Loz} that inner ideals
play role similar to that of one-sided ideals in associative algebras
and can be used to develop Artinian structure theory for Lie algebras.
Inner ideals of classical Lie algebras were classified by Benkart
and Fernández López \cite{Benkart-1,Ben=000026Lop}, using the fact
that these algebras can be obtained as the derived Lie subalgebras
of (involution) simple Artinian associative rings. In this paper we
use a similar approach to study inner ideals of the derived Lie subalgebras
of finite dimensional associative algebras. These algebras generalize
the class of simple Lie algebras of classical type and are closely
related to the so-called root-graded Lie algebras \cite{Baranov1}.
They are also important in developing representation theory of non-semisimple
Lie algebras (see \cite{Bav=000026Zal,Bav=000026Zal-1}). As we do
not require our algebras to be semisimple we have a lot more inner
ideals to take care of (including all ideals!), so some reasonable
restrictions are needed. We believe that such a restriction is the
notion of a Jordan-Lie inner ideal introduced by Fernández López in
\cite{Lopez}. We need some notation to state our main results. 

The ground field $\bbF$ is algebraically closed of characteristic
$p\ge0$. Let $A$ be a finite dimensional associative algebra over
$\bbF$ and let $R$ be the radical of $A$. Recall that $A$ becomes
a Lie algebra $A^{(-)}$ under $[x,y]=xy-yx$. Put $A^{(0)}=A^{(-)}$
and $A^{(k)}=[A^{(k-1)},A^{(k-1)}]$, $k\geq1$. Let $L=A^{(k)}$
for some $k\ge0$ and let $B$ be an inner ideal of $L$. Then $B$
is said to be \emph{Jordan-Lie} if $B^{2}=0$ (in that case, $B$
is also an inner ideal of the Jordan algebra $B^{+}$\cite{Lopez}).
Denote by $\bar{B}$ the image of $B$ in $\bar{L}=L/R\cap L$. Let
$X$ be an inner ideal of $\bar{L}$. We say that $B$ is \emph{$X$-minimal}
(or simply, \emph{bar-minimal}) if $\bar{B}=X$ and for every inner
ideal $B'$ of $L$ with $\bar{B}'=X$ and $B'\subseteq B$ we have
$B'=B$. Let $e$ and $f$ be idempotents in $A$. Then the\emph{
idempotent pair} $(e,f)$ is said to be \emph{orthogonal }if $ef=fe=0$
and \emph{strict }if for each simple component \emph{$S$ }of $\bar{A}=A/R$,
the projections of $\bar{e}$ and $\bar{f}$ on $S$ are either both
zero or both non-zero. We are now ready to state our main results.
\begin{thm}
\label{thm:main iff} Let $A$ be a finite dimensional associative
algebra (over an algebraically closed field of characteristic $p\ge0$)
and let $B$ be a Jordan-Lie inner ideal of $L=A^{(k)}$ ($k\ge0$).
Suppose $p\neq2,3$. Then $B$ is bar-minimal if and only if $B=eAf$
where $(e,f)$ is a strict orthogonal idempotent pair in $A$.
\end{thm}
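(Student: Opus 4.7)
The plan is to reduce both directions to the semisimple quotient $\bar A=A/R$. In that setting the result should already be in hand (presumably established earlier as the case $R=0$ in the spirit of Benkart--Fern\'andez L\'opez): every Jordan-Lie inner ideal of $\bar A^{(k)}$ has the form $\bar e\bar A\bar f$ for an orthogonal idempotent pair, and strict orthogonality of $(\bar e,\bar f)$ corresponds to bar-minimality in that setting.

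For the $(\Leftarrow)$ direction, observe first that $B=eAf$ is Jordan-Lie since $(eAf)^{2}\subseteq eA(fe)Af=0$, and a Lie inner ideal of $L$ because the expansion $[x,[x',y]]=xx'y-xyx'-x'yx+yx'x$ reduces, for $x,x'\in eAf$, to the middle two terms, each of which lies in $eAf\cdot L\cdot eAf\subseteq eAf$. For bar-minimality I would take any inner ideal $B'\subseteq B$ of $L$ with $\bar B'=\bar B$; noting that $x=exf$ for $x\in eAf$ gives $R\cap eAf=eRf$, so the equality of images yields $eAf=B'+eRf$. Iterating this identity while using the inner-ideal property of $B'$ and the nilpotency of $R$ should collapse the gap to zero, with the strictness of $(e,f)$ ensuring that $\bar B$ has the expected shape in every simple component of $\bar A$.

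For the $(\Rightarrow)$ direction, let $B$ be a bar-minimal Jordan-Lie inner ideal of $L$; its image $\bar B$ is a Jordan-Lie inner ideal of $\bar L$, so by the semisimple case $\bar B=\bar e_{0}\bar A\bar f_{0}$ for a strict orthogonal pair in $\bar A$. Lift $\bar e_{0}$ to an idempotent $e\in A$ by the standard idempotent-lifting lemma for the nilpotent ideal $R$, and then lift $\bar f_{0}$ to an idempotent $f\in(1-e)A(1-e)$; this gives $ef=fe=0$ and preserves strictness. Then $eAf$ is a Jordan-Lie inner ideal of $L$ with $\overline{eAf}=\bar B$. The crucial remaining step is to arrange the lifts so that $B\subseteq eAf$; once this is done, the bar-minimality of $eAf$ already proved in $(\Leftarrow)$, combined with $\bar B=\overline{eAf}$ and $B\subseteq eAf$, forces $B=eAf$.

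The main obstacle is this alignment step in $(\Rightarrow)$: choosing the lifts $e,f$ so that $B$ itself, not merely $\bar B$, is contained in $eAf$. This is where the hypothesis $p\neq2,3$ should enter, via the Jordan-Peirce calculus associated to the Jordan-Lie condition $B^{2}=0$ (one exploits inner automorphisms coming from Jordan squares of elements of $B$ to conjugate $B$ into the desired Peirce component). The remaining ingredients---reduction modulo $R$, idempotent lifting, and the nilpotent-radical manipulation in the $(\Leftarrow)$ half---are essentially formal.
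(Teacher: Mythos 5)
Your outline correctly locates the two points where all the work lies, but it supplies neither, and together they are essentially the whole content of the theorem. In the $(\Leftarrow)$ direction, the step ``iterating this identity while using the inner-ideal property of $B'$ and the nilpotency of $R$ should collapse the gap to zero'' is not an argument. From $eAf=B'+eRf$ and $\{B',L,B'\}\subseteq B'$ you only learn that $B'$ contains certain elements $esf+r_s$ and their Jordan triple products; there is no visible mechanism forcing $eRf\subseteq B'$, and no indication of where strictness actually enters. The paper does not prove this direction by iteration at all: it first establishes the $(\Rightarrow)$ direction, applies it to a bar-minimal $C\subseteq B$ to get $C=e_1Af_1$ with $(e_1,f_1)$ strict orthogonal, and then invokes the poset theorem for idempotent pairs (Theorem \ref{thm:eAf sub e'Af' imply ...}) to replace $(e_1,f_1)$ by a strict pair $(e_2,f_2)\le(e,f)$ with $e_2Af_2=C$, finally showing $e_2=e$ and $f_2=f$ by the computation $er=re=r$, $r^2=-r$ and nilpotency of $R$. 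None of this machinery appears in your sketch, and note the logical order: since your $(\Rightarrow)$ leans on the bar-minimality of $eAf$, you cannot in turn prove $(\Leftarrow)$ via $(\Rightarrow)$ as the paper does, so your iteration would have to stand on its own.

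In the $(\Rightarrow)$ direction you explicitly flag the alignment of $B$ with the Peirce component $eAf$ as ``the main obstacle'' and offer only the phrases ``Jordan--Peirce calculus'' and ``inner automorphisms coming from Jordan squares.'' That obstacle \emph{is} the theorem. The paper's route is long: reduce to the $1$-perfect radical $\ccP_{1}(A)$ via $L$-perfectness and the core construction (Lemmas \ref{lem: B =00003D =00005BB,=00005BB,L=00005D=00005D } and \ref{lem:B sub P(A)}); prove a splitting theorem for $1$-perfect $A$ with $R^{2}=0$ by decomposing $R$ into irreducible $S$-bimodule types and, in each of four cases, constructing explicit inner automorphisms $a\mapsto(1+q)a(1-q)$ with $q\in R$ that move a set of matrix-unit representatives of $B$ into a Levi subalgebra; then induct on the nilpotency degree of $R$ and on the bimodule length, using large subalgebras and Proposition \ref{lem:Large is splits} to transfer splitting upward; and finally show $eRf\subseteq B$ directly, so that $eAf\subseteq B$ and bar-minimality of $B$ forces equality. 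Observe that the paper's decisive inclusion is $eAf\subseteq B$, the reverse of the one you propose to establish. Your proposal names the right targets and the right general tools (radical reduction, idempotent lifting, inner automorphisms), but it proves neither half.
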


Let $e$, $e'$, $f$ and $f'$ be idempotents in $A$. We write $(e,f)\overset{\ccL\ccR}{\le}(e',f')$
if $e'e=e$ and $ff'=f$; $(e,f)\le(e',f')$ if $e'e=ee'=e$ and $ff'=f'f=f$;
$(e,f)\overset{\ccL\ccR}{\sim}(e',f')$ if $(e,f)\overset{\ccL\ccR}{\le}(e',f')$
and $(e',f')\overset{\ccL\ccR}{\le}(e,f)$ (see Definition \ref{def:idem pair}).
Let $E$ be the set of all strict orthogonal idempotent pairs in $A$.
Then $\overset{\ccL\ccR}{\le}$ is a preorder on $E$ and it induces
a partial order on the quotient set $E/\overset{\ccL\ccR}{\sim}$,
see Remark \ref{rem:LRpreorder}(1). The following result is used
to describe the poset of bar-minimal Jordan-Lie inner ideals of $A$
and has an independent interest.
\begin{thm}
\label{thm:eAf sub e'Af' imply ...} Let $A$ be an Artinian ring
or a finite dimensional associative algebra over any field and let
$(e,f)$ and $(e',f')$ be idempotent pairs in $A$. Suppose that
$(e,f)$ is strict. Then the following hold.
\end{thm}

\begin{enumerate}
\item[(i)]  If $(e,f)\ne(0,0)$ then $eAf\ne0$.
\item[(ii)]  $eAf\subseteq e'Af'$ if and only if $(e,f)\overset{\ccL\ccR}{\le}(e',f')$.
\item[(iii)]  Suppose that $(e',f')$ is strict. Then $eAf=e'Af'$ if and only
if $(e,f)\overset{\ccL\ccR}{\sim}(e',f')$.
\item[(iv)]  The map $\xi:(e,f)\mapsto eAf$ induces a bijection from the poset
$(E/\overset{\ccL\ccR}{\sim},\overset{\ccL\ccR}{\le})$ to the poset
$(\xi(E),\subseteq)$.
\item[(v)]  Suppose that $eAf\subseteq e'Af'$. Then there exists a strict idempotent
pair $(e'',f'')$ in $A$ such that $(e'',f'')\le(e',f')$, $(e'',f'')\overset{\ccL\ccR}{\sim}(e,f)$
and $e''Af''=eAf$.
\end{enumerate}
\begin{rem}
It is well-known that every finite dimensional unital algebra is Artinian
as a ring. In particular, semisimple finite dimensional algebras are
Artinian. However, this is not true for non-unital algebras (e.g.
for the one dimensional algebra over $\mathbb{Q}$ with zero multiplication).
This is why we refer to both Artinian rings and finite dimensional
algebras in the theorem above.
\end{rem}

\begin{cor}
\label{cor:innerposet} Let $A$ be a finite dimensional associative
algebra and let $L=A^{(k)}$ ($k\ge0$). Suppose $p\neq2,3$. Then
the map $\xi:(e,f)\mapsto eAf$ induces an isomorphism from the poset
$(E/\overset{\ccL\ccR}{\sim},\overset{\ccL\ccR}{\le})$ to the poset
of all bar-minimal Jordan-Lie inner ideals of $L$.
\end{cor}

Let $B$ be an inner ideal of $L=A^{(k)}$ ($k\ge0$). Then $B$ is
said to be \emph{regular} (with respect to $A$) if $B$ is Jordan-Lie
(i.e. $B^{2}=0$) and $BAB\subseteq B$ (see also Proposition \ref{prop:Bav=000026Row 4.9}
for an alternative description in terms of the orthogonal pairs of
one-sided ideals of $A$). 
\begin{cor}
\label{cor:main regular} Let $A$ be a finite dimensional associative
algebra and let $L=A^{(k)}$ ($k\ge0$). Let $B$ be a Jordan-Lie
inner ideal of $L$. Suppose $p\neq2,3$ and $B$ is bar-minimal.
Then $B$ is regular. 
\end{cor}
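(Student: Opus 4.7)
The plan is to deduce the corollary directly from Theorem \ref{thm:main iff}, so the work reduces to a short algebraic verification. Since $p\ne 2,3$ and $B$ is a bar-minimal Jordan-Lie inner ideal of $L=A^{(k)}$, Theorem \ref{thm:main iff} gives us $B=eAf$ for some strict orthogonal idempotent pair $(e,f)$ in $A$. With this concrete description of $B$, verifying regularity amounts to checking the inclusion $BAB\subseteq B$, since $B^{2}=0$ is built into the Jordan-Lie hypothesis (and is also visible directly from $fe=0$).

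Concretely, I would compute
\[
BAB \;=\; (eAf)\,A\,(eAf) \;=\; eA(fAe)Af \;\subseteq\; eA\cdot A\cdot Af \;\subseteq\; eAf \;=\; B,
\]
using only associativity of $A$ and the identities $e^{2}=e$, $f^{2}=f$. So $B$ is regular with respect to $A$. One should also note (for completeness) that $B^{2}=eAf\cdot eAf = eA(fe)Af = 0$, which re-confirms the Jordan-Lie property and is consistent with $(e,f)$ being an orthogonal pair.

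The main obstacle in proving the corollary is entirely absorbed into Theorem \ref{thm:main iff}: once the explicit form $B=eAf$ is available, regularity is forced by the associative structure and orthogonality $fe=0$. No further use of the strictness condition on the idempotent pair, of the hypothesis $p\ne 2,3$, or of bar-minimality beyond its consequence via the theorem is needed in this step. Thus the proof is essentially immediate, and I would present it as a two-line deduction referring back to Theorem \ref{thm:main iff}.
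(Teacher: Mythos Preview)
Your proof is correct and follows the same route as the paper: apply Theorem \ref{thm:main iff} to write $B=eAf$ with $(e,f)$ a strict orthogonal idempotent pair, and then observe that $eAf$ is regular. The only difference is cosmetic---the paper invokes Lemma \ref{eAf is regular} for this last step, whereas you carry out the one-line verification $BAB\subseteq eAf$ and $B^{2}=0$ directly.
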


It follows that all bar-minimal inner ideals are regular. It was also
proved in \cite[4.11]{Bav=000026Lop} that all maximal abelian inner
ideals of simple rings are regular. Note that any subspace $B$ of
$A$ satisfying the regularity conditions $B^{2}=0$ and $BAB\subseteq B$
is an inner ideal, i.e. $[B,[B,A]]\subseteq B$ holds. The first two
conditions are much easier to check, so it is important to know when
all Jordan-Lie inner ideals of $A^{(k)}$ are regular. We believe
that this holds for most finite dimensional algebras $A$. However,
exceptions do exists, as Examples \ref{nr-1} and \ref{nr} show.

Recall that every finite dimensional associative algebra $A$ over
an algebraically closed field admits Wedderburn-Malcev decomposition
$A=S\oplus R$ where $R$ is the radical of $A$ and $S$ is a maximal
semisimple subalgebra of $A$. A similar result (called Levi decomposition)
exist for Lie algebras. Following the Lie algebras terminology, we
refer to $S$ as a \emph{Levi subalgebra} of $A$. Let $B$ be an
inner ideal of $L=A^{(k)}$ ($k\ge0$). Then we say that $B$ \emph{splits}
in $A$ if there is a Levi subalgebra $S$ of $A$ such that $B=B_{S}\oplus B_{R}$,
where $B_{S}=B\cap S$ and $B_{R}=B\cap R$ (Definition \ref{def:splitness}).
\begin{cor}
\label{cor:main split} Let $A$ be a finite dimensional associative
algebra and let $L=A^{(k)}$ ($k\ge0$). Let $B$ be a Jordan-Lie
inner ideal of $L$. Suppose $p\neq2,3$. Then $B$ splits in $A$.
More exactly, there is a Levi subalgebra $S$ of $A$ and a strict
orthogonal idempotent pair $(e,f)$ in $S$ such that $B=eSf\oplus B_{R}$
with $eRf\subseteq B_{R}=B\cap R$.
\end{cor}

\section{Preliminaries}

Throughout this paper, unless otherwise specified, the ground field
$\bbF$ is algebraically closed of characteristic $p\ge0$; $A$ is
a finite dimensional associative algebra over $\bbF$; $R=\rad A$
is the radical of $A$; $S$ is a Levi (i.e. maximal semisimple) subalgebra
of $A$, so $A=S\oplus R$; $L=A^{(k)}$ for some $k\ge0$; $\rad L$
is the solvable radical of $L$ and $N=R\cap L$ is the \emph{nil-radical}
of $L$. If $V$ is a subspace of $A$, we denote by $\bar{V}$ its
image in $\bar{A}=A/R$. In particular, $\bar{L}=(L+R)/R\cong L/N$.
Since $R$ is a nilpotent ideal of $A$ the ideal $N=R\cap L$ of
$L$ is also nilpotent, so $N\subseteq\rad L$. It is easy to see
that $N=\rad L$ if $p=0$ and $k\ge1$, so $L/N$ is semisimple in
that case. Recall that a Lie algebra $L$ is \emph{perfect} if $[L,L]=L$.
We denote by $\ccM_{n}$ the algebra of $n\times n$ matrices over
$\bbF$ and by $\sl_{n}$ the Lie subalgebra of $\ccM_{n}$ consisting
of zero trace matrices, so $\sl_{n}=\ccM_{n}^{(1)}$.
\begin{defn}
Let $Q$ be a Lie algebra. We say that $Q$ is a \emph{quasi (semi)simple}
if $Q$ is perfect and $Q/Z(Q)$ is (semi)simple. 
\end{defn}

Herstein \cite[Theorem 4]{Herstein} proved that if $A$ is a simple
ring of characteristic different from $2$, then $A^{(1)}=[A,A]$
is a quasi simple Lie ring. As a special case, we note the following
well-known fact. 
\begin{lem}
\label{lem:sl_n} Let $p\ne2$, $n\ge2$ and let $A=\ccM_{n}$. Then
$[A,A]=\sl_{n}$ is quasi simple. In particular, $A^{(\infty)}=A^{(1)}$. 
\end{lem}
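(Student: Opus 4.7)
The plan is to give a direct and explicit proof using elementary matrix computations, rather than deducing this from Herstein's theorem, since the result is small enough to verify by hand and the exceptional features in characteristic $p\mid n$ deserve direct attention.

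First I would identify $[A,A]$ with $\sl_n(\bbF)$. One inclusion is immediate: the trace is a linear map whose composition with the commutator vanishes, so $[A,A]\subseteq\sl_n(\bbF)$. For the other direction, I would use the elementary matrices $E_{ij}$. The identities $[E_{ij},E_{jk}]=E_{ik}$ for $i\ne k$ and $[E_{ij},E_{ji}]=E_{ii}-E_{jj}$ produce every off-diagonal unit and every traceless diagonal matrix, hence span $\sl_n(\bbF)$. Note that this step does not yet use $p\ne2$.

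Next I would verify that $\sl_n(\bbF)$ is perfect. The same commutator identities applied inside $\sl_n$ show that each $E_{ik}$ with $i\ne k$ and each $E_{ii}-E_{jj}$ lies in $[\sl_n,\sl_n]$, so $[\sl_n,\sl_n]=\sl_n$. Once perfection is established, the statement $A^{(\infty)}=A^{(1)}$ is immediate by induction on $k$: $A^{(k)}=[A^{(1)},A^{(1)}]=A^{(1)}$ for every $k\ge1$, so the descending chain stabilises at $A^{(1)}$.

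The remaining task, and the only point where $p\ne2$ plays a genuine role, is to show that $\sl_n(\bbF)/Z(\sl_n(\bbF))$ is simple. I would first determine the centre: a matrix commuting with every $E_{ij}$ in $\sl_n$ must be scalar, so $Z(\sl_n(\bbF))=\bbF I_n\cap\sl_n(\bbF)$, which is $0$ when $p\nmid n$ and $\bbF I_n$ when $p\mid n$. To prove simplicity of the quotient, take a nonzero ideal $\bar{I}$ of $\sl_n/Z$, lift it to an ideal $I\supseteq Z$ of $\sl_n$, and pick $x\in I\setminus Z$. Using the action of the standard Cartan subalgebra $\mathfrak{h}$ of diagonal traceless matrices and the bracketing $[E_{ij},x]$, one sweeps out successively all root vectors $E_{ij}$ ($i\ne j$) and then, via $[E_{ij},E_{ji}]=E_{ii}-E_{jj}$, all of $\mathfrak{h}$, forcing $I=\sl_n$.

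The main obstacle is the last step in the case $p\mid n$ and $n=2$, which is excluded by the hypothesis $p\ne 2$; more generally one has to be careful that separating root vectors from the centre really works in small characteristic. The key mechanism is that for $p\ne 2$ the eigenvalues of $\operatorname{ad}(E_{ii}-E_{jj})$ on the root vectors $E_{kl}$ are distinct and nonzero in the relevant cases, which lets the standard argument for simplicity of $\operatorname{psl}_n$ go through.
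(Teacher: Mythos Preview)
Your approach is correct and genuinely different from the paper's. The paper does not prove this lemma at all: it simply records it as a well-known special case of Herstein's theorem that $[A,A]$ is quasi simple for any simple ring $A$ of characteristic $\ne 2$. You instead give a self-contained argument via elementary matrix identities, which has the advantage of making the role of the hypothesis $p\ne 2$ visible and of requiring no external reference; the paper's route is shorter but imports a nontrivial ring-theoretic result.

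One small correction to your write-up: it is not quite true that $p\ne 2$ enters only in the simplicity of the quotient. For $n=2$ there is no third index, so the identity $[E_{ik},E_{kj}]=E_{ij}$ is unavailable inside $\sl_2$; to get $E_{12}\in[\sl_2,\sl_2]$ you must use $[E_{11}-E_{22},E_{12}]=2E_{12}$, which already needs $p\ne 2$. Thus for $n=2$ the hypothesis is used both for perfection and for simplicity, while for $n\ge 3$ your ``same commutator identities'' remark is accurate and perfection holds in every characteristic. This is a minor adjustment and does not affect the validity of the overall argument.
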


Note that the case of $p=2$ is exceptional indeed as the algebra
$\sl_{2}$ is solvable in characteristic $2$. 
\begin{prop}
\label{prop:=00005BA,A=00005D quasi Levi} Suppose $A$ is semisimple
and $p\neq2$. Then $[A,A]$ is quasi semisimple. In particular, $A^{(\infty)}=A^{(1)}$. 
\end{prop}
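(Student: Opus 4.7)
The plan is to reduce the statement to Lemma \ref{lem:sl_n} via the Wedderburn--Artin decomposition. Since $A$ is finite dimensional semisimple over the algebraically closed field $\bbF$, we can write $A \cong \bigoplus_{i=1}^{r} M_{n_i}(\bbF)$ with each $n_i \ge 1$. Brackets respect this direct sum, so
\[
[A,A] = \bigoplus_{i=1}^{r} [M_{n_i}(\bbF), M_{n_i}(\bbF)].
\]
The summands with $n_i = 1$ are zero, and for $n_i \ge 2$ the hypothesis $p \ne 2$ lets us invoke Lemma \ref{lem:sl_n} to identify the $i$-th summand with $\sl_{n_i}(\bbF)$, which is quasi simple.

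Next I would show that a finite direct sum of (zero or) quasi simple Lie algebras is quasi semisimple. Each nonzero summand is perfect (being quasi simple), hence so is $[A,A]$. Since centers distribute over direct sums,
\[
Z([A,A]) = \bigoplus_{n_i \ge 2} Z(\sl_{n_i}(\bbF)),
\]
and consequently
\[
[A,A]/Z([A,A]) \;\cong\; \bigoplus_{n_i \ge 2} \sl_{n_i}(\bbF)/Z(\sl_{n_i}(\bbF)).
\]
By quasi simplicity each factor on the right is simple, so the quotient is semisimple. Thus $[A,A]$ is quasi semisimple.

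For the final assertion, since $A^{(1)} = [A,A]$ is perfect we have $A^{(k+1)} = [A^{(k)}, A^{(k)}] = A^{(1)}$ for all $k \ge 1$ by a trivial induction, so the derived series stabilises at $A^{(1)}$ and $A^{(\infty)} = A^{(1)}$. The only non-routine point is the stability of ``quasi semisimple'' under finite direct sums of quasi simple summands, and this is immediate from the commutation of centres and quotients with finite direct sums; I do not anticipate any real obstacle.
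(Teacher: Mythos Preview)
Your proof is correct and follows essentially the same route as the paper: decompose $A$ into matrix blocks $M_{n_i}(\bbF)$, apply Lemma~\ref{lem:sl_n} to each nontrivial block, and conclude. You are simply more explicit than the paper about why a direct sum of quasi simple summands is quasi semisimple and why perfectness gives $A^{(\infty)}=A^{(1)}$, but the argument is the same.
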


\begin{proof}
Since $A$ is semisimple, $A=\bigoplus_{i\in I}S_{i}$ where the $S_{i}$
are simple ideals of $A$. Since $\bbF$ is algebraically closed,
$S_{i}\cong\ccM_{n_{i}}$ for some $n_{i}$. Note that $[S_{i},S_{i}]=0$
if $n_{i}=1$ and $[S_{i},S_{i}]\cong\sl_{n_{i}}$ if $n_{i}\ge2$.
Now the result follows from Lemma \ref{lem:sl_n}.
\end{proof}
\begin{defn}
\label{def:quasi Levi} Let $M$ be a finite dimensional Lie algebra
and let $Q$ be a quasi semisimple subalgebra of $M$. We say that
$Q$ is a \emph{quasi Levi subalgebra} of $M$ if there is a solvable
ideal $P$ of $M$ such that $M=Q\oplus P$. In that case we say that
$M=Q\oplus P$ is a \emph{quasi Levi decomposition} of $M$. 
\end{defn}

Recall that $N=R\cap L$ is the nil-radical of $L=A^{(k)}$. 
\begin{prop}
\label{prop:A is strongly L=00003D=00005BA,A=00005D} Let $S$ be
a Levi subalgebra of $A$ and let $L=[A,A]$ and $Q=[S,S]$. Suppose
$p\neq2$. Then $N=[S,R]+[R,R]$, $Q$ is a quasi Levi subalgebra
of $L$ and $L=Q\oplus N$ is a quasi Levi decomposition of $L$.
Moreover, $N=[S,R]$ if $R^{2}=0$. 
\end{prop}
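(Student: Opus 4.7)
The plan is to compute $L = [A,A]$ directly from the Levi decomposition $A = S \oplus R$ and then read off the three conclusions.

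First, expanding $[A,A] = [S \oplus R, S \oplus R]$ by bilinearity and using $[R,S] = -[S,R]$, I obtain $L = [S,S] + [S,R] + [R,R] = Q + ([S,R] + [R,R])$. Now $Q \subseteq S$ while $[S,R] + [R,R] \subseteq R$ because $R$ is a two-sided associative ideal of $A$. Since $S \cap R = 0$ this sum is direct, giving $L = Q \oplus ([S,R] + [R,R])$ as vector spaces. Intersecting with $R$ and using $Q \cap R = 0$ yields $N = L \cap R = [S,R] + [R,R]$, which proves the first claim. The final claim on $R^{2}=0$ is then immediate: $[R,R] \subseteq R^{2} = 0$, so $N = [S,R]$.

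Next I check that $L = Q \oplus N$ is a quasi Levi decomposition. By Proposition \ref{prop:=00005BA,A=00005D quasi Levi} (applied to the semisimple algebra $S$, using $p \ne 2$), $Q = [S,S]$ is quasi semisimple, so it only remains to see that $N$ is a solvable ideal of $L$. Nilpotency (hence solvability) of $N$ is already noted in the preamble, since $R$ is a nilpotent ideal of $A$. For the ideal property, $R$ is a two-sided associative ideal of $A$, hence a Lie ideal of $A^{(-)}$, so $[L,N] \subseteq [A,R] \subseteq R$; on the other hand $N \subseteq L$ implies $[L,N] \subseteq L$, and therefore $[L,N] \subseteq L \cap R = N$. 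This gives the quasi Levi decomposition $L = Q \oplus N$ and shows $Q$ is a quasi Levi subalgebra of $L$.

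There is no real obstacle here: everything reduces to expanding a commutator over a vector space direct sum and observing that $R$ being a two-sided ideal of $A$ makes $N$ automatically a Lie ideal of $L$. The only point where care is needed is to invoke Proposition \ref{prop:=00005BA,A=00005D quasi Levi} (and hence the hypothesis $p \ne 2$) to guarantee that $Q$ is quasi semisimple rather than merely perfect; in characteristic $2$ the proposition would fail because $\sl_{2}(\bbF)$ is solvable.
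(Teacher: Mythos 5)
Your proof is correct and follows essentially the same route as the paper: expand $[S\oplus R,S\oplus R]$, identify $Q=[S,S]$ and $N=[S,R]+[R,R]=L\cap R$, and invoke Proposition \ref{prop:=00005BA,A=00005D quasi Levi} for quasi semisimplicity of $Q$. You merely fill in details the paper leaves implicit (directness of the sum, and that $N$ is a nilpotent ideal of $L$), which is fine.
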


\begin{proof}
We have $L=[A,A]=[S\oplus R,S\oplus R]=[S,S]+[S,R]+[R,R]=Q\oplus N$
where $Q=[S,S]$ is quasi semisimple by Proposition \ref{prop:=00005BA,A=00005D quasi Levi}
and $[S,R]+[R,R]=L\cap R=N$ is the nil-radical of $L$, as required. 
\end{proof}
A subspace $B$ of $A$ is said to be a \emph{Lie inner ideal} of
$A$ if $B$ is an inner ideal of $L=A^{(-)}$, that is $[B,[B,L]]\subseteq B$.
A subspace $B$ of $A$ is said to be a \emph{Jordan inner ideal}
of $A$ if $B$ is an inner ideal of the Jordan algebra $A^{(+)}$
\cite{Lopez}. If $B^{2}=0$, then $B$ is an inner ideal of the Jordan
algebra $A^{(+)}$ if and only if it is an inner ideal of the Lie
algebra $A^{(-)}$. Indeed, since $B^{2}=0$, one has 
\begin{equation}
[b,[b',x]]=-bxb'-b'xb\label{eq:bbx}
\end{equation}
 for all $b,b'\in B$ and all $x\in A$. This justifies the following
definition. 
\begin{defn}
\label{def:Jordan-Lie } \cite{Lopez} An inner ideal $B$ of $L=A^{(k)}$
is said to be \emph{Jordan-Lie if $B^{2}=0$.} 
\end{defn}

Note that every Jordan-Lie inner ideal $B$ is abelian, i.e. $[B,B]=0$.

It follows from Benkart's result \cite[Theorem 5.1]{Benkart-1} that
if $A$ is a simple Artinian ring of characteristic not $2$ or $3$,
then every proper inner ideal of $[A,A]/(Z(A)\cap[A,A])$ is Jordan-Lie.
For $b,b'\in B$ and $x\in L$, we denote by $\{b,x,b'\}$ the Jordan
triple product
\[
\{b,x,b'\}:=bxb'+b'xb.
\]
The following lemma follows immediately from (\ref{eq:bbx}) and the
definition. 
\begin{lem}
\label{lem:Jordan-Lie } Let $L=A^{(k)}$ for some $k\ge0$ and let
$B$ be a subspace of $L$. Then $B$ is a Jordan-Lie inner ideal
of $L$ if and only if $B^{2}=0$ and $\left\{ b,x,b'\right\} \in B$
for all $b,b'\in B$ and $x\in L$. 
\end{lem}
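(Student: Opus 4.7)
The plan is to use formula (\ref{eq:bbx}) as a direct bridge between the inner ideal condition $[B,[B,L]]\subseteq B$ and the Jordan triple condition $\{b,x,b'\}\in B$, so the argument is essentially a bookkeeping exercise in the definition rather than anything substantive. The key observation is that once $B^{2}=0$ is assumed, the bracket $[b,[b',x]]$ collapses to $-bxb'-b'xb=-\{b,x,b'\}$, and this identity is symmetric in the two variables $b,b'$.

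For the forward direction, I would suppose $B$ is a Jordan-Lie inner ideal of $L$. By Definition \ref{def:Jordan-Lie }, this means $B^{2}=0$ and $[B,[B,L]]\subseteq B$. Fix $b,b'\in B$ and $x\in L$. Then $[b,[b',x]]\in B$ by the inner ideal property, and since $B^{2}=0$, formula (\ref{eq:bbx}) gives $[b,[b',x]]=-\{b,x,b'\}$. Hence $\{b,x,b'\}\in B$, which is the claim.

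For the reverse direction, suppose $B\subseteq L$ satisfies $B^{2}=0$ and $\{b,x,b'\}\in B$ for all $b,b'\in B$ and $x\in L$. Again invoking (\ref{eq:bbx}), for any such $b,b',x$ we have $[b,[b',x]]=-\{b,x,b'\}\in B$, so $[B,[B,L]]\subseteq B$. Thus $B$ is an inner ideal of $L$, and together with $B^{2}=0$ this makes $B$ Jordan-Lie by Definition \ref{def:Jordan-Lie }.

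There is no real obstacle here: the only thing to double-check is that formula (\ref{eq:bbx}) really requires just $B^{2}=0$ (which is clear from its derivation, since one expands $[b,[b',x]]=b(b'x-xb')-(b'x-xb')b=bb'x-bxb'-b'xb+xb'b$ and uses $bb'=b'b=0$), and that the claim $\{b,x,b'\}\in B$ is stated symmetrically, which is consistent with the symmetry of the right-hand side of (\ref{eq:bbx}) in $b$ and $b'$.
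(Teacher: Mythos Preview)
Your proof is correct and is precisely the argument the paper has in mind: the paper simply states that the lemma ``follows immediately from (\ref{eq:bbx}) and the definition,'' and your write-up unpacks exactly that.
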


Recall that our algebra $A$ is non-unital in general. Let $\hat{A}=A+\bbF\boldsymbol{1}_{\hat{A}}$
be the algebra obtained from $A$ by adding the external identity
element. The following lemma shows that the Jordan-Lie inner ideals
of $\hat{A}^{(k)}$ are exactly those of $A^{(k)}$ for all $k\ge0$.
\begin{lem}
\label{lem:B is J-L of A unital} Let $B$ be a subspace of $A$.
Then $B$ is a Jordan-Lie inner ideal of $A^{(k)}$ if and only if
$B$ is a Jordan-Lie inner ideal of $\hat{A}^{(k)}$ ($k\ge0$). 
\end{lem}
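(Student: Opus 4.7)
The plan is to reduce the statement to the characterization given in Lemma~\ref{lem:Jordan-Lie }: a subspace $B$ is a Jordan-Lie inner ideal of $A^{(k)}$ (resp.\ $\hat{A}^{(k)}$) if and only if $B^{2}=0$ and the Jordan triple product $\{b,x,b'\}=bxb'+b'xb$ lies in $B$ for all $b,b'\in B$ and all $x\in A^{(k)}$ (resp.\ $x\in\hat{A}^{(k)}$). Since the condition $B^{2}=0$ is intrinsic to $B$ and insensitive to the ambient algebra, only the triple-product condition needs to be compared.

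First I would dispose of the case $k\ge1$. Because $\mathbf{1}_{\hat{A}}$ is central in $\hat{A}$, we have $[\hat{A},\hat{A}]=[A,A]$, and a routine induction on $k$ gives $\hat{A}^{(k)}=A^{(k)}$ for every $k\ge1$. So in this range the two statements are literally identical and there is nothing to prove.

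For the remaining case $k=0$, the ``if'' direction is immediate because $A^{(-)}\subseteq\hat{A}^{(-)}$. For the converse, assume $B$ is a Jordan-Lie inner ideal of $A^{(-)}$, take arbitrary $b,b'\in B$, and write a general element of $\hat{A}$ as $x=a+\alpha\mathbf{1}_{\hat{A}}$ with $a\in A$ and $\alpha\in\bbF$. Expanding the triple product yields
\[
\{b,x,b'\}=\{b,a,b'\}+\alpha(bb'+b'b),
\]
and the second summand vanishes because $B^{2}=0$. Hence $\{b,x,b'\}=\{b,a,b'\}\in B$ by hypothesis, which together with $B^{2}=0$ shows that $B$ is a Jordan-Lie inner ideal of $\hat{A}^{(-)}$.

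There is no real obstacle: the whole content of the lemma is the observation that the Jordan-Lie condition $B^{2}=0$ automatically absorbs any scalar multiple of the adjoined identity, so that the behaviour of $B$ on $\hat{A}$ is controlled by its behaviour on $A$.
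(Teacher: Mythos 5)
Your proof is correct and follows essentially the same route as the paper: reduce to $k=0$ via $\hat{A}^{(k)}=A^{(k)}$ for $k\ge1$, and observe that the adjoined identity contributes nothing to the inner-ideal condition. The only cosmetic difference is that the paper works with the Lie bracket directly (where centrality of $\mathbf{1}_{\hat{A}}$ gives $[B,[B,\hat{A}]]=[B,[B,A]]$ without invoking $B^{2}=0$), whereas you pass through the Jordan triple product and use $B^{2}=0$ to kill the scalar term; both are valid, and the paper's extra step showing $B\subseteq A$ is unnecessary under the stated hypothesis that $B$ is a subspace of $A$.
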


\begin{proof}
Note that $\hat{A}^{(k)}=A^{(k)}$ for all $k\ge1$, so we only need
to consider the case when $k=0$, i.e. $A^{(k)}=A^{(-)}$. If $B$
is a Jordan-Lie inner ideal of $A$ then $[B,[B,\hat{A}]]=[B,[B,A+\bbF\boldsymbol{1}_{\hat{A}}]]=[B,[B,A]]\subseteq B$,
so $B$ is a Jordan-Lie inner ideal of $\hat{A}$. Suppose now that
$B$ is a Jordan-Lie inner ideal of $\hat{A}$. Then $\tilde{B}=(B+A)/A$
is a Jordan-Lie inner ideal of $\hat{A}/A\cong\bbF$. Since $\tilde{B}^{2}=0$,
we get that $\tilde{B}=0$, so $B\subseteq A$. Therefore, $B$ is
a Jordan-Lie inner ideal of $A$. 
\end{proof}
We note the following standard properties of inner ideals. 
\begin{lem}
\label{lem:Bav=000026Row 2.16} Let $L$ be a Lie algebra  and let
$B$ be an inner ideal of $L$. 

(i) If $M$ is a subalgebra of $L$, then $B\cap M$ is an inner ideal
of $M$.

(ii) If $P$ is an ideal of $L$, then $(B+P)/P$ is an inner ideal
of $L/P$. Moreover, for every inner ideal $Q$ of $L/P$ there is
a unique inner ideal $C$ of $L$ containing $P$ such that $Q=C/P$.
\end{lem}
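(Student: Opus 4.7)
Both parts of the lemma are routine consequences of the definition of an inner ideal, namely that $[B,[B,L]]\subseteq B$. My plan is simply to chase elements through the relevant containments, using for (i) that $M$ is closed under the bracket and for (ii) that the bracket descends to the quotient.

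For part (i), I would pick arbitrary $b,b'\in B\cap M$ and $x\in M$, and verify that $[b,[b',x]]\in B\cap M$. The element $[b,[b',x]]$ lies in $B$ because $B$ is an inner ideal of $L$ and $M\subseteq L$, so $[b,[b',x]]\in[B,[B,L]]\subseteq B$. It also lies in $M$ because $M$ is a Lie subalgebra, hence closed under the bracket applied twice. Intersecting these two conclusions gives $[B\cap M,[B\cap M,M]]\subseteq B\cap M$.

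For part (ii), write $\pi\colon L\to L/P$ for the quotient map, so that $(B+P)/P=\pi(B)$. Pick $\bar b,\bar b'\in\pi(B)$ and $\bar x\in L/P$; lift to $b,b'\in B$ and $x\in L$. Then
\[
[\bar b,[\bar b',\bar x]]=\pi\bigl([b,[b',x]]\bigr),
\]
and $[b,[b',x]]\in[B,[B,L]]\subseteq B$, so its image lies in $\pi(B)=(B+P)/P$. Thus $(B+P)/P$ is an inner ideal of $L/P$.

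There is no real obstacle here; both statements are immediate from the definition and are cited later only to transfer the inner-ideal property between a Lie algebra and its subalgebras or quotients (for instance, when passing from $L=A^{(k)}$ to $\bar L=L/N$, or when intersecting a Jordan-Lie inner ideal with a Levi complement).
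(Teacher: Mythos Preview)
Your proof is correct. The paper does not actually prove this lemma: it introduces it with the phrase ``we note the following standard properties of inner ideals'' and gives no argument, so there is nothing to compare against. Your element-chasing verification of both parts is exactly the routine check one would expect, and it supplies the details the paper omits.
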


Recall that idempotents $e$ and $f$ are said to be \emph{orthogonal}
if $ef=fe=0$.
\begin{lem}
\label{lem:eAf cap Z =00003D 0} Let $A$ be a ring and let $Z(A)$
be the center of $A$. Let $e$ and $f$ be idempotents in $A$ such
that $fe=0$. Then 

(i) $eAf\cap Z(A)=0$;

(ii) $B=eAf\cap A^{(k)}$ is a Jordan-Lie inner ideal of $A^{(k)}$
for all $k\ge0$;

(iii) $eAf$ is a Jordan-Lie inner ideal of $A^{(-)}$ and of $A^{(1)}$;

(iv) $eAf=eAg$ where $g=f-ef$ is an idempotent of $A$ orthogonal
to $e$.
\end{lem}

\begin{proof}
(i) Let $z\in eAf\cap Z(A)$. Then $z=eaf$ for some $a\in A$. Since
$z\in Z(A)$, we have $0=[e,z]=[e,eaf]=eaf=z$. Therefore, $eAf\cap Z(A)=0$.

(ii) We have $B^{2}\subseteq eAfeAf=0$ and $[B,[B,A^{(k)}]\subseteq$
$BA^{(k)}B\cap A^{(k)}\subseteq eAf\cap A^{(k)}=B$, as required. 

(iii) This follows from (ii) as $eAf=[e,eAf]\subseteq[A,A]$. 

(iv) We have $g^{2}=(f-ef)^{2}=f^{2}-eff=f-ef=g$, so $g$ is an idempotent
in $A$. Since $ge=(f-ef)e=0$ and $eg=e(f-ef)=ef-ef=0$, $e$ and
$g$ are orthogonal. It remains to show that $eAf=eAg$. We have $eAg=eA(f-ef)\subseteq eAf$
and $eAf=eAf(f-ef)=eAfg\subseteq eAg$, as required.
\end{proof}

\section{Idempotent pairs }

\label{sec:Idempotent-pairs-and} The aim of this section is to prove
Theorem \ref{thm:eAf sub e'Af' imply ...}, which describes the poset
of Jordan-Lie inner ideals generated by idempotents. We start by recalling
some well known relations on the sets of idempotents.
\begin{defn}
\label{def:e<e'} Let $A$ be a ring and let $e$ and $e'$ be idempotents
in $A$. Then

(1) $e$ is said to be \emph{left dominated by} $e'$, written $e\overset{\ccL}{\le}e'$,
if $e'e=e$.

(2) $e$ is said to be \emph{right dominated by} $e'$, written $e\overset{\ccR}{\le}e'$,
if $ee'=e$.

(3) $e$ is said to be \emph{dominated by} $e'$, written $e\le e'$,
if $e$ is a left and right dominated by $e'$, that is, if $e\overset{\ccL}{\le}e'$
and $e\overset{\ccR}{\le}e'$, or equivalently, $ee'=e'e=e$.

(4) Two idempotents $e$ and $e'$ are called \emph{left equivalent},
written $e\overset{\ccL}{\sim}e'$, if $e\overset{\ccL}{\le}e'$ and
$e'\overset{\ccL}{\le}e$.

(5) Two idempotents $e$ and $e'$ are called \emph{right equivalent},
written $e\overset{\ccR}{\sim}e'$, if $e\overset{\ccR}{\le}e'$ and
$e'\overset{\ccR}{\le}e$.
\end{defn}

\begin{rem}
\label{remIdemp} (1) It is easy to see that $\overset{\ccL}{\le}$
and $\overset{\ccR}{\le}$ are \emph{preorder} relations, $\le$ is
a \emph{partial order} and $\overset{\ccL}{\sim}$ and $\overset{\ccR}{\sim}$
are \emph{equivalences. }Note that if $A$ is Artinian, then the set
of all idempotents satisfies the descending chain condition with respect
to the partial order $\le$. 

(2) If $e$ and $e'$ are idempotents in $A$, then it is easy to
check that $e\le e'$ if and only if $e'=e+e_{1}$ for some idempotent
$e_{1}$ in $A$ with $e_{1}e=ee_{1}=0$. 
\end{rem}

The following lemma is well-known.
\begin{lem}
\label{lem:A ring i) ii) iii)} Let $A$ be a ring and let $e$ and
$e'$ be idempotents in $A$. Then 

(i) $e\overset{\ccL}{\le}e'$ if and only if $eA\subseteq e'A$.

(ii) $e\overset{\ccL}{\sim}e'$ if and only if $eA=e'A$. 

(iii) If $e\overset{\ccL}{\le}e'$, then there is an idempotent $e''$
in $A$ such that $e''\le e'$ and $e''\overset{\ccL}{\sim}e$.
\end{lem}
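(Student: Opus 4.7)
The plan is to reduce parts (i) and (ii) to elementary right-ideal manipulations and to handle (iii) by producing an explicit idempotent. The main preliminary observation, needed because $A$ is not assumed unital, is that $e = e^{2}\in eA$ and $e' = (e')^{2}\in e'A$, so the generators lie in their own ``principal'' subsets.

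For (i) the forward implication is immediate: if $e'e=e$, then for every $a\in A$ one has $ea = (e'e)a = e'(ea)\in e'A$, whence $eA\subseteq e'A$. Conversely, from $e\in eA\subseteq e'A$ I would pick $a\in A$ with $e=e'a$; then $e'e = (e')^{2}a = e'a = e$, giving $e\overset{\ccL}{\le}e'$. Part (ii) is then immediate by applying (i) in both directions.

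The substantive content is (iii), and here I would simply take
\[
e'' := ee'.
\]
Using only $e^{2}=e$, $(e')^{2}=e'$ and the hypothesis $e'e=e$, the four identities to verify---that $e''$ is an idempotent, that $e'e''=e''e'=e''$ (so $e''\le e'$), and that $e''e=e$ and $ee''=e''$ (so $e\overset{\ccL}{\sim}e''$)---each reduce to a one-line manipulation. For instance, $(ee')^{2}=e(e'e)e'=e^{2}e'=ee'$ gives idempotency, and $(ee')e = e(e'e) = e^{2} = e$ together with $e(ee') = e^{2}e' = ee' = e''$ gives the left equivalence with $e$; the two conditions $e'\cdot ee' = (e'e)e' = ee'$ and $ee'\cdot e' = e(e')^{2}=ee'$ give $e''\le e'$. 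The argument is essentially formal bookkeeping, so there is no serious obstacle; the only subtlety is the non-unital setting, handled everywhere by replacing ``$e=e\cdot 1$'' with ``$e=e^{2}$''.
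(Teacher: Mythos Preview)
Your proof is correct and coincides with the paper's argument: parts (i) and (ii) are handled the same way, and for (iii) the paper also takes $e'' = ee'$ (written there as $e'ee'$, which equals $ee'$ under the hypothesis $e'e=e$) and verifies the same four identities by the same one-line manipulations.
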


\begin{proof}
(i) Since $e\overset{\ccL}{\le}e'$, we have $eA=e'eA\subseteq e'A$.
On the other hand, if $eA\subseteq e'A$, then $e=ee\in e'A$, so
$e'e=e$, as required.

(ii) This follows from (i). 

(iii) Put $e''=e'ee'=ee'$. Then $e''^{2}=ee'ee'=eee'=ee'=e''$, so
$e''$ is an idempotent. Since $e'e''=e'(e'ee')=e'ee'=e''$ and $e''e'=(e'ee')e'=e'ee'=e''$,
we have $e''\le e'$. It remains to note that $e''e=(ee')e=e(e'e)=ee=e$
and $ee''=e(ee')=ee'=e''$, so $e\overset{\ccL}{\sim}e''$, as required. 
\end{proof}
We say that $(e,f)$ is an \emph{idempotent pair} in $A$ if both
$e$ and $f$ are idempotents in $A$. Moreover, $(e,f)$ is \emph{orthogonal}
if $ef=fe=0$. 
\begin{defn}
\label{def:idem pair} Let $A$ be a ring and let $e$, $e'$, $f$
and $f'$ be idempotents in $A$. We say that

(1) $(e,f)$ is \emph{left-right dominated by} $(e',f')$, written
$(e,f)\overset{\ccL\ccR}{\le}(e',f')$, if $e\overset{\ccL}{\le}e'$
and $f\overset{\ccR}{\le}f'$.

(2) $(e,f)$ is\emph{ dominated by} $(e',f')$, written $(e,f)\le(e',f')$,
if $e\le e'$ and $f\le f'$.

(3) $(e,f)$ and $(e',f')$ are\emph{ left-right equivalent}, written
$(e,f)\overset{\ccL\ccR}{\sim}(e',f')$, if $(e,f)\overset{\ccL\ccR}{\le}(e',f')$
and $(e',f')\overset{\ccL\ccR}{\le}(e,f)$.
\end{defn}

Using Remark \ref{remIdemp}, we get the following. 
\begin{rem}
\label{rem:LRpreorder} (1) The relation $\overset{\ccL\ccR}{\le}$
is a preorder and $\overset{\ccL\ccR}{\sim}$ is an equivalence\emph{.
}As usual, the preorder $\overset{\ccL\ccR}{\le}$ induces a partial
order on the quotient set of idempotent pairs by the equivalence $\overset{\ccL\ccR}{\sim}$. 

(2) The relation $\le$ is a partial order. If $A$ is Artinian, then
the set of all idempotent pairs satisfies the descending chain condition
with respect to $\le$. 

(3) $(e,f)\le(e',f')$ if and only if $e'=e+e_{1}$ and $f'=f+f_{1}$
for some idempotents $e_{1}$ and $f_{1}$ in $A$ with $e$ and $e_{1}$
(resp. $f$ and $f_{1}$) orthogonal.
\end{rem}

\begin{lem}
\label{lem:(e,f) LR (e',f')} Let $A$ be a ring. Let $(e,f)$ and
$(e',f')$ be idempotent pairs in $A$ with $(e,f)\overset{\ccL\ccR}{\le}(e',f')$.
Then there is an idempotent pair $(e'',f'')$ in $A$ such that $(e'',f'')\le(e',f')$
and $(e'',f'')\overset{\ccL\ccR}{\sim}(e,f)$.
\end{lem}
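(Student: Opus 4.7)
The plan is to reduce the claim to Lemma \ref{lem:A ring i) ii) iii)}(iii) applied componentwise. By definition, $(e,f)\overset{\ccL\ccR}{\le}(e',f')$ means $e\overset{\ccL}{\le}e'$ and $f\overset{\ccR}{\le}f'$, so the two coordinates can be handled independently since the construction of a new idempotent from the pair $(e,e')$ makes no reference to $f,f'$ and vice versa.

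First I would invoke Lemma \ref{lem:A ring i) ii) iii)}(iii) directly on the pair $(e,e')$: this produces an idempotent $e''$ with $e''\le e'$ and $e''\overset{\ccL}{\sim}e$; explicitly, $e''=ee'$. Next I would establish the right-handed analogue of Lemma \ref{lem:A ring i) ii) iii)}(iii): if $f\overset{\ccR}{\le}f'$ (i.e.\ $ff'=f$), then $f'':=f'f$ is an idempotent satisfying $f''\le f'$ and $f''\overset{\ccR}{\sim}f$. This is a short direct computation mirroring the proof of Lemma \ref{lem:A ring i) ii) iii)}(iii): one checks $(f'f)(f'f)=f'(ff')f=f'ff=f'f$ for idempotency, $f'\cdot f'f=f'f$ and $f'f\cdot f'=f'(ff')=f'f$ for $f''\le f'$, and $f\cdot f'f=(ff')f=f$ together with $f'f\cdot f=f'f$ for $f''\overset{\ccR}{\sim}f$. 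Alternatively, this can be phrased as the statement obtained from Lemma \ref{lem:A ring i) ii) iii)}(iii) by passing to the opposite ring $A^{\mathrm{op}}$, which swaps left- and right-domination.

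Finally I would combine the two constructions: the pair $(e'',f'')=(ee',\,f'f)$ satisfies $(e'',f'')\le(e',f')$ by Definition \ref{def:idem pair}(2) since $e''\le e'$ and $f''\le f'$, and it satisfies $(e'',f'')\overset{\ccL\ccR}{\sim}(e,f)$ by Definition \ref{def:idem pair}(3) since $e''\overset{\ccL}{\sim}e$ and $f''\overset{\ccR}{\sim}f$ give both $(e'',f'')\overset{\ccL\ccR}{\le}(e,f)$ and $(e,f)\overset{\ccL\ccR}{\le}(e'',f'')$.

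There is really no main obstacle here; the content is entirely in Lemma \ref{lem:A ring i) ii) iii)}(iii). The only thing to watch is the asymmetry in the hypothesis (left-domination on the first coordinate, right-domination on the second), which is precisely why a separate but completely dual verification of the right-handed version is needed before assembling the pair.
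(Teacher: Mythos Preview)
Your proposal is correct and takes essentially the same approach as the paper, which simply states that the lemma follows from Lemma~\ref{lem:A ring i) ii) iii)}(iii). You have merely unpacked this reference by treating the two coordinates separately and spelling out the right-handed analogue (equivalently, applying Lemma~\ref{lem:A ring i) ii) iii)}(iii) in $A^{\mathrm{op}}$ for the $f$-coordinate), which is exactly what the paper's one-line proof implicitly relies on.
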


\begin{proof}
This follows from Lemma \ref{lem:A ring i) ii) iii)}(iii). 
\end{proof}
\begin{prop}
\label{e'e=00003De Simple} Let $A$ be a simple ring and let $e$,
$e'$, $f$ and $f'$ be non-zero idempotents in $A$. Then we have
the following. 

(i) $eAf\ne0$. 

(ii) $eAf\subseteq e'Af'$ if and only if $(e,f)\overset{\ccL\ccR}{\le}(e',f')$. 

(iii) $eAf=e'Af'$ if and only if $(e,f)\overset{\ccL\ccR}{\sim}(e',f')$. 
\end{prop}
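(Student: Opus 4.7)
The plan is to exploit simplicity of $A$ via the identities $AeA = A$ and $AfA = A$ (which hold whenever $e,f$ are nonzero idempotents in a simple ring, since $AeA$ and $AfA$ are nonzero two-sided ideals), combined with the trick that any idempotent $e$ sits in $eA$ as $e = e\cdot e$, so for $x \in A$ the vanishing of $xA$ implies the vanishing of $x$ whenever $x = xe$ or $x = ex$ for the relevant idempotent. This is the mechanism that replaces the use of a unit.

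For (i), suppose $eAf = 0$. Then $e(AfA) = 0$; by simplicity $AfA = A$, so $eA = 0$, forcing $e = e^2 \in eA = 0$, a contradiction. For the ``if'' direction of (ii), assume $e'e = e$ and $ff' = f$; then any $eaf = (e'e)a(ff') = e'(eaf)f' \in e'Af'$. For the ``only if'' direction, take any $x \in A$ and write $exf = e'yf'$ for some $y \in A$; left-multiplying by $e'$ and right-multiplying by $f'$ gives $e'(exf) = exf = (exf)f'$, hence
\begin{equation*}
(e - e'e)\,A\,f \;=\; 0 \qquad\text{and}\qquad e\,A\,(f - ff') \;=\; 0.
\end{equation*}
From the first, $(e - e'e)(AfA) = 0$, and simplicity gives $AfA = A$, so $(e - e'e)A = 0$. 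Right-multiplying by $e$ yields $e - e'e = (e - e'e)e = 0$, i.e.\ $e'e = e$. The second identity is handled symmetrically (left-multiply the conclusion $A(f - ff') = 0$ by $f$) to give $ff' = f$. Thus $(e,f) \overset{\ccL\ccR}{\le} (e',f')$.

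Part (iii) is then immediate: applying (ii) in both directions shows $eAf = e'Af'$ iff $(e,f) \overset{\ccL\ccR}{\le} (e',f')$ and $(e',f') \overset{\ccL\ccR}{\le} (e,f)$, which is the definition of $(e,f) \overset{\ccL\ccR}{\sim} (e',f')$.

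I do not expect serious obstacles here; the only subtle point is that $A$ may be non-unital, so one must be careful not to invoke $1 \in A$. The proof sidesteps this uniformly by passing through the ideals $AeA, AfA$ and then absorbing on one side using the relation $e = e^2$ (respectively $f = f^2$). Once this bookkeeping is in place, the three statements fall out of the same short calculation, and (iii) is a formal consequence of (ii).
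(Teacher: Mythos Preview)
Your proof is correct and follows essentially the same approach as the paper: both use simplicity via the identities $AeA = A$ and $AfA = A$, establish $(e'e - e)Af = 0$ from the inclusion $eAf \subseteq e'Af'$, and then invoke simplicity to kill $e'e - e$. The only cosmetic difference is that the paper phrases the last step in terms of the left annihilator of $Af$ being a two-sided ideal (hence $0$), whereas you multiply through by $AfA = A$ on the right and absorb with $e = e^2$; your part~(i) is in fact slightly cleaner than the paper's, since it avoids the appeal to $A^2 \ne 0$.
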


\begin{proof}
(i) Note that $AeA$ is a two-sided ideal of $A$ containing $e$.
Since $A$ is simple, $AeA=A$. Similarly, $AfA=A$. If $eAf=0$ then
$A^{2}=AeAAfA=AeAfA=0$, which is a contradiction.

(ii) Suppose first that $eAf\subseteq e'Af'$. Then $e'eaf=eaf$ for
all $a\in A$, so $(e'e-e)af=0$ for all $a\in A$. Hence, $e'e-e$
belongs to the left annihilator $H$ of $Af$ in $A$. Note that $H$
is a two-sided ideal of $A$. Since $A$ is simple, we have $H=A$
or $0$. As $f\not\in H$ (because $f(ff)=f\ne0$), $H=0$, so $e'e-e=0$,
or $e'e=e$. Hence, $e\overset{\ccL}{\le}e'$. Similarly, we obtain
$f\overset{\ccR}{\le}f'$. Therefore, $(e,f)\overset{\ccL\ccR}{\le}(e',f')$.
Suppose now that $(e,f)\overset{\ccL\ccR}{\le}(e',f')$. Then $e'e=e$
and $ff'=f$, so $eAf=e'eAff'\subseteq e'Af'$, as required. 

(iii) This follows from (ii).
\end{proof}
\begin{defn}
\label{def:(e,f) non-degen} (1) Let $A$ be a semisimple Artinian
ring and let $\{S_{i}\mid i\in I\}$ be the set of its simple components.
Let $e$ and $f$ be non-zero idempotents in $A$ and let $e_{i}$
(resp. $f_{i}$) be the projection of $e$ (resp. $f$) to $S_{i}$
for each $i\in I$. Then the pair $(e,f)$ is said to be\emph{ strict}
if for each $i\in I$, $e_{i}$ and $f_{i}$ are either both non-zero
or both zero.

(2) Let $A$ be an Artinian ring or a finite dimensional algebra and
let $R$ be its radical. Let $e$ and $f$ be non-zero idempotents
in $A$. We say that $(e,f)$ is \emph{strict} if $(\bar{e},\bar{f})$
is strict in $\bar{A}=A/R$. 
\end{defn}

The following lemma follows directly from the definition and Proposition
\ref{e'e=00003De Simple}(i). 
\begin{lem}
\label{prop:eAf non 0 semi} Let $A$ be a semisimple Artinian ring
and let $(e,f)$ be a non-zero strict idempotent pair in $A$. Then
$eAf\ne0$. 
\end{lem}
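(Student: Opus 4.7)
The plan is to reduce to the simple-component case and then invoke Proposition \ref{e'e=00003De Simple}(i).

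First, I would use the structure of semisimple Artinian rings: write $A=\bigoplus_{i\in I}S_{i}$ with each $S_{i}$ a simple component. Let $e_{i}$ and $f_{i}$ denote the projections of $e$ and $f$ to $S_{i}$, so that $e=\sum_{i}e_{i}$ and $f=\sum_{i}f_{i}$, with $e_{i},f_{i}$ idempotents in $S_{i}$ and all cross products vanishing.

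Next, I would use strictness and non-vanishing of the pair to locate a useful index. Since $(e,f)\ne(0,0)$, some $e_{j}$ or $f_{j}$ must be non-zero; by Definition \ref{def:(e,f) non-degen}(1) (strictness), in that component both $e_{j}$ and $f_{j}$ are non-zero.

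Then I would apply Proposition \ref{e'e=00003De Simple}(i) in the simple ring $S_{j}$ to the non-zero idempotents $e_{j}$ and $f_{j}$, giving $e_{j}S_{j}f_{j}\ne0$. Finally, since the components are mutually annihilating, for any $a\in S_{j}\subseteq A$ one has $eaf=e_{j}af_{j}$, so $e_{j}S_{j}f_{j}\subseteq eAf$, which forces $eAf\ne0$.

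There is no real obstacle here; the only thing to be careful about is making sure the Peirce-type identity $eaf=e_{j}af_{j}$ for $a\in S_{j}$ is explicitly invoked, so that the non-zero element found inside $e_{j}S_{j}f_{j}$ is genuinely exhibited as an element of $eAf$. Once this is stated, the result is immediate.
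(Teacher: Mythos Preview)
Your proposal is correct and follows exactly the approach indicated in the paper, which simply states that the lemma follows directly from Definition \ref{def:(e,f) non-degen} and Proposition \ref{e'e=00003De Simple}(i). You have merely spelled out the straightforward details of this reduction to simple components.
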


Now, we are ready to prove Theorem \ref{thm:eAf sub e'Af' imply ...}. 
\begin{proof}[Proof of Theorem \ref{thm:eAf sub e'Af' imply ...}]
 Recall that $(e,f)$ and $(e',f')$ are idempotent pairs in $A$
with $(e,f)$ being strict. 

(i) By Definition \ref{def:(e,f) non-degen} (2), $(\bar{e},\bar{f})$
is a strict idempotent pair in $\bar{A}$, so by Proposition \ref{prop:eAf non 0 semi},
$\bar{e}\bar{A}\bar{f}\ne0$. Therefore, $eAf\ne0$, as required. 

(ii) We need to show that $eAf\subseteq e'Af'$ if and only if $(e,f)\overset{\ccL\ccR}{\le}(e',f')$.
If $(e,f)\overset{\ccL\ccR}{\le}(e',f')$, then $eAf=e'eAff'\subseteq e'Af'$,
as required. 

Suppose now that $eAf\subseteq e'Af'$. We need only to check that
$e\overset{\ccL}{\le}e'$ (the proof for $f\overset{\ccR}{\le}f'$
is similar). Assume to the contrary that $e'e\ne e$. Then $r=e'e-e\ne0$.
Fix minimal $n\ge1$ such that $r\notin R^{n}$. By taking quotient
of $A$ by $R^{n}$ we can assume that $R^{n}=0$ and $r\in M$ where
$M=R^{n-1}$ if $n>1$ and $M=A$ (with $A$ being semisimple) if
$n=1$. Since $MR\subseteq R^{n}=0$, the right $A$-module $M$ is
actually an $\bar{A}$-module. Note that $re=(e'e-e)e=e'e-e=r$, so
$r\bar{e}=r\ne0$. Let $\{S_{i}\mid i\in I\}$ be the set of the simple
components of $\bar{A}$ and let $\bar{e}_{i}$ be the projection
of $\bar{e}$ to $S_{i}$. Since $r\bar{e}\ne0$, there is $i\in I$
such that $r\bar{e}_{i}\ne0$, so $r\bar{e}_{i}S_{i}$ is a non-zero
unital right $S_{i}$-submodule of $M$. Moreover, it is isomorphic
to a direct sum of copies of the natural $S_{i}$-module. Since $\bar{e}_{i}\ne0$
and $(e,f)$ is strict, $\bar{f}_{i}\ne0$, so $r\bar{e}S_{i}\bar{f}=r\bar{e}_{i}S_{i}\bar{f}_{i}\ne0$.
In particular, there is $a\in A$ such that $r\bar{e}\bar{a}\bar{f}\ne0$.
As $r=e'e-e$, we have that $(e'e-e)\bar{e}\bar{a}\bar{f}\ne0$, or
equivalently, $e'x\ne x$ where $x=e\bar{e}\bar{a}\bar{f}=eaf$. On
the other hand, $x\in eAf\subseteq e'Af'$, so $e'x=x$, a contradiction.
Therefore, $e\overset{\ccL}{\le}e'$, as required. 

(iii) This follows from (ii).

(iv) This follows from (ii), (iii) and Remark \ref{rem:LRpreorder}(1).

(v) This follows from (iii) and Lemma \ref{lem:(e,f) LR (e',f')}.
\end{proof}

\section{Jordan-Lie inner ideals of semisimple algebras}

Recall that $A$ is a finite dimensional associative algebra over
$\bbF$ (unless otherwise stated). If $A$ is simple then $A$ can
be identified with $\End V$ for some finite dimensional vector space
$V$ over $\bbF$. By fixing a basis of $V$ we can represent the
algebra $\End V$ in the matrix form $\ccM_{n}$, where $n=\dim V$.
We say that $\ccM_{n}$ is a \emph{matrix} \emph{realization} of $A$.
Recall that every idempotent of $\ccM_{n}$ is diagonalizable (as
its minimal polynomial is a divisor of $t^{2}-t$). Since orthogonal
idempotents commute, we get the following. 
\begin{lem}
\label{lem:e=00003Ddiag and f=00003Ddiag} Let $(e,f)$ be an orthogonal
idempotent pair in $A$. Suppose $A$ is simple. Then there is a matrix
realization of $A$ such that $e$ and $f$ can be represented by
the diagonal matrices $e=diag(1,\dots,1,0,\ldots,0)\text{ and }f=diag(0,\ldots,0,1,\dots,1)$
with $\rank(e)+\rank(f)\leq n$.
\end{lem}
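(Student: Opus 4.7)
The plan is to reduce to a simultaneous diagonalization statement, then reorder basis vectors. Since $A$ is simple and finite dimensional over the algebraically closed field $\bbF$, we may identify $A$ with $\End_{\bbF}V$ for some finite dimensional vector space $V$, and any choice of basis of $V$ then yields a matrix realization $A\cong M_n(\bbF)$ with $n=\dim V$. Picking the right basis of $V$ is therefore all that is required.

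First I would observe that since $ef=fe=0$, the idempotents $e$ and $f$ commute, i.e.\ $[e,f]=0$. Each of $e$ and $f$ is annihilated by $t^{2}-t$, which splits into distinct linear factors in $\bbF[t]$, so each is diagonalizable on $V$. Two commuting diagonalizable operators are simultaneously diagonalizable, so there is a basis of $V$ consisting of common eigenvectors of $e$ and $f$.

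Next I would decompose $V$ according to the joint eigenvalues. Let $V_{\alpha,\beta}$ be the simultaneous eigenspace for $(e,f)$ with eigenvalues $(\alpha,\beta)\in\{0,1\}^{2}$. Orthogonality forces $V_{1,1}=0$: if $v\in V_{1,1}$, then $v=ev=e(fv)=(ef)v=0$. Hence $V=V_{1,0}\oplus V_{0,1}\oplus V_{0,0}$, with $\dim V_{1,0}=\rank(e)$, $\dim V_{0,1}=\rank(f)$, and therefore $\rank(e)+\rank(f)\le n$.

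Finally I would order the chosen basis so that the first $\rank(e)$ vectors lie in $V_{1,0}$, the next $\rank(f)$ vectors lie in $V_{0,1}$, and the remaining ones lie in $V_{0,0}$. In the resulting matrix realization of $A$, the operators $e$ and $f$ are represented by $\mathrm{diag}(1,\ldots,1,0,\ldots,0)$ and $\mathrm{diag}(0,\ldots,0,1,\ldots,1,0,\ldots,0)$ respectively, as claimed. There is essentially no obstacle here: everything follows from the standard simultaneous diagonalization of commuting semisimple operators, with only the short verification that $V_{1,1}=0$ being needed to guarantee the disjoint placement of the blocks of $1$'s.
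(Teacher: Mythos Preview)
Your argument is essentially the same as the paper's (which simply notes that idempotents are diagonalizable and orthogonal idempotents commute), and the key observation $V_{1,1}=0$ is correct. There is, however, a small slip in the final ordering step: you place the basis vectors in the order $V_{1,0},\,V_{0,1},\,V_{0,0}$, which yields
\[
f=\mathrm{diag}(0,\ldots,0,1,\ldots,1,0,\ldots,0),
\]
not the form $f=\mathrm{diag}(0,\ldots,0,1,\ldots,1)$ asserted in the lemma. To obtain the stated form, order the basis as $V_{1,0},\,V_{0,0},\,V_{0,1}$ instead; then the $1$'s of $e$ sit at the top and the $1$'s of $f$ sit at the bottom, as required.
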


Benkart proved that if A is a simple Artinian ring of characteristic
not 2 or 3, then every proper inner ideal of $[A,A]/(Z(A)\cap[A,A])$
is induced by idempotents \cite[Theorem 5.1]{Benkart-1}. We will
need a slight modification of this result.
\begin{thm}
\label{thm:B=00003DeAf if A is simple Artinian} Let $A$ be a simple
Artinian ring of characteristic not $2$ or $3$. Let $B$ be Jordan-Lie
inner ideal of $[A,A]$. Then there exists orthogonal idempotent pair
$(e,f)$ in $A$ such that $B=eAf$. 
\end{thm}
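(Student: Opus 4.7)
The plan is to project $B$ into the central quotient $\tilde{L}:=[A,A]/Z_{0}$, where $Z_{0}:=Z(A)\cap[A,A]$, apply Benkart's theorem there, and then lift the resulting description back to $A$ using the tight structure provided by $B^{2}=0$. Since $Z_{0}$ is an ideal of $[A,A]$, Lemma~\ref{lem:Bav=000026Row 2.16}(ii) shows that the image $\pi(B)$ of $B$ in $\tilde{L}$ is an inner ideal. By \cite[Theorem~5.1]{Benkart-1} (which, in characteristic $\ne 2,3$, describes every inner ideal of $\tilde{L}$ as generated by a pair of idempotents of $A$), there exist idempotents $e,f\in A$ with $\pi(B)=\pi(eAf)$. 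After replacing $f$ by $f-ef$ via Lemma~\ref{lem:eAf cap Z =00003D 0}(iv), we may further assume that $(e,f)$ is orthogonal; the equality $\pi(B)=\pi(eAf)$ then translates into $B+Z_{0}=eAf+Z_{0}$.

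To show $B\subseteq eAf$, take $x\in B$ and decompose $x=y+z$ with $y\in eAf$ and $z\in Z_{0}$. Since $fe=0$, the product $y^{2}\in eAfeAf=eA(fe)Af$ vanishes, and $z$ commutes with $y$, so
\[
0=x^{2}=y^{2}+yz+zy+z^{2}=2yz+z^{2}=z(2y+z),
\]
where we used $p\ne 2$. Suppose for contradiction that $z\ne 0$. As $A$ is simple Artinian, $Z(A)$ is a field, so $z$ is invertible in $Z(A)$ and $2y+z=0$, giving $y=-z/2\in Z(A)$. But $y\in eAf$ as well, and $eAf\cap Z(A)=0$ by Lemma~\ref{lem:eAf cap Z =00003D 0}(i), so $y=0$ and hence $z=0$, a contradiction. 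Therefore $z=0$ and $x=y\in eAf$.

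For the reverse inclusion, let $y\in eAf$. Since $eAf+Z_{0}=B+Z_{0}$, we can write $y=b+z$ for some $b\in B$ and $z\in Z_{0}$. The preceding paragraph gives $b\in eAf$, whence $z=y-b\in eAf\cap Z(A)=0$ by Lemma~\ref{lem:eAf cap Z =00003D 0}(i) again. Thus $y=b\in B$, establishing $B=eAf$. The main obstacle is the central-component argument: one must combine the Jordan-Lie condition $B^{2}=0$ with the orthogonality $fe=0$ and the fact that $Z(A)$ is a field to rule out a nontrivial central summand in elements of $B$, since Benkart's theorem alone controls $B$ only modulo $Z_{0}$.
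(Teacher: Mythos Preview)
Your proof is correct and follows essentially the same approach as the paper: project $B$ to $[A,A]/(Z(A)\cap[A,A])$, invoke Benkart's theorem to obtain idempotents $e,f$ with $fe=0$, and then use $B^{2}=0$ together with $eAf\cap Z(A)=0$ to eliminate any central component and conclude $B=eAf$, finishing with Lemma~\ref{lem:eAf cap Z =00003D 0}(iv) for orthogonality. The only cosmetic differences are that the paper postpones the orthogonalization to the end and, for the central-component step, observes directly that $z^{2}=-2e(az)f\in eAf\cap Z(A)=0$ (hence $z=0$ as $Z(A)$ is a field) rather than factoring $z(2y+z)=0$ and using invertibility of $z$.
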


\begin{proof}
Let $Z$ be the center of $A$ and let $\hat{B}$ be the image of
$B$ in $\hat{A}=[A,A]/(Z\cap[A,A])$. Then $\hat{B}$ is a proper
inner ideal of $\hat{A}$ and by \cite[Theorem 5.1]{Benkart-1}, there
are idempotents $e$ and $f$ in $A$ with $fe=0$ such that $\hat{B}$
is the image of $eAf$ in $\hat{A}$. We wish to show that $B=eAf$.
Let $b\in B$. Then $b=eaf+z$ for some $a\in A$ and $z\in Z$. As
$B^{2}=0$ (because $B$ is Jordan-Lie), 
\[
0=b^{2}=(eaf+z)(eaf+z)=e(2az)f+z^{2}.
\]
Hence, by Lemma \ref{lem:eAf cap Z =00003D 0}(i), we obtain $z^{2}=e(-2az)f\in eAf\cap Z(A)=0$.
Therefore, $z=0$ and $B\subseteq eAf$. Conversely, let $a\in A$.
Then there is $z\in Z$ such that $eaf+z\in B$. As above, we obtain
$z=0$. Therefore, $eaf\in B$, so $B=eAf$. Since $fe=0$, by Lemma
\ref{lem:eAf cap Z =00003D 0}(iv), there is an idempotent $g$ in
$A$ such that $g$ and $e$ are orthogonal and $B=eAf=eAg$. 
\end{proof}
\begin{lem}
\label{lem:X=00003Dspan=00007Be:1<i<k<l<j<n=00007D} Let $B$ be a
Jordan-Lie inner ideal of $L=[A,A]$. Suppose $A$ is simple and $p\neq2,3$.
Then there is a matrix realization $\ccM_{n}$ of $A$ and integers
$1\leq k<l\leq n$ such that $B=span\left\{ e_{st}\mid1\leq s\leq k<l\leq t\leq n\right\} $,
where $e_{st}$ are matrix units. 
\end{lem}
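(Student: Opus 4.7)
The plan is to combine Theorem \ref{thm:B=00003DeAf if A is simple Artinian} with Lemma \ref{lem:e=00003Ddiag and f=00003Ddiag}, both of which were set up precisely for this purpose. All the hard work (Benkart's result, diagonalization of commuting idempotents) has already been done; the present lemma just has to read off the answer in a chosen basis.

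First, I would apply Theorem \ref{thm:B=00003DeAf if A is simple Artinian} to the Jordan-Lie inner ideal $B$ of $L=[A,A]$, which is valid because $A$ is simple Artinian and $p\ne2,3$. This produces an orthogonal idempotent pair $(e,f)$ in $A$ (so $ef=fe=0$) with $B=eAf$. Second, I would apply Lemma \ref{lem:e=00003Ddiag and f=00003Ddiag} to the orthogonal pair $(e,f)$ to obtain a matrix realization $A\cong M_{n}(\bbF)$ in which
\[
e=\operatorname{diag}(\underbrace{1,\ldots,1}_{k},0,\ldots,0),\qquad f=\operatorname{diag}(0,\ldots,0,\underbrace{1,\ldots,1}_{n-l+1}),
\]
where $k=\rank(e)$ and $l=n-\rank(f)+1$. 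The inequality $\rank(e)+\rank(f)\le n$ from Lemma \ref{lem:e=00003Ddiag and f=00003Ddiag} becomes $k<l$ (so in particular $1\le k<l\le n$).

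The final step is the routine calculation $eAf=\operatorname{span}\{e_{st}\mid 1\le s\le k<l\le t\le n\}$. Indeed, for $X=(x_{ij})\in M_{n}(\bbF)$ the product $eXf$ is the matrix whose $(i,j)$-entry equals $x_{ij}$ when $1\le i\le k$ and $l\le j\le n$ and is zero otherwise; as $X$ ranges over $A=M_n(\bbF)$ the entries $x_{ij}$ in that corner range independently over $\bbF$, so $eAf$ is exactly the span described.

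The only slight obstacle is a possible degenerate case: if $B=0$ then Theorem \ref{thm:B=00003DeAf if A is simple Artinian} may return a pair with $e=0$ or $f=0$, and the statement as written forces $1\le k<l\le n$. This is not a real difficulty: either one reads the lemma as pertaining to the nontrivial case $B\ne0$ (so $e,f\ne0$ and $k\ge1$, $l\le n$ automatically), or one notes that if $B=0$ one may freely pick any $1\le k<l\le n$ after first replacing $(e,f)$ by a zero pair supported on disjoint coordinate blocks. No genuine obstruction arises.
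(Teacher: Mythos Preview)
Your proof is correct and follows exactly the paper's approach: the paper's own proof is the single line ``This follows from Theorem \ref{thm:B=00003DeAf if A is simple Artinian} and Lemma \ref{lem:e=00003Ddiag and f=00003Ddiag}'', and you have simply unpacked that citation into the explicit computation. (One minor point: your second suggested fix for the $B=0$ case does not work, since for any $1\le k<l\le n$ the displayed span of matrix units is nonzero; but your first reading---that the lemma is tacitly stated for $B\ne 0$---is the intended one, and the paper only invokes it in that context.)
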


\begin{proof}
This follows from Theorem \ref{thm:B=00003DeAf if A is simple Artinian}
and Lemma \ref{lem:e=00003Ddiag and f=00003Ddiag}.
\end{proof}
Recall that every simple Artinian ring $A$ is Von\textbf{ }Neumann\textbf{
}regular, i.e. for every $x\in A$ there is $y\in A$ such that $x=xyx$
\cite{Goodearl}. 
\begin{lem}
\label{lem:B=00003D=00005BB,=00005BB,A=00005D=00005D if A is simple Artinian}
Let $A$ be a simple Artinian ring of characteristic not $2$ or $3$
and let $B$ be a Jordan-Lie inner ideal of $A^{(1)}$. Then $B=[B,[B,A^{(1)}]]$. 
\end{lem}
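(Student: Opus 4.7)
The plan is to use Theorem~\ref{thm:B=00003DeAf if A is simple Artinian} to write $B = eAf$ for some orthogonal idempotent pair $(e,f)$, i.e. with $ef = fe = 0$, and then to produce, for each $b \in B$, an element $y' \in A^{(1)}$ with $[b,[b,y']] = -2b$. Since $[B,[B,A^{(1)}]] \subseteq B$ is built into the definition of an inner ideal, only the inclusion $B \subseteq [B,[B,A^{(1)}]]$ needs proof.

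Given $b \in B$, I would first invoke Von Neumann regularity of the simple Artinian ring $A$ (recalled just above the statement) to obtain some $y \in A$ with $byb = b$. This $y$ need not lie in $A^{(1)}$, so I would replace it by $y' := fye$. Writing $b = eaf$ and using $e^2=e$, $f^2=f$, one checks by a direct computation that
\[
b y' b \;=\; (eaf)(fye)(eaf) \;=\; eafyeaf \;=\; byb \;=\; b.
\]
Moreover, the hypothesis $fe = 0$ yields $y'e = fye^2 = y'$ and $ey' = (ef)ye = 0$, whence $y' = [y',e] \in [A,A] = A^{(1)}$.

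Applying identity~(\ref{eq:bbx}) with $b' = b$ now gives
\[
[b,[b,y']] \;=\; -by'b - by'b \;=\; -2b,
\]
so $b = -\tfrac{1}{2}[b,[b,y']] \in [B,[B,A^{(1)}]]$; the factor $\tfrac{1}{2}$ is legitimate because $p \ne 2$. As this holds for every $b \in B$, we obtain $B \subseteq [B,[B,A^{(1)}]]$ and hence equality.

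The only delicate point is the substitution $y \mapsto y' = fye$, which simultaneously preserves the regularity identity $by'b = b$ (the extra factors of $e$ and $f$ are absorbed by $b$ on either side) and forces $y'$ into $A^{(1)}$ (through $ef = 0$, so that $y' = [y',e]$). Once this trick is in hand, the rest of the argument is routine.
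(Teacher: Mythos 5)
Your proposal is correct and follows essentially the same route as the paper: write $B=eAf$ via Theorem \ref{thm:B=00003DeAf if A is simple Artinian}, take a Von Neumann inverse $y$ of $b$, replace it by $fye$ (the paper writes $y=fxe=[f,fxe]$, you write $y'=[y',e]$ — the same element, with the same use of $ef=fe=0$ to land in $A^{(1)}$ and to preserve $by'b=b$), and conclude with $[b,[b,y']]=-2b$ and invertibility of $2$. No gaps.
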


\begin{proof}
We need only to show that $B\subseteq[B,[B,A^{(1)}]]$. Let $b\in B$.
By Theorem \ref{thm:B=00003DeAf if A is simple Artinian}, $B=eAf$
for some orthogonal idempotents $e$ and $f$ in $A$, so $b=eaf$
for some $a\in A$. Since $A$ is Von Neumann regular, $b=bxb$ for
some $x\in A$. Hence, $eaf=b=bxb=(eaf)x(eaf)$. Put $y=fxe=[f,fxe]\in A^{(1)}$.
Then $b=byb$, so $[b,[b,y]]=-2byb=-2b$. This implies $b\in[B,[B,A^{(1)}]]$,
as required. 
\end{proof}
Let $L$ be a finite dimensional semisimple Lie algebra and let $\{L_{i}\mid i\in I\}$
be the set of the simple components of $L$. If $B$ is an inner ideal
of $L$ and the ground field is of characteristic $p\neq2,3,5,7$
then $B=\bigoplus_{i\in I}B_{i}$, where $B_{i}=B\cap L_{i}$ (see
\cite[Proposition 2.3]{Lop=000026Gar=000026Loz-1}). As the following
lemma shows we need less restrictions on $p$ if $L=[A,A]$ and $B$
is Jordan-Lie. 
\begin{lem}
\label{lem:B=00003DB1+...} Suppose $A$ is semisimple and $p\neq2,3$.
Let $\{S_{i}\mid i\in I\}$ be the set of the simple components of
$A$ and let $B$ be a Jordan-Lie inner ideal of $L=[A,A]$. Then
$B=\bigoplus_{i\in I}B_{i}$, where $B_{i}=B\cap S_{i}$ is a Jordan-Lie
inner ideal of $L_{i}=[S_{i},S_{i}]$. 
\end{lem}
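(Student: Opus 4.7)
The plan is to extract each simple-component projection of an element $b \in B$ back into $B$ itself, by reducing to the simple Artinian case via Theorem \ref{thm:B=00003DeAf if A is simple Artinian} and then using Von Neumann regularity to produce, inside the Lie algebra $L$ itself, an element that picks off exactly one component of $b$.

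Since $A = \bigoplus_{i\in I} S_i$ with $S_i S_j = 0$ for $i \neq j$, we have $L = [A,A] = \bigoplus_{i\in I} [S_i, S_i]$. Let $\pi_i \colon L \to [S_i,S_i]$ be the projection and, for $b \in B$, write $b = \sum_i b_i$ with $b_i = \pi_i(b)$. The image $\pi_i(B)$ is an inner ideal of $S_i^{(1)} = [S_i,S_i]$ by Lemma \ref{lem:Bav=000026Row 2.16}(ii); moreover, for $b, b' \in B$ the identity $0 = bb' = \sum_j b_j b'_j$ (which splits along the simple components) forces $b_j b'_j = 0$ for each $j$, so $\pi_i(B)^2 = 0$ and $\pi_i(B)$ is Jordan-Lie.

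By Theorem \ref{thm:B=00003DeAf if A is simple Artinian} applied to the simple Artinian ring $S_i$, we have $\pi_i(B) = e_i S_i f_i$ for some orthogonal idempotent pair $(e_i, f_i)$ in $S_i$. Fix $b \in B$. For each $i$, Von Neumann regularity of $S_i$ gives $z_i \in S_i$ with $b_i = b_i z_i b_i$; set $y_i := f_i z_i e_i$. The orthogonality $e_i f_i = f_i e_i = 0$ yields $y_i = [f_i, f_i z_i e_i] \in [S_i, S_i] \subseteq L$ and $b_i y_i b_i = b_i$, exactly as in the proof of Lemma \ref{lem:B=00003D=00005BB,=00005BB,A=00005D=00005D if A is simple Artinian}.

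The decisive step is that $y_i \in S_i$ annihilates every other summand, so
\[
b y_i b = \Big(\sum_j b_j\Big) y_i \Big(\sum_k b_k\Big) = b_i y_i b_i = b_i.
\]
Applying (\ref{eq:bbx}) with $b' = b$ gives $[b,[b,y_i]] = -2 b y_i b = -2 b_i$, and since $[B,[B,L]] \subseteq B$ and $p \neq 2$, this forces $b_i \in B$. Hence $b_i \in B_i := B \cap S_i$ for every $i$, so $B = \bigoplus_i B_i$; each $B_i = B \cap [S_i,S_i]$ is a Jordan-Lie inner ideal of $[S_i, S_i]$ by Lemma \ref{lem:Bav=000026Row 2.16}(i) together with $B_i^2 \subseteq B^2 = 0$. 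The main obstacle is ensuring the regularizer $y_i$ lies in $L$ rather than merely in $A$; this is exactly why Theorem \ref{thm:B=00003DeAf if A is simple Artinian} is indispensable, since one needs the orthogonality of the pair $(e_i, f_i)$ to realize $y_i$ as a commutator in $[S_i, S_i]$.
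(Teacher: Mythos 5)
Your proof is correct and follows essentially the same route as the paper: project $B$ onto each simple component, note the projection is a Jordan-Lie inner ideal there, and then use the idempotent description plus Von Neumann regularity to produce an element $y_i\in[S_i,S_i]$ with $[b,[b,y_i]]=-2b_i$, forcing $\pi_i(B)\subseteq B$. The paper packages this last step as Lemma \ref{lem:B=00003D=00005BB,=00005BB,A=00005D=00005D if A is simple Artinian} (writing $\psi_i(B)=[\psi_i(B),[\psi_i(B),L_i]]\subseteq[B,[B,L_i]]\subseteq B_i$), whereas you unfold that lemma's proof inline; the content is the same.
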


\begin{proof}
Let $\psi_{i}:L\rightarrow L_{i}$, $\psi_{i}((x_{1},\ldots,x_{i},\ldots)=x_{i}$,
be the natural projection. We need to show that $\psi_{i}(B)=B_{i}$.
By Lemma \ref{lem:Bav=000026Row 2.16}, $\psi_{i}(B)$ is a Jordan-Lie
inner ideal of $L_{i}$. Clearly, $B_{i}\subseteq\psi_{i}(B)$. On
the other hand, by Lemma \ref{lem:B=00003D=00005BB,=00005BB,A=00005D=00005D if A is simple Artinian}
\[
\psi_{i}(B)=[\psi_{i}(B),[\psi_{i}(B),L_{i}]]\subseteq[B,[B,L_{i}]]\subseteq B\cap L_{i}\subseteq B_{i}
\]
 for all $i\in I$. Therefore, $B=\bigoplus_{i\in I}B_{i}$. Since
$B\subseteq[A,A]$ we have $B_{i}\subseteq[S_{i},S_{i}]$, as required. 
\end{proof}
The following proposition first appeared in \cite[Lemma 6.6]{Rowley}
in the case $p=0$. 
\begin{lem}
\label{B=00003DeAf if A is semi Artinian } Suppose $A$ is semisimple
and $p\neq2,3$. Let $B$ be a Jordan-Lie inner ideal of $L=[A,A]$.
Then there exists a strict orthogonal idempotent pair $(e,f)$ in
$A$ such that $B=eAf$. 
\end{lem}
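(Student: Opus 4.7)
The plan is to reduce to the simple components and then glue idempotent pairs together. First, I would apply Lemma~\ref{lem:B=00003DB1+...} to write $A = \bigoplus_{i \in I} S_{i}$ as a direct sum of simple ideals $S_{i} \cong M_{n_{i}}(\bbF)$ and correspondingly to decompose $B = \bigoplus_{i \in I} B_{i}$, where $B_{i} = B \cap S_{i}$ is a Jordan-Lie inner ideal of $[S_{i}, S_{i}]$.

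Next, since $p \neq 2, 3$ and each $S_{i}$ is a simple Artinian ring, Theorem~\ref{thm:B=00003DeAf if A is simple Artinian} supplies, for every $i$ with $B_{i} \neq 0$, an orthogonal idempotent pair $(e_{i}, f_{i})$ in $S_{i}$ with $B_{i} = e_{i} S_{i} f_{i}$. Moreover, because $B_{i} \neq 0$, the equality $B_{i} = e_{i} S_{i} f_{i}$ forces both $e_{i}$ and $f_{i}$ to be non-zero. For the indices $i$ with $B_{i} = 0$, I set $e_{i} = f_{i} = 0$.

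Then I would define $e = \sum_{i \in I} e_{i}$ and $f = \sum_{i \in I} f_{i}$. Because distinct simple components $S_{i}$ annihilate each other, $e$ and $f$ are idempotents in $A$, orthogonality $ef = fe = 0$ reduces to the componentwise relation $e_{i} f_{i} = f_{i} e_{i} = 0$, and
\[
eAf \;=\; \bigoplus_{i \in I} e_{i} S_{i} f_{i} \;=\; \bigoplus_{i \in I} B_{i} \;=\; B.
\]
The strictness of $(e, f)$ is then immediate: as $A$ is semisimple, $\bar{A} = A$, and the projections of $e$ and $f$ onto $S_{i}$ are exactly $e_{i}$ and $f_{i}$, which by construction are simultaneously zero (when $B_{i} = 0$) or simultaneously non-zero (when $B_{i} \neq 0$).

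I do not foresee a serious obstacle: all the substantive work has been done in Lemma~\ref{lem:B=00003DB1+...} (which rests on Lemma~\ref{lem:B=00003D=00005BB,=00005BB,A=00005D=00005D if A is simple Artinian} to confirm $\psi_{i}(B) = B_{i}$) and in Theorem~\ref{thm:B=00003DeAf if A is simple Artinian}. The only point needing care is verifying that $e_{i}$ and $f_{i}$ cannot vanish whenever $B_{i} \neq 0$, which follows from the identity $B_{i} = e_{i} S_{i} f_{i}$. The degenerate case $B = 0$ is handled by taking $(e, f) = (0, 0)$ if one wishes to include it under the convention of a \emph{strict} pair.
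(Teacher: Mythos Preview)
Your proposal is correct and follows essentially the same route as the paper's own proof: decompose $B$ via Lemma~\ref{lem:B=00003DB1+...}, apply Theorem~\ref{thm:B=00003DeAf if A is simple Artinian} on each simple component, set $e_i=f_i=0$ where $B_i=0$, and sum the idempotents. Your write-up is in fact slightly more explicit than the paper's in justifying strictness (noting that $B_i\neq0$ forces both $e_i,f_i\neq0$) and in handling the degenerate case $B=0$.
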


\begin{proof}
Let $\{S_{i}\mid i\in I\}$ be the set of the simple components of
$A$. Using Theorem \ref{thm:B=00003DeAf if A is simple Artinian}
and Lemma \ref{lem:B=00003DB1+...} we get that $B=\bigoplus_{i\in I}e_{i}S_{i}f_{i}$
for some orthogonal idempotent pairs $(e_{i},f_{i})$ in $S_{i}$.
Moreover, we can assume that $e_{i}=f_{i}=0$ if $B_{i}=B\cap S_{i}=0$.
Put $e=\sum_{i\in I}e_{i}$ and $f=\sum_{i\in I}f_{i}$. Then $(e,f)$
is a strict orthogonal idempotent pair in $A$ and $eAf=\bigoplus_{i\in I}e_{i}S_{i}f_{i}=B$,
as required. 
\end{proof}
\begin{lem}
\label{lem:If A semi, B of A(-) =000026 =00005BA,A=00005D } Suppose
$A$ is semisimple and $p\neq2,3$. Let $B$ be a Jordan-Lie inner
ideal of $A^{(-)}$. Then $B$ is a Jordan-Lie inner ideal of $[A,A]$. 
\end{lem}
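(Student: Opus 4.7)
The plan is to reduce the statement to showing the containment $B\subseteq [A,A]$; once this is in hand, the inner ideal condition $[B,[B,[A,A]]]\subseteq B$ is immediate from $[A,A]\subseteq A$ and the hypothesis $[B,[B,A]]\subseteq B$, while $B^{2}=0$ is already part of the Jordan-Lie hypothesis. So the whole content of the lemma is the inclusion $B\subseteq [A,A]$.

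To establish that inclusion, I would first observe that for any $b\in B$, the hypothesis $B^{2}=0$ gives $b^{2}=0$, so every element of $B$ is a nilpotent element of $A$. Writing $A=\bigoplus_{i\in I}S_{i}$ with $S_{i}\cong M_{n_{i}}(\bbF)$ the simple components of the semisimple algebra $A$ (using that $\bbF$ is algebraically closed), the projection $b_{i}$ of $b$ onto $S_{i}$ is a nilpotent matrix, and a nilpotent matrix has all eigenvalues zero and hence trace zero in every characteristic.

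Next, I would recall that $[M_{n}(\bbF),M_{n}(\bbF)]=\sl_{n}(\bbF)$ in every characteristic: commutators are always traceless, and conversely every traceless matrix is a sum of commutators via the standard identities $e_{ij}=[e_{ii},e_{ij}]$ for $i\ne j$ and $e_{ii}-e_{jj}=[e_{ij},e_{ji}]$. Hence each $b_{i}\in [S_{i},S_{i}]$, and therefore $b\in\bigoplus_{i\in I}[S_{i},S_{i}]=[A,A]$, as required.

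There is essentially no obstacle in this argument; the lemma reduces to the elementary observation that a square-zero element of a semisimple $\bbF$-algebra is nilpotent, and hence traceless in every simple component, and hence lies in the derived subalgebra. The restriction $p\ne 2,3$ is in fact not used by this approach, but is retained to keep the hypotheses uniform with the surrounding results (notably Theorem \ref{thm:B=00003DeAf if A is simple Artinian} and Lemma \ref{B=00003DeAf if A is semi Artinian }) that this lemma will be combined with elsewhere.
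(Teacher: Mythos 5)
Your argument is correct, but it takes a genuinely different route from the paper's. The paper's proof is a one-line ring-theoretic computation: since a semisimple Artinian ring is Von Neumann regular, each $b\in B$ satisfies $b=bxb$ for some $x\in A$, and then $b^{2}=0$ gives $b=bxb=b(xb)-(xb)b=[b,xb]\in[A,A]$, exhibiting $b$ as a single commutator. You instead decompose $A=\bigoplus_{i}S_{i}$ with $S_{i}\cong M_{n_{i}}(\bbF)$, observe that $b^{2}=0$ forces each component $b_{i}$ to be a square-zero, hence traceless, matrix, and invoke $[M_{n}(\bbF),M_{n}(\bbF)]=\sl_{n}(\bbF)$ in all characteristics. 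Both arguments are valid and neither uses $p\ne2,3$; your closing remark that this hypothesis is retained only for uniformity is consistent with the paper, whose proof likewise never invokes it. The trade-off: your approach leans on the Wedderburn decomposition over the algebraically closed field $\bbF$ (for a general semisimple Artinian ring with non-split simple components one would need a reduced-trace substitute), whereas the paper's regularity argument is coordinate-free and applies verbatim to any semisimple Artinian ring; on the other hand, your trace argument is more elementary in that it avoids quoting Von Neumann regularity. You are also right that the only content of the lemma is the inclusion $B\subseteq[A,A]$, after which the inner ideal property for $[A,A]$ is immediate from that for $A^{(-)}$.
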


\begin{proof}
Let $b\in B$. Since $A$ is Von Neumann regular, there is $x\in A$
such that $b=bxb$. As $b^{2}=0$, 
\[
b=bxb=b(xb)-(xb)b=[b,xb]\in[A,A].
\]
Therefore, $B\subseteq[A,A]$, so $B$ is a Jordan-Lie inner ideal
of $[A,A]$. 
\end{proof}
Lemmas \ref{B=00003DeAf if A is semi Artinian } and \ref{lem:If A semi, B of A(-) =000026 =00005BA,A=00005D }
imply that all Jordan-Lie inner ideals of $A^{(-)}$ are generated
by idempotents, which is essentially known, see for example \cite[Theorem 6.1(2)]{Lopez}.
We summarize description of Jordan-Lie inner ideals of $A^{(k)}$
in the following proposition.
\begin{prop}
\label{cor:JL=00003DeAf} Suppose $A$ is semisimple, $p\neq2,3$
and $k\ge0$. Let $B$ be a subspace of $A$. Then $B$ is a Jordan-Lie
inner ideal of $A^{(k)}$ if and only if $B=eAf$ where $(e,f)$ is
a strict orthogonal idempotent pair in $A$. 
\end{prop}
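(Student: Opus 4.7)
The plan is to assemble the proposition directly from the lemmas already established in the excerpt, splitting the argument into the two implications and handling the small-$k$ and large-$k$ cases separately.

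For the forward direction, suppose $B$ is a Jordan-Lie inner ideal of $A^{(k)}$. I would first dispose of the case $k=0$: here $A^{(0)}=A^{(-)}$, so Lemma~\ref{lem:If A semi, B of A(-) =000026 =00005BA,A=00005D } tells me that $B$ is automatically contained in $[A,A]$ and is a Jordan-Lie inner ideal of $[A,A]$; then Lemma~\ref{B=00003DeAf if A is semi Artinian } yields $B=eAf$ for a strict orthogonal idempotent pair $(e,f)$. For $k\ge 1$, Proposition~\ref{prop:=00005BA,A=00005D quasi Levi} (using $p\ne 2$ and semisimplicity of $A$) gives $A^{(\infty)}=A^{(1)}$, so $A^{(k)}=[A,A]$; hence $B$ is a Jordan-Lie inner ideal of $[A,A]$ and Lemma~\ref{B=00003DeAf if A is semi Artinian } again applies.

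For the reverse direction, suppose $B=eAf$ for a strict orthogonal idempotent pair $(e,f)$ in $A$. Since $fe=0$, Lemma~\ref{lem:eAf cap Z =00003D 0}(iii) asserts that $eAf$ is a Jordan-Lie inner ideal both of $A^{(-)}=A^{(0)}$ and of $A^{(1)}$. This settles $k=0$ immediately, and for $k\ge 1$ we once more invoke $A^{(k)}=A^{(1)}$ to conclude.

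There is no genuine obstacle here; the proof is essentially bookkeeping. The only point worth flagging is that the hypothesis $p\ne 2$ is used twice (once implicitly inside Lemma~\ref{B=00003DeAf if A is semi Artinian }, and once to guarantee $A^{(k)}=[A,A]$ for $k\ge 1$ via Proposition~\ref{prop:=00005BA,A=00005D quasi Levi}), so combining the two ranges of $k$ under the single hypothesis $p\ne 2,3$ is clean.
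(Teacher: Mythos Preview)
Your proof is correct and follows essentially the same approach as the paper: you reduce $k\ge1$ to $k=1$ via Proposition~\ref{prop:=00005BA,A=00005D quasi Levi}, handle the forward direction using Lemmas~\ref{lem:If A semi, B of A(-) =000026 =00005BA,A=00005D } and~\ref{B=00003DeAf if A is semi Artinian }, and the reverse direction using Lemma~\ref{lem:eAf cap Z =00003D 0}(iii). The paper's proof is simply a compressed version of what you wrote.
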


\begin{proof}
By Proposition \ref{prop:=00005BA,A=00005D quasi Levi}, $A^{(k)}=A^{(1)}$
if $k\ge1$. The ``only if'' part now follows from Lemmas \ref{B=00003DeAf if A is semi Artinian }
($k\ge1)$ and \ref{lem:If A semi, B of A(-) =000026 =00005BA,A=00005D }
($k=0$), and the ``if'' part follows from Lemma \ref{lem:eAf cap Z =00003D 0}(iii).
\end{proof}
\begin{rem}
Proposition \ref{cor:JL=00003DeAf} (as well as Lemma \ref{lem:B is L-perfect if A is semi}
below) can also be easily deduced from a general result by Fernández
López \cite[Theorem 6.1(2)]{Lopez}, which essentially classifies
all inner ideals of semiprime rings coinciding with their socles.
\end{rem}

\section{$L$-Perfect inner ideals }

\subsection*{$\boldsymbol{1}$-perfect associative algebras and their associated
Lie algebras}
\begin{defn}
\label{def:Lie solvable } The associative algebra $A$ is said to
be \emph{Lie solvable} if the Lie algebra $A^{(-)}$ is solvable. 
\end{defn}

The following is well known. 
\begin{lem}
\label{series} Let $p\ne2$. Then the following are equivalent. 

(i) $A$ is Lie solvable. 

(ii) There is a descending chain of ideals $A=A_{0}\supset A_{1}\supset\dots\supset A_{r}=\{0\}$
of $A$ such that $\dim A_{i}/A_{i+1}=1$ for $0\leq i\leq r-1$. 

(iii) There is a descending chain of subalgebras $A=A_{0}\supset A_{1}\supset\dots\supset A_{r}=\{0\}$
of $A$ such that $A_{i+1}$ is an ideal of $A_{i}$ and $\dim A_{i}/A_{i+1}=1$
for $0\leq i\leq r-1$. 
\end{lem}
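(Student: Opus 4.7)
The plan is to establish the cyclic chain of implications (ii) $\Rightarrow$ (iii) $\Rightarrow$ (i) $\Rightarrow$ (ii). The first two are essentially formal. For (ii) $\Rightarrow$ (iii), every two-sided ideal of $A$ is a fortiori an ideal of any subalgebra containing it, so the same chain works. For (iii) $\Rightarrow$ (i), passing to $A^{(-)}$ turns the chain in (iii) into a descending chain of Lie subalgebras with one-dimensional (hence abelian) successive quotients, since an associative ideal is automatically a Lie ideal; this witnesses solvability of $A^{(-)}$.

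The real content is (i) $\Rightarrow$ (ii). I would start by establishing the intermediate claim that Lie solvability of $A$ forces $\bar A = A/\rad A$ to be commutative. Since $\bbF$ is algebraically closed and $\bar A$ is semisimple, $\bar A \cong \bigoplus_i M_{n_i}(\bbF)$. As $\bar A^{(-)}$ is a Lie quotient of the solvable $A^{(-)}$, it is solvable; but by Proposition \ref{prop:=00005BA,A=00005D quasi Levi} and the hypothesis $p\neq 2$, the derived algebra of each non-commutative factor $M_{n_i}(\bbF)$ is the quasi simple Lie algebra $\sl_{n_i}(\bbF)$, which is not solvable. Hence every $n_i=1$ and $\bar A\cong \bbF^m$ is commutative.

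Next, I would build the flag of ideals in two stages, using the associative nilpotency $N^t=0$ of $N=\rad A$. For each $i$, the quotient $N^i/N^{i+1}$ is an $A$-bimodule annihilated by $N$ on both sides, hence a module over the commutative algebra $\bar A \otimes \bar A$. Since $\bbF$ is algebraically closed, the commuting operators coming from left and right multiplication by $\bar A$ can be simultaneously triangularized, giving a complete flag of $\bar A$-subbimodules of $N^i/N^{i+1}$ with one-dimensional quotients; lifting to $A$, this refines $N^i\supset N^{i+1}$ to a chain of $A$-subbimodules, i.e.\ two-sided ideals of $A$, with one-dimensional quotients. For the top layer $\bar A\cong \bbF^m$, the flag $0\subset \bbF\bar e_1\subset \bbF\bar e_1\oplus \bbF\bar e_2\subset\cdots\subset \bar A$ through the primitive idempotents pulls back to a chain of ideals of $A$ descending from $A$ to $N$. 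Concatenating yields the chain required by (ii).

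The main technical obstacle is the construction inside each layer $N^i/N^{i+1}$: one needs the standard but crucial fact that a finite-dimensional module over a commutative algebra over an algebraically closed field admits a complete flag of submodules. Once that is granted, everything else in the argument is elementary bookkeeping, and the restriction $p\neq 2$ is used solely to ensure that $\sl_{n_i}(\bbF)$ is non-solvable for $n_i\ge 2$.
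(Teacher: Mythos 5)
Your proof is correct and follows essentially the same route as the paper's: the forward implications are formal, and for (i) $\Rightarrow$ (ii) both arguments first use the non-solvability of $\sl_{n}(\bbF)$ for $n\ge2$, $p\ne2$ to conclude $\bar A\cong\bbF^{m}$, and then exploit that all composition factors of the radical layers, viewed as bimodules over the commutative semisimple quotient, are one-dimensional. The only difference is cosmetic: the paper packages the refinement of the chain as an induction on the nilpotency degree of $R$, whereas you concatenate explicit flags in each layer $N^{i}/N^{i+1}$.
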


\begin{proof}
The implications $(ii)\Rightarrow(iii)$ and $(iii)\Rightarrow(i)$
are obvious (as $A^{(i)}\subseteq A_{i}$ for all $i$). To prove
$(i)\Rightarrow(ii)$, suppose that $A$ is Lie solvable. Let $R$
be the radical of $A$ and let $S=A/R$. Then $S$ is a Lie solvable
semisimple algebra, so by Lemma \ref{lem:sl_n} and Proposition \ref{prop:=00005BA,A=00005D quasi Levi},
$S\cong\bbF^{m}$ direct sum of $m$ copies of $\bbF$ for some $m$.
If $S=0$, then $A=R$ is nilpotent, so such a chain exists. Suppose
that $S\neq0$. Since all simple components of $S$ are 1-dimensional,
all composition factors of the $S$-bimodule $R/R^{2}$ are one-dimensional,
so there is a chain of ideals in $A/R^{2}$ with 1-dimensional quotients.
The lemma now follows by induction on the degree of nilpotency of
$R$. 
\end{proof}
\begin{defn}
An associative algebra is said to be $1$-\emph{perfect} if it has
no ideals of codimension 1. An ideal is said to be $1$-\emph{perfect}
if it is $1$-perfect as an algebra.
\end{defn}

By using 2nd and 3rd Isomorphism Theorems we get the following properties
of $1$-perfect ideals.
\begin{lem}
\label{1p} (i) The sum of $1$-perfect ideals is $1$-perfect.

(ii) If $P$ is a $1$-perfect ideal of $A$ and $Q$ is a $1$-perfect
ideal of $A/P$ then the full preimage of $Q$ in $A$ is a $1$-perfect
ideal of $A$.
\end{lem}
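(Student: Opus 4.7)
The plan is to prove both parts by contradiction, in each case intersecting a hypothetical codimension-$1$ ideal with a constituent $1$-perfect piece and appealing to the natural embedding of quotients.

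For part (i), let $P_{1}$ and $P_{2}$ be $1$-perfect ideals of $A$, and suppose for contradiction that $J$ is an ideal of $P_{1}+P_{2}$ with $\dim (P_{1}+P_{2})/J = 1$. I would first check that $J \cap P_{i}$ is an ideal of $P_{i}$ for $i=1,2$: this uses only that $P_{i}$ is a two-sided ideal of $A$ (hence of $P_{1}+P_{2}$), so multiplying $J \cap P_{i}$ on either side by an element of $P_{i}$ lands in both $J$ and $P_{i}$. The natural injection $P_{i}/(J \cap P_{i}) \hookrightarrow (P_{1}+P_{2})/J$ then forces $\dim P_{i}/(J \cap P_{i}) \le 1$, and $1$-perfectness of $P_{i}$ rules out codimension $1$, so $P_{i} \subseteq J$. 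Doing this for both $i$ gives $P_{1}+P_{2} \subseteq J$, a contradiction.

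For part (ii), write $\tilde{Q}$ for the full preimage of $Q$ in $A$, and suppose $J$ is an ideal of $\tilde{Q}$ of codimension $1$. The same intersection trick applies to $J \cap P$: it is an ideal of $P$, and the embedding $P/(J \cap P) \hookrightarrow \tilde{Q}/J$ shows its codimension in $P$ is at most $1$. By $1$-perfectness of $P$ we must have $P \subseteq J$, and then $J/P$ is an ideal of $\tilde{Q}/P = Q$ of codimension $1$, contradicting the $1$-perfectness of $Q$.

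There is no real obstacle here; the proof is pure bookkeeping with codimensions and ideal intersections. The only point requiring a moment's care is verifying that the various intersections really are two-sided ideals of the smaller algebra, which reduces immediately to the fact that the $P_{i}$ (respectively $P$) are ideals of the ambient algebra $A$.
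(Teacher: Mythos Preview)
Your proof is correct. The paper states this lemma without proof, describing it merely as one of the ``obvious properties of $1$-perfect ideals,'' so there is nothing to compare against.

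One very minor remark: you prove part (i) only for two ideals $P_{1},P_{2}$, whereas the statement (and its subsequent use in constructing the $1$-perfect radical $\ccP_{1}(A)$ as the sum of \emph{all} $1$-perfect ideals) requires arbitrary sums. Your argument adapts verbatim to any family $\{P_{i}\}_{i\in I}$: if $J$ has codimension $1$ in $\sum_{i}P_{i}$, the same embedding $P_{i}/(J\cap P_{i})\hookrightarrow(\sum_{j}P_{j})/J$ forces each $P_{i}\subseteq J$ individually, whence $\sum_{i}P_{i}\subseteq J$. Also, in your verification that $J\cap P_{i}$ is an ideal of $P_{i}$, all you actually need is that $P_{i}$ is a subalgebra of $P_{1}+P_{2}$ and that $J$ is an ideal of $P_{1}+P_{2}$; the hypothesis that $P_{i}$ is an ideal of $A$ is not used at that step (though it is of course part of the setup).
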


Lemma \ref{1p}(i) implies that every algebra has the largest $1$-perfect
ideal. 
\begin{defn}
\label{def:1-perfect radical} The largest $1$-perfect ideal $\ccP_{1}(A)$
of $A$ is called the\emph{ }$1$\emph{-perfect radical }of $A$.
\end{defn}

The following proposition shows that $\ccP_{1}(A)$ has radical-like
properties indeed. 
\begin{prop}
\label{1pproperties} (i) $\ccP_{1}(A)^{2}=\ccP_{1}(A)$.

(ii) $\ccP_{1}(\ccP_{1}(A))=\ccP_{1}(A)$.

(iii) $\ccP_{1}(A/\ccP_{1}(A))=0$.

(iv) Let $A=A_{0}\supset A_{1}\supset\dots\supset A_{r}$ be any maximal
chain of subalgebras of $A$ such that $A_{i+1}$ is an ideal of $A_{i}$
and $\dim A_{i}/A_{i+1}=1$ for $0\leq i\leq r-1$. Then $A_{r}=\ccP_{1}(A)$.

(v) If $p\ne2$ then $\ccP_{1}(A)=0$ if and only if $A$ is Lie solvable.

(vi) If $p\ne2$ then $\ccP_{1}(A)$ is the smallest ideal of $A$
such that $A/\ccP_{1}(A)$ is Lie solvable.
\end{prop}

\begin{proof}
(i) Let $P=\ccP_{1}(A)$. We need to show $P^{2}=P$. Let $Q=P/P^{2}$.
Then $Q$ is a finite dimensional algebra with $Q^{2}=0$. Suppose
$Q\ne0$. Then $Q$ has an ideal $N$ of codimension 1 and the full
preimage of $N$ in $P$ is an ideal of $P$ of codimension 1. Thus,
$P$ is not 1-perfect, which is a contradiction.

(ii) is obvious.

(iii) follows from Lemma \ref{1p}(ii).

(iv) Note that $A_{r}$ is 1-perfect. This implies that $A_{r}^{2}=A_{r}$
and $A_{r}$ is actually a 1-perfect ideal of $A$: 
\[
AA_{r}=AA_{r}A_{r}\dots A_{r}\subseteq A_{0}A_{1}A_{2}\dots A_{r}\subseteq A_{r}
\]
 (and similarly $A_{r}A\subseteq A_{r}$). Thus $A_{r}\subseteq\ccP_{1}(A)$.
Fix maximal $k$ such that $\ccP_{1}(A)\subseteq A_{k}$. If $k<r$
then $A_{k+1}\cap\ccP_{1}(A)$ is an ideal of $\ccP_{1}(A)$ of codimension
$1$, a contradiction, so $k=r$ and $A_{r}=\ccP_{1}(A)$, as required.

(v) and (vi) follow from (iv) and Lemma \ref{series}.
\end{proof}
Importance of $1$-perfect algebras is shown by the following result
from \cite{Baranov1}. 
\begin{thm}
\label{BaranovLiePerfect} \cite[Theorem 1.1(1)]{Baranov1} If $A$
is $1$-perfect and $p\ne2$, then $[A,A]$ is a perfect Lie algebra.
\end{thm}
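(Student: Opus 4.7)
The plan is by induction on $\dim A$. For the base case, $A$ is semisimple: the $1$-perfectness forces each simple summand of $A$ to be of the form $M_{n_i}(\bbF)$ with $n_i \geq 2$ (a one-dimensional summand would yield a codimension-one ideal), and then $[A,A] = \bigoplus \sl_{n_i}(\bbF)$ is quasi semisimple by Proposition~\ref{prop:=00005BA,A=00005D quasi Levi}, hence Lie-perfect.

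For the inductive step, let $R = \rad A \neq 0$ and pick a minimal nonzero ideal $I$ of $A$ contained in $R$. By minimality and nilpotency of $R$, we have $I^2 = 0$ and $RI = IR = 0$, making $I$ a simple $S$-bimodule (for any Levi subalgebra $S$), isomorphic to $V_i \otimes V_j^*$ for some $i, j$, where $V_i, V_j$ are the natural $S_i$-, $S_j$-modules. The quotient $A/I$ is $1$-perfect (any codimension-one ideal of $A/I$ would lift to one in $A$) and of strictly smaller dimension, so by induction $[A/I, A/I]$ is Lie-perfect; this gives $[A, A] = [[A,A],[A,A]] + ([A,A] \cap I)$, reducing the problem to showing $[A, A] \cap I \subseteq [[A,A],[A,A]]$.

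Since $A/R \cong S$ is also $1$-perfect, $n_i, n_j \geq 2$, and $V_i$ is then a nontrivial irreducible $\sl_{n_i}$-module, so $\sl_{n_i} \cdot V_i = V_i$. Translating to Lie brackets in $A$: when $i \neq j$, $I = [\sl_{n_i}, I]$, whence $I \subseteq [A,A]$ and further $I = [[S_i, S_i], I] \subseteq [[A,A],[A,A]]$. When $i = j$, the same computation places the codimension-one traceless subspace $[\sl_{n_i}, I] \subset I \cong M_{n_i}$ inside $[[A,A],[A,A]]$.

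The main obstacle is the remaining $1$-dimensional trace direction of $I$ when $i = j$. A careful analysis of the $S$-bimodule decomposition of $R$ and its interaction with the associative multiplication shows that any trace contribution to $[A,A] \cap I$ must arise from brackets of off-diagonal pairs, namely $[r, r']$ with $r$ in some summand $V_i \otimes V_k^* \subseteq R$ and $r'$ in the transposed summand $V_k \otimes V_i^* \subseteq R$ for some $k \neq i$ (diagonal–diagonal contributions vanish by the cyclic symmetry of the matrix trace, and all other pairings fail to hit $I$). Each such $r, r'$ already lies in $[A,A]$ by the off-diagonal argument above, applied to the relevant bimodule summand, so $[r, r'] \in [[A,A],[A,A]]$. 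One then isolates the $I$-isotypic component of such a bracket using the compatibility of $[[A,A],[A,A]]$ with the $S$-bimodule decomposition of $A$ — this last bookkeeping step is the main technical difficulty, and is where the detailed argument of \cite{Baranov1} does most of its work.
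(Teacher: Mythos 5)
The paper does not actually prove this statement --- it is imported verbatim from \cite{Baranov1} --- so there is no in-paper proof to compare against and your sketch has to stand on its own. Your overall strategy (induct on a minimal ideal $I\subseteq R$, note $I^{2}=RI=IR=0$ so that $I$ is a simple $S$-bimodule, pass to the $1$-perfect quotient $A/I$, and reduce to showing $[A,A]\cap I\subseteq[[A,A],[A,A]]$) is sound, the reduction identity $[A,A]=[[A,A],[A,A]]+([A,A]\cap I)$ is correct, and the case $I\cong V_{i}\otimes V_{j}^{*}$ with $i\ne j$ is handled correctly via $I=[\sl_{n_{i}},I]$. But there are two genuine gaps. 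First, by Lemma \ref{lem:natural bi-module} a simple $S$-bimodule need not be of the form $V_{i}\otimes V_{j}^{*}$: it can be a one-sided natural module $U_{i0}$ or $U_{0j}$ (these are easy, same argument as $i\ne j$) or the trivial bimodule $U_{00}$. The trivial case really does occur for $1$-perfect $A$: take $A=M_{2}(\bbF)\oplus V\oplus V^{*}\oplus\bbF z$ with $V$ the natural left module ($VS=0$), $V^{*}$ the natural right module ($SV^{*}=0$), $V^{*}\cdot V=\bbF z$ via the pairing, and all other products of radical elements zero; here $\bbF z$ is a minimal ideal annihilated by $A$ on both sides, yet $z\in[R,R]$. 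This case presents exactly the difficulty of your ``trace direction'' and your sketch does not cover it.

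Second, and more seriously, the decisive step --- showing that the trace direction of $I$ (when $I\cong U_{ii}$, or all of $I$ when $I\cong U_{00}$) lies in $[[A,A],[A,A]]$ whenever it lies in $[A,A]$ --- is described rather than proved. The phrases ``a careful analysis \dots{} shows'' and ``this last bookkeeping step is the main technical difficulty, and is where the detailed argument of \cite{Baranov1} does most of its work'' defer precisely the one non-routine part of the theorem. The single concrete claim you do make there is also unjustified: if $U$ and $U'$ are two components of $R$ isomorphic to $U_{ii}$, the bimodule maps $U\otimes U'\to R$ and $U'\otimes U\to R$ are scalar multiples of matrix multiplication landing in possibly \emph{different} copies of $U_{ii}$ and with possibly \emph{different} scalars, so ``cyclic symmetry of the matrix trace'' does not by itself kill the diagonal--diagonal contribution to the trace line of $I$; one needs an actual argument (e.g.\ exploiting associativity of the three-fold products or $A=A^{2}$) to control which brackets $[r,r']$ can produce a nonzero trace component and to show those $r,r'$ already lie in $[A,A]$ compatibly. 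As it stands the proposal is an accurate plan with the crux missing.
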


Combining this result with Proposition \ref{1pproperties}(vi) we
get the following.
\begin{lem}
\label{lem:A^(k)=00003Dp(A)} Let $p\ne2$. Then $A^{(\infty)}=\ccP_{1}(A)^{(1)}$.
\end{lem}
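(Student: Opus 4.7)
The plan is to prove the identity by establishing the two inclusions separately, using the two preceding results as the main inputs. Write $P = \mathcal{P}_1(A)$ for brevity. Since $A$ is finite dimensional, the derived series $A^{(0)} \supseteq A^{(1)} \supseteq A^{(2)} \supseteq \dots$ stabilizes, so $A^{(\infty)}$ is well-defined as the common value $A^{(N)} = A^{(N+1)} = \dots$ for large $N$.

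For the inclusion $P^{(1)} \subseteq A^{(\infty)}$, I would invoke Theorem \ref{BaranovLiePerfect}: since $P$ is $1$-perfect and $p \ne 2$, the Lie algebra $P^{(1)} = [P,P]$ is perfect, i.e.\ $[P^{(1)}, P^{(1)}] = P^{(1)}$. In particular $(P^{(1)})^{(j)} = P^{(1)}$ for every $j \ge 0$. Since $P \subseteq A$, we have $P^{(1)} \subseteq A^{(1)}$, and then a trivial induction on $k$ gives
\[
P^{(1)} = (P^{(1)})^{(k-1)} \subseteq (A^{(1)})^{(k-1)} = A^{(k)}
\]
for all $k \ge 1$. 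Taking the intersection over $k$ yields $P^{(1)} \subseteq A^{(\infty)}$.

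For the reverse inclusion $A^{(\infty)} \subseteq P^{(1)}$, I would use Proposition \ref{1pproperties}(iv): since $p \ne 2$, the quotient $A/P$ is Lie solvable. Hence $(A/P)^{(k)} = 0$ for some $k \ge 0$, which translates to $A^{(k)} \subseteq P$. Applying one more commutator gives
\[
A^{(k+1)} = [A^{(k)}, A^{(k)}] \subseteq [P,P] = P^{(1)}.
\]
Since $A^{(\infty)} \subseteq A^{(k+1)}$, this completes the other inclusion.

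I do not anticipate any serious obstacle: the argument is purely a packaging of Theorem \ref{BaranovLiePerfect} (giving that $P^{(1)}$ is perfect, hence sits in every derived power of $A$) and Proposition \ref{1pproperties}(iv) (giving that the derived series of $A$ eventually lands inside $P$, hence in $P^{(1)}$ after one further bracket). The only place one must be a little careful is to note the finite-dimensionality to make sense of $A^{(\infty)}$ and to use $p \ne 2$ in both inputs.
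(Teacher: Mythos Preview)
Your proof is correct and follows essentially the same approach as the paper's: both use Proposition \ref{1pproperties}(iv) to push the derived series of $A$ into $\mathcal{P}_1(A)^{(1)}$, and Theorem \ref{BaranovLiePerfect} to see that $\mathcal{P}_1(A)^{(1)}$ is perfect and hence contained in every $A^{(k)}$. The paper's version is slightly terser, concluding directly that $A^{(n+1)}=\mathcal{P}_1(A)^{(1)}$ at a finite stage rather than separating the two inclusions, but the content is identical.
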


\begin{proof}
Since $A/\ccP_{1}(A)$ is Lie solvable, there is $n\ge0$ such that
$(A/\ccP_{1}(A))^{(n)}=0$, so $A^{(n+1)}\subseteq\ccP_{1}(A)^{(1)}$.
As $\ccP_{1}(A)$ is $1$-perfect, by Theorem \ref{BaranovLiePerfect},
$\ccP_{1}(A)^{(1)}$ is perfect, so $A^{(\infty)}=A^{(n+1)}=\ccP_{1}(A)^{(1)}$. 
\end{proof}

\subsection*{$L$-perfect inner ideals}
\begin{defn}
\label{def:Perfect} Let $L$ be a Lie algebra and let $B$ be an
inner ideal of $L$. We say that $B$ is \emph{$L$-perfect} if $B=[B,[B,L]]$. 
\end{defn}

It is known that every inner ideal of a semisimple Lie algebra $L$
is $L$-perfect if $p\neq2,3,5,7$, see for example \cite[Proposition 2.3]{Lop=000026Gar=000026Loz-1}
(or \cite[Lemmas 2.19 and 2.20]{Bav=000026Row} for characteristic
zero). As the following lemma shows we need less restrictions on $p$
if $L=[A,A]$ and $B$ is Jordan-Lie. 
\begin{lem}
\label{lem:B is L-perfect if A is semi} Suppose $A$ is semisimple,
$k\ge0$ and $p\neq2,3$. Then every Jordan-Lie inner ideal of $L=A^{(k)}$
is $L$-perfect. 
\end{lem}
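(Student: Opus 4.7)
The plan is to reduce to the simple case (where the required identity $B=[B,[B,L]]$ is already available from Lemma \ref{lem:B=00003D=00005BB,=00005BB,A=00005D=00005D if A is simple Artinian}) and to handle the two values of $k$ that actually need attention separately.

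First I would dispose of the inclusion $[B,[B,L]]\subseteq B$, which is just the definition of an inner ideal. The whole content is therefore the reverse inclusion $B\subseteq[B,[B,L]]$. Next I would split into two cases depending on whether $k\ge1$ or $k=0$. If $k\ge1$, Proposition \ref{prop:=00005BA,A=00005D quasi Levi} gives $L=A^{(k)}=A^{(1)}=[A,A]$, so by Lemma \ref{lem:B=00003DB1+...} the inner ideal $B$ decomposes as $B=\bigoplus_{i\in I}B_{i}$, where $\{S_{i}\}_{i\in I}$ are the simple components of $A$ and each $B_{i}=B\cap S_{i}$ is a Jordan-Lie inner ideal of $[S_{i},S_{i}]=S_{i}^{(1)}$. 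Lemma \ref{lem:B=00003D=00005BB,=00005BB,A=00005D=00005D if A is simple Artinian} applies to each simple component (which is a simple Artinian ring) and yields $B_{i}=[B_{i},[B_{i},S_{i}^{(1)}]]$. Since $[S_{i},S_{i}]\subseteq[A,A]=L$ and $B_{i}\subseteq B$, this gives
\[
B_{i}=[B_{i},[B_{i},S_{i}^{(1)}]]\subseteq[B,[B,L]]
\]
for every $i\in I$, and summing over $i$ yields $B\subseteq[B,[B,L]]$, as required.

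The case $k=0$ is handled by reducing it to the previous one. Here $L=A^{(-)}$ and $B$ is a Jordan-Lie inner ideal of $A^{(-)}$; by Lemma \ref{lem:If A semi, B of A(-) =000026 =00005BA,A=00005D }, $B$ is automatically contained in $[A,A]$ and is a Jordan-Lie inner ideal of $[A,A]$. Applying the previous case to $B$ as an inner ideal of $[A,A]$ gives $B=[B,[B,[A,A]]]$, and since $[A,A]\subseteq A=L$ (as Lie algebras) we obtain $B\subseteq[B,[B,L]]$, finishing the proof.

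There is really no obstacle here beyond bookkeeping; the only subtlety is making sure that Lemma \ref{lem:B=00003DB1+...} and the simple-component perfectness of Lemma \ref{lem:B=00003D=00005BB,=00005BB,A=00005D=00005D if A is simple Artinian} are the right ingredients, and that Proposition \ref{prop:=00005BA,A=00005D quasi Levi} collapses the derived series at step one so that the same argument covers every $k\ge1$ uniformly.
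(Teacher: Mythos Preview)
Your proof is correct and follows essentially the same approach as the paper: for $k\ge1$ you use Proposition \ref{prop:=00005BA,A=00005D quasi Levi} to reduce to $A^{(1)}$, then combine Lemma \ref{lem:B=00003DB1+...} with Lemma \ref{lem:B=00003D=00005BB,=00005BB,A=00005D=00005D if A is simple Artinian} on each simple component; for $k=0$ you reduce to the previous case via Lemma \ref{lem:If A semi, B of A(-) =000026 =00005BA,A=00005D }. This is exactly what the paper does, with the only difference being that you spell out the summation over components explicitly.
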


\begin{proof}
Suppose first that $k\ge1$. Then $A^{(k)}=A^{(1)}$ by Proposition
\ref{prop:=00005BA,A=00005D quasi Levi}. Therefore, this follows
from Lemma \ref{lem:B=00003DB1+...} and Lemma \ref{lem:B=00003D=00005BB,=00005BB,A=00005D=00005D if A is simple Artinian}. 

Suppose now that $k=0$. Let $B$ be a Jordan-Lie inner ideal of $A^{(-)}$.
Then by Lemma \ref{lem:If A semi, B of A(-) =000026 =00005BA,A=00005D },
$B$ be is Jordan-Lie inner ideal of $A^{(1)}$, so $B$ is $A^{(1)}$-perfect
by above. This obviously implies that $B$ is $A^{(-)}$-perfect. 
\end{proof}
\begin{lem}
\label{lem:B sub L^(infty)} Let $L$ be a Lie algebra and let $B$
be an inner ideal of $L$. If $B$ is $L$-perfect, then $B$ is an
inner ideal of $L^{(k)}$ for all $k\ge0$.
\end{lem}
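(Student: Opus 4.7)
The plan is to induct on $k$, using the $L$-perfectness hypothesis $B=[B,[B,L]]$ iteratively. There are two things to verify so that $B$ qualifies as an inner ideal of $L^{(k)}$: first, that $B\subseteq L^{(k)}$, and second, that $[B,[B,L^{(k)}]]\subseteq B$. The second is essentially immediate from the hypothesis, so the content is in the first.

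For the containment $B\subseteq L^{(k)}$, I would proceed by induction on $k$, the base case $k=0$ being trivial since $L^{(0)}=L$. For the inductive step, suppose $B\subseteq L^{(k)}$. Since $L^{(k)}$ is an ideal of $L$ (a standard fact about the derived series, verified by a separate short induction), we have $[L^{(k)},L]\subseteq L^{(k)}$, and therefore
\[
B \;=\; [B,[B,L]] \;\subseteq\; [L^{(k)},[L^{(k)},L]] \;\subseteq\; [L^{(k)},L^{(k)}] \;=\; L^{(k+1)},
\]
as required. In particular this uses the equality $B=[B,[B,L]]$ (not just inclusion), which is exactly the $L$-perfectness assumption.

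Once $B\subseteq L^{(k)}$ is established, the inner ideal property for $L^{(k)}$ follows because $L^{(k)}\subseteq L$ gives
\[
[B,[B,L^{(k)}]] \;\subseteq\; [B,[B,L]] \;=\; B.
\]
Hence $B$ is an inner ideal of $L^{(k)}$ for every $k\geq 0$.

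There is no real obstacle here; the argument is a direct unpacking of the definitions together with the standard fact that the terms of the derived series are ideals of $L$. The only subtlety worth flagging is that the inductive step genuinely uses the equality in $B=[B,[B,L]]$ rather than the one-sided containment $[B,[B,L]]\subseteq B$ coming from $B$ being an inner ideal; this is precisely why the $L$-perfectness hypothesis is needed.
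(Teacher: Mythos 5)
Your argument is correct and is essentially the paper's own proof: the same induction on $k$ with the chain $B=[B,[B,L]]\subseteq[L^{(k)},[L^{(k)},L]]\subseteq L^{(k+1)}$. The only difference is that you spell out the final step $[B,[B,L^{(k)}]]\subseteq[B,[B,L]]=B$, which the paper leaves implicit.
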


\begin{proof}
Suppose $B\subseteq L^{(k)}$ for some $k\ge0$. Then 
\[
B=[B,[B,L]]\subseteq[L^{(k)},[L^{(k)},L]]\subseteq[L^{(k)},L^{(k)}]=L^{(k+1)},
\]
so the result follows by induction on $k$.
\end{proof}
\begin{lem}
\label{lem:B sub P(A)} Let $B$ be an $L$-perfect Jordan-Lie inner
ideal of $L=A^{(k)}$ ($k\ge0$). If $p\ne2$ then $B\subseteq\ccP_{1}(A)$
and $B$ is a Jordan-Lie inner ideal of $\ccP_{1}(A)^{(1)}$. 
\end{lem}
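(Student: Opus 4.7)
The plan is to use the $L$-perfectness of $B$ to force $B$ into $A^{(\infty)}$, and then invoke Lemma \ref{lem:A^(k)=00003Dp(A)} to identify $A^{(\infty)}$ with $\ccP_{1}(A)^{(1)}$; the inner ideal condition at the new algebra level will then come for free because $\ccP_{1}(A)^{(1)}$ sits inside $L$.

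First I would observe that, since $B$ is $L$-perfect, Lemma \ref{lem:B sub L^(infty)} applied inductively gives $B\subseteq L^{(m)}$ for all $m\ge 0$. Since $L^{(m)}=A^{(k+m)}$, this forces $B\subseteq\bigcap_{m\ge 0}A^{(k+m)}=A^{(\infty)}$. Now Lemma \ref{lem:A^(k)=00003Dp(A)} (which uses $p\ne 2$) identifies $A^{(\infty)}$ with $\ccP_{1}(A)^{(1)}$, so
\[
B\;\subseteq\;\ccP_{1}(A)^{(1)}\;\subseteq\;\ccP_{1}(A),
\]
which is the first half of the claim.

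For the second half, I need $B$ to be a Jordan-Lie inner ideal of $\ccP_{1}(A)^{(1)}$. The condition $B^{2}=0$ is inherited from the Jordan-Lie hypothesis, so it suffices to check $[B,[B,\ccP_{1}(A)^{(1)}]]\subseteq B$. But $\ccP_{1}(A)^{(1)}=A^{(\infty)}\subseteq A^{(k)}=L$ (since the derived series stabilises and $A^{(\infty)}$ lies in every $A^{(k)}$), so
\[
[B,[B,\ccP_{1}(A)^{(1)}]]\;\subseteq\;[B,[B,L]]\;\subseteq\;B,
\]
the last inclusion because $B$ is an inner ideal of $L$. Thus $B$ is an inner ideal, hence a Jordan-Lie inner ideal, of $\ccP_{1}(A)^{(1)}$.

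There is essentially no obstacle here: once Lemma \ref{lem:B sub L^(infty)} and Lemma \ref{lem:A^(k)=00003Dp(A)} are available, the argument is a two-line chain of inclusions, and the only place the hypothesis $p\ne 2$ is used is to invoke Lemma \ref{lem:A^(k)=00003Dp(A)} (which in turn depends on Theorem \ref{BaranovLiePerfect} and Proposition \ref{1pproperties}(iv)).
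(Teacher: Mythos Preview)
Your proof is correct and follows essentially the same route as the paper: both use Lemma~\ref{lem:B sub L^(infty)} to push $B$ into $A^{(\infty)}$ and then Lemma~\ref{lem:A^(k)=00003Dp(A)} to identify $A^{(\infty)}=\ccP_{1}(A)^{(1)}$. The paper's version is simply more terse, leaving the inner-ideal verification at the level of $\ccP_{1}(A)^{(1)}$ implicit where you spell it out.
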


\begin{proof}
Since $B$ is $L$-perfect, by Lemma \ref{lem:B sub L^(infty)}, $B\subseteq L^{(\infty)}=A^{(\infty)}$,
so $B$ is a Jordan-Lie inner ideal of $A^{(\infty)}$. It remains
to note that $A^{(\infty)}=\ccP_{1}(A)^{(1)}$ by Lemma \ref{lem:A^(k)=00003Dp(A)}. 
\end{proof}

\subsection*{The core of inner ideals}

Let $B$ be an inner ideal of $L$. Then $[B,[B,L]]\subseteq B$.
It is well known that $[B,[B,L]]$ is an inner ideal of $L$ (see
for example \cite[Lemma 1.1]{Benkart}). Put $B_{0}=B$ and consider
the following inner ideals of $L$: 
\begin{equation}
B_{n}=[B_{n-1},[B_{n-1},L]]\subseteq B_{n-1}\quad\text{for}\quad n\geq1.\label{eq:Bn}
\end{equation}
Then $B=B_{0}\supseteq B_{1}\supseteq B_{2}\supseteq\ldots$. As $L$
is finite dimensional, this series terminates. This motivates the
following definition. 
\begin{defn}
\label{def:core} Let $L$ be a finite dimensional Lie algebra and
let $B$ be an inner ideal of $L$. Then there is an integer $n$
such that $B_{n}=B_{n+1}$. We say that $B_{n}$ is \emph{the} \emph{core}
\emph{of} $B$, denoted by $\core_{L}(B)$. 
\end{defn}

\begin{lem}
\label{lem:cor is perfect} Let $L$ be a finite dimensional Lie algebra
and let $B$ be an inner ideal of $L$. Then

(i) $\core_{L}(B)$ is $L$-perfect;

(ii) $B$ is $L$-perfect if and only if $B=\core_{L}(B)$;

(iii) $\core_{L}(B)$ is an inner ideal of $L^{(k)}$ for all $k\ge0$.
\end{lem}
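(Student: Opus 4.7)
\begin{proof}
All three parts follow quickly from the definitions and from results already established.

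\textbf{(i)} By construction the chain $B=B_{0}\supseteq B_{1}\supseteq B_{2}\supseteq\ldots$ is a descending chain of subspaces of the finite dimensional space $L$, so it stabilises: there is some $n$ with $B_{n}=B_{n+1}$, and $\core_{L}(B)=B_{n}$ by Definition \ref{def:core}. Unfolding (\ref{eq:Bn}) at the stabilising index gives
\[
\core_{L}(B)=B_{n}=B_{n+1}=[B_{n},[B_{n},L]]=[\core_{L}(B),[\core_{L}(B),L]],
\]
which is exactly the condition that $\core_{L}(B)$ is $L$-perfect.

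\textbf{(ii)} If $B$ is $L$-perfect, then $B_{1}=[B,[B,L]]=B=B_{0}$, and an easy induction on $n$ using (\ref{eq:Bn}) shows $B_{n}=B$ for all $n\ge0$; hence $\core_{L}(B)=B$. Conversely, suppose $B=\core_{L}(B)$. Then $B=B_{n}$ for the stabilising index $n$, but the chain is descending and starts at $B_{0}=B$, so $B_{0}=B_{1}$; that is, $B=[B,[B,L]]$, which is precisely the $L$-perfectness of $B$.

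\textbf{(iii)} By part (i), $\core_{L}(B)$ is $L$-perfect, and it is an inner ideal of $L$ (being $B_{n}$, each $B_{i}$ is an inner ideal of $L$ by the standard fact quoted before Definition \ref{def:core}). Applying Lemma \ref{lem:B sub L^(infty)} to $\core_{L}(B)$ yields that $\core_{L}(B)$ is an inner ideal of $L^{(k)}$ for every $k\ge0$.
\end{proof}

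The only subtlety is in the converse of (ii): one must use that the chain $(B_{i})$ is descending, so that equality between $B_{0}$ and some later $B_{n}$ forces equality at every intermediate step, and in particular $B=B_{1}$. The rest is bookkeeping. I expect no real obstacle; the content of the lemma is essentially a repackaging of the stabilisation argument together with the previously proved Lemma \ref{lem:B sub L^(infty)}.
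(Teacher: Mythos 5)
Your proof is correct and follows the same route as the paper, which simply notes that (i) and (ii) are immediate from the definitions of $L$-perfectness and the core (via the stabilisation $B_{n}=B_{n+1}$ of the descending chain), and that (iii) follows from (i) together with Lemma \ref{lem:B sub L^(infty)}. You have merely written out the details the paper leaves implicit, including the correct observation that in the converse of (ii) the descending chain forces $B_{0}=B_{1}$.
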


\begin{proof}
(i) and (ii) follow from Definitions \ref{def:Perfect} and \ref{def:core}. 

(iii) follows from (i) and Lemma \ref{lem:B sub L^(infty)}.
\end{proof}
\begin{rem}
Let $k\ge0$. If $S$ is a Levi subalgebra of $A$, then $A=S\oplus R$,
so $A^{(k)}=S^{(k)}\oplus N$, where $N=R\cap A^{(k)}$. Moreover,
$\bar{A}^{(k)}=A^{(k)}/N=A^{(k)}/R\cap A^{(k)}$ is the image of $A^{(k)}$
in $\text{\ensuremath{\bar{A}=A/R}}$. 
\end{rem}

\begin{lem}
\label{B=00003Dcor(B)} Let $B$ be a Jordan-Lie inner ideal of $L=A^{(k)}$
($k\ge0$). If $p\neq2,3$, then 

(i) $\bar{B}=\overline{\core_{L}(B)}$.

(ii) If $\core_{L}(B)=0$, then $B\subset N$.
\end{lem}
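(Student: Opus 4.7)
The plan is to deduce (ii) from (i) and then establish (i) by running the defining chain of the core after passing to the semisimple quotient $\bar{A}$. Indeed, if $\core_{L}(B)=0$, then assuming (i) we get $\bar{B}=\overline{\core_{L}(B)}=0$, which by definition of $\bar{B}$ means $B\subseteq N=R\cap L$, giving (ii).

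For (i), consider the descending chain of inner ideals $B=B_{0}\supseteq B_{1}\supseteq\dots$ defined by $B_{n}=[B_{n-1},[B_{n-1},L]]$ as in (\ref{eq:Bn}), which stabilises at $\core_{L}(B)$. Write $\bar{B}_{n}$ for the image of $B_{n}$ in $\bar{L}=(L+R)/R$. Since taking images commutes with the Lie bracket, $\bar{B}_{n}=[\bar{B}_{n-1},[\bar{B}_{n-1},\bar{L}]]$ for all $n\ge1$. Hence it suffices to prove that $\bar{B}_{n}=\bar{B}$ for all $n$; equivalently, that $\bar{B}$ is $\bar{L}$-perfect.

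Next I would identify $\bar{L}$ with $\bar{A}^{(k)}$. For $k=0$ this is immediate, and for $k\ge1$ it follows by induction from the fact that the natural projection $A\to\bar{A}$ sends derived terms to derived terms, so $\bar{L}=\overline{A^{(k)}}=\bar{A}^{(k)}$. Since $\bar{A}$ is semisimple and $B^{2}=0$ implies $\bar{B}^{2}=0$, by Lemma \ref{lem:Bav=000026Row 2.16}(ii) the image $\bar{B}$ is a Jordan-Lie inner ideal of $\bar{L}=\bar{A}^{(k)}$. Now Lemma \ref{lem:B is L-perfect if A is semi} (which requires $p\neq2,3$) applies and gives $\bar{B}=[\bar{B},[\bar{B},\bar{L}]]=\bar{B}_{1}$. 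By induction $\bar{B}_{n}=\bar{B}$ for every $n$, whence $\overline{\core_{L}(B)}=\bar{B}$, proving (i).

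There is no serious obstacle here beyond the clean bookkeeping needed to identify $\bar{L}$ with $\bar{A}^{(k)}$ and to check that the projection to $\bar{L}$ intertwines the inner-ideal operator $B\mapsto[B,[B,\cdot]]$; the substantive input is Lemma \ref{lem:B is L-perfect if A is semi}, which supplies $L$-perfectness in the semisimple quotient and is precisely where the hypothesis $p\neq2,3$ is used.
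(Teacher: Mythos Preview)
Your proof is correct and follows essentially the same route as the paper's: both reduce (ii) to (i), then prove (i) by observing that $\bar{B}$ is a Jordan-Lie inner ideal of $\bar{L}=\bar{A}^{(k)}$ and invoking Lemma~\ref{lem:B is L-perfect if A is semi} to get $\bar{L}$-perfectness, whence the descending chain defining the core has constant image $\bar{B}$. The paper's write-up is slightly more compressed, packaging your inductive chain argument as the single identity $\bar{B}=\core_{\bar{L}}(\bar{B})=\overline{\core_{L}(B)}$ via Lemma~\ref{lem:cor is perfect}(ii).
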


\begin{proof}
(i) Since $\bar{A}$ is semisimple and $\bar{B}$ is a Jordan-Lie
inner ideal of $\bar{L}=\bar{A}^{(k)}$, by Lemma \ref{lem:B is L-perfect if A is semi},
$\bar{B}$ is $\bar{L}$-perfect. Hence, by Lemma \ref{lem:cor is perfect},
$\bar{B}=\core_{\bar{L}}(\bar{B})=\overline{\core_{L}(B)}$.

(ii) This follows from (i).
\end{proof}

\section{Bar-minimal and regular inner ideals}

Recall that $L=A^{(k)}$ for some $k\ge0$, $N=R\cap L$, and $\bar{B}$
is the image of a subspace $B$ of $L$ in $\bar{L}=L+R/R\cong L/N$. 

\subsection*{Bar-minimal inner ideals}
\begin{defn}
\label{def:minimal} Let $L=A^{(k)}$ and let $X$ be an inner ideal
of $\bar{L}$. Suppose that $B$ is an inner ideal of $L$. We say
that $B$ is \emph{$X$-minimal} (or simply, \emph{bar-minimal})\emph{
}if $\bar{B}=X$ and for every inner ideal $B'$ of $L$ with $\bar{B}'=X$
and $B'\subseteq B$ one has $B'=B$. 
\end{defn}

\begin{lem}
\label{lem: B =00003D =00005BB,=00005BB,L=00005D=00005D } Let $k\ge0$
and let $B$ be a Jordan-Lie inner ideal of $L=A^{(k)}$. Suppose
that $B$ is bar-minimal and $p\neq2,3$. Then the following hold. 

(i) $B=\core_{L}B$. 

(ii) $B$ is $L$-perfect.

(iii) $B$ is a Jordan-Lie inner ideal of $L^{(m)}=A^{(k+m)}$ for
all $m\ge0$.
\end{lem}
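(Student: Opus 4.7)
The plan is to assemble three results already established earlier in the paper, with essentially no new computation required.

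For part (i), I would start from the observation that $\core_{L}(B)$ is, by its iterative construction in (\ref{eq:Bn}), an inner ideal of $L$ contained in $B$. Since $B$ is a Jordan-Lie inner ideal of $L=A^{(k)}$ and $p\neq 2,3$, Lemma \ref{B=00003Dcor(B)}(i) yields $\overline{\core_{L}(B)}=\bar{B}$. Thus $\core_{L}(B)$ is an inner ideal of $L$ with image $\bar{B}$ in $\bar{L}$ and $\core_{L}(B)\subseteq B$. Applying the bar-minimality of $B$ with $X=\bar{B}$ forces $\core_{L}(B)=B$, as required.

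Part (ii) is then immediate from Lemma \ref{lem:cor is perfect}(ii): an inner ideal is $L$-perfect if and only if it equals its core.

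For part (iii), once $B$ is known to be $L$-perfect, Lemma \ref{lem:B sub L^(infty)} shows that $B$ is an inner ideal of $L^{(m)}$ for every $m\ge 0$. Since $L=A^{(k)}$, one has $L^{(m)}=A^{(k+m)}$, and the Jordan-Lie condition $B^{2}=0$ is a property of $B$ itself and so is preserved. Therefore $B$ is a Jordan-Lie inner ideal of $A^{(k+m)}$ for every $m\ge 0$.

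There is no genuine obstacle here: the entire argument is a reshuffling of the machinery developed in Sections 5 and the opening of Section 6. The only subtlety worth flagging in the write-up is verifying that $\core_{L}(B)$ legitimately qualifies as a competitor in the bar-minimality comparison, namely that it is an inner ideal of $L$ sitting inside $B$ with image exactly $\bar{B}$; this is precisely what Lemma \ref{B=00003Dcor(B)}(i) supplies.
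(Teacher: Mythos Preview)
Your proof is correct and follows essentially the same approach as the paper's own proof: both use Lemma \ref{B=00003Dcor(B)}(i) together with bar-minimality for part (i), then invoke Lemma \ref{lem:cor is perfect} (you cite part (ii), the paper cites part (i), but these are equivalent here) and Lemma \ref{lem:B sub L^(infty)} for parts (ii) and (iii). Your explicit remark that the Jordan-Lie condition $B^{2}=0$ persists trivially is a small clarification the paper leaves implicit.
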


\begin{proof}
(i) By definition of the core, $\core_{L}(B)$ is an inner ideal of
$L$ contained in $B$. By Lemma \ref{B=00003Dcor(B)}(i), $\overline{\core_{L}(B)}=\bar{B}$.
Since $B$ is bar-minimal, we have $B=\core_{L}B$. 

(ii) This follows directly from (i) and Lemma \ref{lem:cor is perfect}(i). 

(iii) This follows from (ii) and Lemma \ref{lem:B sub L^(infty)}. 
\end{proof}
Recall that a Lie algebra $L$ is said to be perfect if $L=[L,L]$.
We will need the following result.
\begin{lem}
\label{lem:B=00003DB1+B2 if L=00003DL1+L2} Let $L$ be a perfect
Lie algebra and let $B$ be an $L$-perfect inner ideal of $L$. Suppose
that $L=\bigoplus_{i\in I}L_{i}$, where each $L_{i}$ is an ideal
of $L$. Then $B=\bigoplus_{i\in I}B_{i}$, where $B_{i}=B\cap L_{i}$.
Moreover, if $L=A^{(k)}$ ($k\ge0$) and $B$ is bar-minimal then
$B_{i}$ is a $\bar{B}_{i}$-minimal inner ideal of $L_{i}$, for
all $i\in I$.
\end{lem}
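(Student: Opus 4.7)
The plan is to use the $L$-perfectness of $B$ to force each projection $\pi_i\colon L\to L_i$ to land in $B_i$, which will give the direct sum decomposition; then the ``moreover'' part will follow by padding any candidate $B'_i$ with the remaining $B_j$ and invoking bar-minimality of $B$ in $L$.

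Since $L=\bigoplus_{i\in I}L_i$ is a direct sum of ideals, $[L_j,L_l]=0$ for $j\ne l$ and each projection $\pi_i$ is a Lie algebra homomorphism. To show $\pi_i(B)\subseteq B_i$ (the reverse inclusion being trivial), I would take $b\in B$ and, using $B=[B,[B,L]]$, write $b=\sum_k[b^{(k)},[c^{(k)},x^{(k)}]]$ with $b^{(k)},c^{(k)}\in B$ and $x^{(k)}\in L$. The key trick is to replace each $x^{(k)}$ by $\pi_i(x^{(k)})\in L_i\subseteq L$: the element
\[
y_k:=[b^{(k)},[c^{(k)},\pi_i(x^{(k)})]]
\]
lies in $[B,[B,L]]\subseteq B$ by the inner ideal property, while the commutation rule $[L_j,L_i]=0$ for $j\ne i$ lets me push both outer brackets inside the projection, giving $y_k=[\pi_i(b^{(k)}),[\pi_i(c^{(k)}),\pi_i(x^{(k)})]]=\pi_i([b^{(k)},[c^{(k)},x^{(k)}]])$. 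Hence $y_k\in B\cap L_i=B_i$, and summing over $k$ yields $\pi_i(b)\in B_i$. It follows that $B=\sum_i\pi_i(B)=\sum_i B_i=\bigoplus_i B_i$, with each $B_i$ automatically an inner ideal of $L_i$ by Lemma \ref{lem:Bav=000026Row 2.16}(i).

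For the ``moreover'' part, assume $L=A^{(k)}$ and $B$ is bar-minimal, fix $i\in I$, and let $B'_i$ be any inner ideal of $L_i$ with $B'_i\subseteq B_i$ and $\bar{B}'_i=\bar{B}_i$. Setting $B'_j:=B_j$ for $j\ne i$, I would form $B':=\bigoplus_{j\in I}B'_j\subseteq L$ and check three properties. First, $B'$ is an inner ideal of $L$: expanding $[B',[B',L]]$ componentwise and using $[L_j,L_l]=0$ for $j\ne l$ reduces it to $\sum_j[B'_j,[B'_j,L_j]]\subseteq\sum_j B'_j=B'$. Second, $B'\subseteq B$, since $B'_i\subseteq B_i$ and $B'_j=B_j$ for $j\ne i$. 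Third, $\bar{B}'=\bar{B}$, because the quotient map $L\to\bar{L}$ sends $B'$ to $\sum_j\bar{B}'_j=\sum_j\bar{B}_j=\bar{B}$, using the hypothesis $\bar{B}'_i=\bar{B}_i$ precisely at $j=i$. Bar-minimality of $B$ in $L$ then forces $B'=B$, and comparing $i$-components yields $B'_i=B_i$.

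The calculations are essentially structural, so no single step poses a serious obstacle; the one point that deserves care is the $\pi_i$-equivariance used above, which depends crucially on the identity $[L_j,L_l]=0$ and hence on $L$ being a direct sum of \emph{ideals} (not merely subalgebras). The perfectness hypothesis on $L$ is in fact not strictly needed in this argument, but is natural in the applications where $L=A^{(k)}$ is a sum of simple ideals modulo its nil-radical.
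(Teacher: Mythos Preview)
Your proof is correct and follows essentially the same approach as the paper. The paper's argument for the first part is slightly more streamlined: instead of writing elements of $B$ as finite sums and tracking them through projections, it observes directly that $[B,[B,L_i]]\subseteq B\cap L_i=B_i$ (since $L_i$ is an ideal and $B$ is an inner ideal), whence $B=[B,[B,L]]=\sum_i[B,[B,L_i]]\subseteq\sum_i B_i\subseteq B$; this is precisely your projection argument compressed into one line. For the ``moreover'' part the paper simply writes ``clearly'', and your padding argument (setting $B'_j=B_j$ for $j\ne i$ and invoking bar-minimality of $B$) is exactly what is meant. Your remark that perfectness of $L$ is not actually used is also correct: only the $L$-perfectness of $B$ enters.
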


\begin{proof}
Note that $[B,[B,L_{i}]]\subseteq B\cap L_{i}=B_{i}$, for all $i\in I$.
Therefore, 
\[
B=[B,[B,L]]=\sum_{i\in I}[B,[B,L_{i}]]\subseteq\sum_{i\in I}B_{i}\subseteq B,
\]
so $B=\sum_{i\in I}B_{i}$. As $B_{i}\cap B_{j}\subseteq L_{i}\cap L_{j}=0$
for all $i\neq j$, $B=\bigoplus_{i\in I}B_{i}$. Clearly, if $B$
is bar-minimal, then each $B_{i}$ is $\bar{B}_{i}$-minimal. 
\end{proof}

\subsection*{Split inner ideals }

Let $L$ be a Lie algebra and let $Q$ be a subalgebra of $L$. Recall
that $Q$ is said to be a quasi Levi subalgebra of $L$ if $Q$ is
quasi semisimple and there is a solvable ideal $P$ of $L$ such that
$L=Q\oplus P$.
\begin{defn}
\label{def:splitness-1} Let $L$ be a finite dimensional Lie algebra
and let $B$ be a subspace of $L$. Suppose that there is a quasi
Levi decomposition $L=Q\oplus N$ of $L$ such that $B=B_{Q}\oplus B_{N}$,
where $B_{Q}=B\cap Q$ and $B_{N}=B\cap N$. Then we say that $B$
\emph{splits} \emph{in} $L$ and $Q$ is a $B$\emph{-splitting} \emph{quasi
Levi subalgebra }of $L$. 
\end{defn}

\begin{defn}
\label{def:splitness} Let $B$ be a subspace of $A$. Suppose that
there is a Levi subalgebra $S$ of $A$ such that $B=B_{S}\oplus B_{R}$,
where $B_{S}=B\cap S$ and $B_{R}=B\cap R$. Then we say that $B$
\emph{splits} in $A$ and $S$ is a $B$\emph{-splitting} Levi subalgebra\emph{
}of $A$.
\end{defn}

\begin{lem}
\label{lem:If split in A, then split in L} Let $L=A^{(k)}$ ($k\ge1$)
and let $B$ be a subspace of $L$. Suppose $p\ne2$. If $B$ splits
in $A$, then $B$ splits in $L$. 
\end{lem}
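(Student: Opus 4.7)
The plan is to promote the given splitting of $B$ in $A$ to a splitting in $L$ by using the quasi Levi decomposition $L = Q \oplus N$, where $Q = [S,S]$ and $N = R \cap L$.

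First, I would verify that $L = A^{(k)}$ does admit this decomposition for every $k \ge 1$. The case $k=1$ is exactly Proposition \ref{prop:A is strongly L=00003D=00005BA,A=00005D}. For $k \ge 2$, I rely on the fact that $Q = [S,S]$ is quasi semisimple, hence perfect, by Proposition \ref{prop:=00005BA,A=00005D quasi Levi}; so $Q = [Q,Q]$, which gives $Q \subseteq A^{(k)}$ by induction on $k$. Expanding $A^{(k)} = [A^{(k-1)}, A^{(k-1)}] = [Q \oplus N', Q \oplus N']$ with $N' = R \cap A^{(k-1)}$ and noting that every bracket involving $N' \subseteq R$ stays in $R$ (as $R$ is an ideal of $A$), one gets $A^{(k)} = Q + (R \cap A^{(k)})$. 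The sum is direct because $Q \subseteq S$ and $S \cap R = 0$. Thus $L = Q \oplus N$ is a quasi Levi decomposition of $L$.

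Next, I would use the hypothesis $B = B_S \oplus B_R$ with $B_S = B \cap S$ and $B_R = B \cap R$ to show $B = (B \cap Q) \oplus (B \cap N)$. Since $B \subseteq L$, we have $B_R = B \cap R \subseteq L \cap R = N$, so $B_R \subseteq B \cap N$. For $B_S$, any $x \in B_S \subseteq S \cap L$ decomposes as $x = q + n$ with $q \in Q$ and $n \in N \subseteq R$; then $n = x - q \in S$ forces $n \in S \cap R = 0$, so $x = q \in Q$. Hence $B_S \subseteq B \cap Q$. Combining these inclusions with $Q \cap N = 0$ gives
\[
B = B_S + B_R \subseteq (B \cap Q) + (B \cap N) \subseteq B,
\]
and the sum on the right is direct, so $B$ splits in $L$ with $Q$ as a $B$-splitting quasi Levi subalgebra.

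The only point needing any care is the inductive identification $A^{(k)} = Q \oplus (R \cap A^{(k)})$ for $k \ge 2$; the rest is formal. The restriction $p \ne 2$ enters only via Proposition \ref{prop:=00005BA,A=00005D quasi Levi}, which ensures $Q$ is perfect and quasi semisimple. Note that no inner ideal property of $B$ is used, so the same argument in fact works for any subspace $B \subseteq L$ that splits in $A$.
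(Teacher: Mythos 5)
Your proof is correct and follows essentially the same route as the paper's: both pass to the quasi Levi decomposition $L=Q\oplus N$ with $Q=[S,S]$ and $N=R\cap L$ and then check $B_{S}\subseteq Q$, $B_{R}\subseteq N$. The paper simply asserts the decomposition $A^{(k)}=S^{(k)}\oplus N$ and the inclusion $B_{S}\subseteq Q$ as "easy to see", whereas you supply the inductive verification; no discrepancy.
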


\begin{proof}
Suppose that $B$ splits in $A$. Then there is a $B$-splitting Levi
subalgebra $S$ of $A$ such that $B=B_{S}\oplus B_{R}$, where $B_{S}=B\cap S$
and $B_{R}=B\cap R$. Clearly, $Q=[S,S]=S^{(k)}$ is a quasi semisimple
subalgebra of $L$, $N=L\cap R$ is a solvable ideal of $L$, and
$L=Q\oplus N$ is a quasi Levi decomposition of $L$. It is easy to
see that $B_{S}\subseteq Q$ and $B_{R}\subseteq N$, so $B$ splits
in $L$. 
\end{proof}
\begin{lem}
\label{lem:eAf splits} Let $B$ be an inner ideal of $L=A^{(k)}$
($k\ge0$). Suppose $B=eAf$ for some orthogonal idempotents $e$
and $f$ of $A$. Then (i) $B$ splits in $A$ and (ii) if $k\ge1$
then $B$ splits in $L$. 
\end{lem}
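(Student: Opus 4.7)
The plan is to find a Levi subalgebra $S$ of $A$ that contains both $e$ and $f$; once such an $S$ is in hand, the decomposition $B=(B\cap S)\oplus (B\cap R)$ follows almost immediately from $A=S\oplus R$, and part (ii) reduces to Lemma \ref{lem:If split in A, then split in L}.

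To construct $S$, fix any Levi decomposition $A=S_{0}\oplus R$. The images $\bar{e},\bar{f}$ are orthogonal idempotents of $\bar{A}=A/R$, and since the projection $S_{0}\to\bar{A}$ is an algebra isomorphism there are unique orthogonal idempotents $e_{0},f_{0}\in S_{0}$ lifting $\bar{e},\bar{f}$. By the Wedderburn--Malcev theorem on conjugacy of lifts of orthogonal systems of idempotents (which applies since $\bar{A}$ is separable over the algebraically closed field $\bbF$), there exists $u=1+r\in\hat{A}$ with $r\in R$ such that $u e_{0}u^{-1}=e$ and $u f_{0}u^{-1}=f$. Set $S:=uS_{0}u^{-1}$. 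Because conjugation by $u$ is an algebra automorphism of $\hat{A}$ and $R$ is a two-sided ideal (so $uRu^{-1}=R$), $S$ is a semisimple subalgebra of $A$ with $A=S\oplus R$; thus $S$ is a Levi subalgebra of $A$, and $e,f\in S$ by construction.

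Using $A=S\oplus R$, together with $e,f\in S$ and the fact that $R$ is a two-sided ideal of $A$, one gets
\[
eAf=eSf+eRf,\qquad eSf\subseteq S,\qquad eRf\subseteq R,
\]
and the sum is direct because $S\cap R=0$. Moreover, if $x\in eAf\cap S$ and we write $x=esf+erf$ with $esf\in S$ and $erf\in R$, then $erf=x-esf\in S\cap R=0$, so $eAf\cap S=eSf$; the same argument gives $eAf\cap R=eRf$. Hence $B=(B\cap S)\oplus(B\cap R)$, proving (i). Part (ii) follows from (i) together with Lemma \ref{lem:If split in A, then split in L}. The only substantive ingredient in the argument is the simultaneous Wedderburn--Malcev conjugation moving the orthogonal pair $(e_{0},f_{0})$ onto $(e,f)$; everything after that is a routine verification using $S\cap R=0$.
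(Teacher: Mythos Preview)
Your proof is correct and follows the same approach as the paper: both invoke the Wedderburn--Malcev theorem to produce a Levi subalgebra $S$ containing the orthogonal idempotents $e$ and $f$, then decompose $eAf=eSf\oplus eRf$, and derive (ii) from (i) via Lemma~\ref{lem:If split in A, then split in L}. The only difference is that you spell out the conjugation argument explicitly (lifting $(\bar e,\bar f)$ to $(e_0,f_0)\in S_0$ and conjugating by a unit $1+r$), whereas the paper simply cites Wedderburn--Malcev in one line.
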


\begin{proof}
(i) Since $e$ and $f$ are orthogonal, By Wedderburn-Malcev theorem
there is a Levi subalgebra $S$ of $A$ such that $e,f\in S$. Thus,
$B=eAf=e(S\oplus R)f=eSf\oplus eRf$ as required.

(ii) This follows directly from (i) and Lemma \ref{lem:If split in A, then split in L}.
\end{proof}
\begin{prop}
\label{prop:C sub B splits in A} Let $C\subseteq B$ be subspaces
of $A$ such that $\bar{C}=\bar{B}$. If $C$ splits in $A$, then
$B$ splits in $A$.
\end{prop}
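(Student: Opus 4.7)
The plan is to show that the very same Levi subalgebra $S$ which splits $C$ already splits $B$. Write $C=C_{S}\oplus C_{R}$ with $C_{S}=C\cap S$ and $C_{R}=C\cap R$. The hypothesis $\bar{C}=\bar{B}$ together with $C\subseteq B$ gives $B\subseteq C+R$, and since the projection $\pi\colon A\to\bar{A}=A/R$ restricts to an isomorphism $S\cong\bar{A}$, the $S$-component (in the Wedderburn decomposition $A=S\oplus R$) of any element of $C$ coincides with the $S$-component of its image in $\bar{A}$.

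Now fix $b\in B$. Since $\bar{b}\in\bar{B}=\bar{C}$, pick $c\in C$ with $\bar{c}=\bar{b}$, so $b-c\in R$. Decompose $c=c_{S}+c_{R}$ with $c_{S}\in C_{S}\subseteq C\subseteq B$ and $c_{R}\in C_{R}\subseteq R$. Because $S\cap R=0$, the Wedderburn decomposition of $b$ reads
\[
b=c_{S}+\bigl(c_{R}+(b-c)\bigr),\qquad c_{S}\in S,\ c_{R}+(b-c)\in R.
\]
The first summand lies in $B\cap S=B_{S}$ (it is in $S$ and in $B$), and the second summand then equals $b-c_{S}\in B\cap R=B_{R}$. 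Hence $B=B_{S}+B_{R}$; since $S\cap R=0$ this sum is automatically direct, so $B=B_{S}\oplus B_{R}$ and $S$ is a $B$-splitting Levi subalgebra of $A$.

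There is no real obstacle here: the argument is essentially a one-line consequence of the uniqueness of the Wedderburn decomposition combined with the fact that $C_{S}\subseteq B$, which is guaranteed by $C\subseteq B$. No assumption on $p$, on $k$, or on $B$ being an inner ideal is used—the statement is a purely linear-algebraic fact about subspaces of $A=S\oplus R$ whose images in $\bar{A}$ coincide.
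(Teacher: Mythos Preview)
Your proof is correct and follows essentially the same approach as the paper: both show that the very same Levi subalgebra $S$ which splits $C$ also splits $B$, the key point being that $C_S\subseteq B\cap S$. The paper phrases this as a dimension count via the chain $B_S\cong\bar{B}_S\subseteq\bar{B}=\bar{C}\cong C_S\subseteq B_S$ (forcing $\dim B_S=\dim B/B_R$), whereas you give the equivalent explicit element-wise decomposition $b=c_S+(b-c_S)$.
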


\begin{proof}
Suppose $C$ splits in $A$. Then there exists a Levi subalgebra $S$
of $A$ such that $C=C_{S}\oplus C_{R}$, where $C_{S}=C\cap S$ and
$C_{R}=C\cap R$. Put $B_{S}=B\cap S$ and $B_{R}=B\cap R$. Since
$\bar{C}=\bar{B}$, we have $B\subseteq C+R=C_{S}+R$ and $C_{S}=C\cap S\subseteq B_{S}\subseteq B$.
Now by Modular Law, 
\[
B=B\cap(C_{S}+R)\subseteq C_{S}+B\cap R\subseteq B_{S}+B_{R}\subseteq B.
\]
Thus, $B=B_{S}\oplus B_{R}$ as required.
\end{proof}
\begin{cor}
\label{cor:cor(B) splits} Let $L=A^{(k)}$ ($k\ge0$) and let $B$
be an inner ideal of $L$. Suppose that $p\ne2,3$. If $\core_{L}(B)$
splits in $A$, then $B$ splits in $A$.
\end{cor}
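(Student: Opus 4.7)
The plan is to deduce this corollary directly from Proposition \ref{prop:C sub B splits in A} by setting $C := \core_{L}(B)$. By hypothesis, $C$ splits in $A$, so the only thing that needs to be checked before invoking that proposition is the equality $\bar{C} = \bar{B}$ of images in $\bar{A} = A/R$.

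First I would note that $C \subseteq B$ is automatic from Definition \ref{def:core}. Next I would invoke Lemma \ref{B=00003Dcor(B)}(i) (which applies under the Jordan-Lie hypothesis on $B$, implicit in the ambient setting of this section) to obtain $\bar{B} = \overline{\core_{L}(B)} = \bar{C}$. With this equality in hand, Proposition \ref{prop:C sub B splits in A} applied to the pair $C \subseteq B$ immediately yields that $B$ splits in $A$, completing the argument.

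There is essentially no genuine obstacle here: the corollary is a formal repackaging of the two preceding results. The real substance lies in Lemma \ref{B=00003Dcor(B)}(i), whose proof in turn rested on the fact (Lemma \ref{lem:B is L-perfect if A is semi}) that in the semisimple quotient $\bar{L} = \bar{A}^{(k)}$ every Jordan-Lie inner ideal is $\bar{L}$-perfect when $p \ne 2,3$, so that $\bar{B}$ coincides with its own core in $\bar{L}$, which in turn equals the image $\overline{\core_{L}(B)}$. If one wanted a completely self-contained derivation, the only calculation to carry out would be to verify once again that the core operation commutes with the surjection $L \twoheadrightarrow \bar{L}$, i.e.\ that $\overline{[B_{n-1},[B_{n-1},L]]} = [\overline{B_{n-1}},[\overline{B_{n-1}},\bar{L}]]$ at each stage of the iteration defining the core; but this is immediate from the definition of the Lie bracket on the quotient.
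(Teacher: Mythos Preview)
Your proof is correct and follows exactly the same route as the paper: set $C=\core_{L}(B)$, use that $\bar{C}=\bar{B}$, and apply Proposition~\ref{prop:C sub B splits in A}. In fact you cite the right lemma for the equality $\bar{B}=\overline{\core_{L}(B)}$, namely Lemma~\ref{B=00003Dcor(B)}(i); the paper's own proof attributes this to Lemma~\ref{lem:cor is perfect}, which appears to be a misprint, since that lemma says nothing about images in $\bar{A}$. Your observation that the Jordan--Lie hypothesis on $B$ is needed (and only implicit in the statement) is also well taken, since Lemma~\ref{B=00003Dcor(B)} is stated for Jordan--Lie inner ideals.
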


\begin{proof}
By Lemma \ref{lem:cor is perfect}, $\overline{\core_{L}(B)}=\bar{B}$.
Since $\core_{L}(B)\subseteq B$ and $\core_{L}(B)$ splits, by Proposition
\ref{prop:C sub B splits in A}, $B$ splits. 
\end{proof}
\begin{defn}
\label{def:large } Let $G$ be a subalgebra of $A$. We say that
$G$ is \emph{large in $A$ }if $G+R=A$ (equivalently, there is a
Levi subalgebra $S$ of $A$ such that $S\subseteq G$; or equivalently,
$G/\rad G$ is isomorphic to $A/R$).
\end{defn}

\begin{rem}
\label{rem:rad_large} Let $G$ be a large subalgebra of $A$ and
let $B$ be a subspace of $\ccP_{1}(G)$. Then $\rad(G)=G\cap R$
and $\rad(\ccP_{1}(G))=\ccP_{1}(G)\cap\rad(G)=\ccP_{1}(G)\cap R$,
so the image $\bar{B}$ of $B$ in $A/R$ is isomorphic to the images
of $B$ in $G/\rad(G)$ and $\ccP_{1}(G)/\rad(\ccP_{1}(G))$, respectively.
Thus, we can use the same notation $\bar{B}$ for the images of $B$
in all these quotient spaces.
\end{rem}

\begin{prop}
\label{lem:Large is splits} Let $B$ be a subspace of $A$. Let $G$
be a large subalgebra of $A$ and let $C$ be a subspace of $\ccP_{1}(G)$.
Suppose that $C\subseteq B$, $\bar{C}=\bar{B}$, and $C$ splits
in $\ccP_{1}(G)$. Then $B$ splits in $A$.
\end{prop}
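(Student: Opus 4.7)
The plan is to lift a Levi subalgebra witnessing the splitness of $C$ inside $\ccP_1(G)$ to a Levi subalgebra of $A$ witnessing the splitness of $B$, and then read off the decomposition of $B$ from the decomposition of $C$. By hypothesis there is a Levi subalgebra $T$ of $\ccP_1(G)$ with $C=(C\cap T)\oplus(C\cap\rad(\ccP_1(G)))$. By Remark \ref{rem:rad_large}, $\rad(\ccP_1(G))=\ccP_1(G)\cap R\subseteq R$, while $T\cap R=0$ since $T$ is semisimple and $R$ is nilpotent. So the $T$-part of $C$ will feed into $B\cap S$ and the radical part of $C$ into $B\cap R$, as soon as we produce a Levi subalgebra $S$ of $A$ containing $T$.

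The main step, and the main obstacle, is producing such an $S$. The tool is the Wedderburn--Malcev theorem: $T+R$ is a subalgebra of $A$ whose radical is $R$ (because $(T+R)/R\cong T$ is semisimple), so $T$ is a Levi of $T+R$. Fix any Levi $S_0$ of $A$, and let $T'\subseteq S_0$ be the preimage of the image of $T$ in $A/R$ under $\pi|_{S_0}$; then $T'+R=T+R$, so $T$ and $T'$ are both Levi subalgebras of this common subalgebra and are conjugate by some $\exp(\ad r)$ with $r\in R$. Taking $S:=\exp(\ad r)(S_0)$ gives a Levi subalgebra of $A$ containing $T$.

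It remains to verify $B=(B\cap S)\oplus(B\cap R)$. The sum is direct since $S\cap R=0$. For the equality, $\bar{B}=\bar{C}$ gives $B+R=C+R$, so every $b\in B$ admits a decomposition $b=c+r$ with $c\in C\subseteq B$ and $r\in R$. Writing $c=c_T+c_P$ with $c_T\in C\cap T$ and $c_P\in C\cap\rad(\ccP_1(G))$, we have $c_T\in B\cap S$ (as $T\subseteq S$ and $c_T\in C\subseteq B$) and $b-c_T=c_P+r\in B\cap R$, so $b\in(B\cap S)+(B\cap R)$, completing the argument.
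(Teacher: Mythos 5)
Your proof is correct and follows essentially the same route as the paper: take the $C$-splitting Levi subalgebra $T$ of $\ccP_{1}(G)$, embed it in a Levi subalgebra $S$ of $A$, and then transfer the splitting from $C$ to $B$ using $\bar{C}=\bar{B}$. The only differences are that the paper simply cites the Wedderburn--Malcev theorem for the existence of $S\supseteq T$ and Proposition \ref{prop:C sub B splits in A} for the final transfer step, both of which you prove inline; your conjugacy argument is fine, except that in characteristic $p>0$ the conjugating automorphism should be written as $x\mapsto(1+r)x(1+r)^{-1}$ with $r\in R$ rather than $\exp(\ad r)$.
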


\begin{proof}
Put $R_{1}=\rad\ccP_{1}(G)$. By Remark \ref{rem:rad_large}, $R_{1}\subseteq\rad(G)\subseteq R$.
Let $S_{1}$ be a $C$-splitting Levi subalgebra of $\ccP_{1}(G)$,
so $C=C_{S_{1}}\oplus C_{R_{1}}$, where $C_{S_{1}}=C\cap S_{1}$
and $C_{R_{1}}=C\cap R_{1}$. Note that $S_{1}$ is a semisimple subalgebra
of $A$, so by Wedderburn-Malcev Theorem there is a Levi subalgebra
$S$ of $A$ such that $S_{1}\subseteq S$. Since $S_{1}\subseteq S$
and $R_{1}\subseteq R$, $C$ splits in $A$, so the result follows
from Proposition \ref{prop:C sub B splits in A}. 
\end{proof}
Since $A$ is large in $A$, we get the following corollary. 
\begin{cor}
\label{cor:CB splits} Let $B$ be a subspace of $A$ and let $C$
be a subspace of $\ccP_{1}(A)$. Suppose that $C\subseteq B$, $\bar{C}=\bar{B}$,
and $C$ splits in $\ccP_{1}(A)$. Then $B$ splits in $A$.
\end{cor}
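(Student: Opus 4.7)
The plan is to deduce this corollary as an immediate specialization of Proposition \ref{lem:Large is splits} applied with the choice $G = A$. By Definition \ref{def:large}, a subalgebra $G$ of $A$ is large precisely when $\bar{G} = \bar{A}$; taking $G = A$, this condition reads $\bar{A} = \bar{A}$, which is tautologically true. Hence $A$ is itself a large subalgebra of $A$, and in particular $\ccP_1(G) = \ccP_1(A)$ under this choice.

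With $G = A$ substituted, the hypotheses of Proposition \ref{lem:Large is splits} become exactly the hypotheses of the corollary: $C$ is a subspace of $\ccP_1(A)$, $C \subseteq B$, $\bar{C} = \bar{B}$, and $C$ splits in $\ccP_1(A)$. The conclusion of the proposition then gives that $B$ splits in $A$, which is exactly what we need to prove. No additional argument is required.

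There is essentially no obstacle to overcome here, since all the substantive work has already been done in Proposition \ref{lem:Large is splits} (which in turn invoked the Wedderburn--Malcev theorem and Proposition \ref{prop:C sub B splits in A}). The only minor bookkeeping point is the identification of the images in various quotients: the image $\bar{C}$ of $C \subseteq \ccP_1(A)$ in $A/R$ coincides with its image in $\ccP_1(A)/\rad \ccP_1(A)$, a harmless convention already fixed in Remark \ref{rem:rad_large} for the case of a general large subalgebra, and hence a fortiori for $G = A$.
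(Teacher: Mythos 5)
Your proof is correct and is exactly the paper's argument: the corollary is stated immediately after the remark ``Since $A$ is large in $A$,'' i.e.\ it is obtained by specializing Proposition \ref{lem:Large is splits} to $G=A$, precisely as you do.
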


\begin{prop}
\label{prop:Cbar} Let $B$ be a Jordan-Lie inner ideal of $L=A^{(k)}$
($k\ge0$). Let $G$ be a large subalgebra of $A$ and let $B'=B\cap G^{(k)}$.
Suppose $p\ne2,3$ and $\bar{B}'=\bar{B}$. Put $C=\core_{G^{(k)}}(B')$.
Then $C$ is a Jordan-Lie inner ideal of $\ccP_{1}(G)^{(1)}$ such
that $C\subseteq B$ and $\bar{C}=\bar{B}$. 
\end{prop}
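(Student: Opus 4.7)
The plan is to first verify that $B' = B \cap G$ is itself a Jordan-Lie inner ideal of $G^{(-)}$, so that the core $C = \core_{G^{(-)}}(B')$ is well-defined; the three conclusions will then follow by combining the machinery developed in the previous subsections with the fact that $G$ is large in $A$.

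The key preliminary observation I would prove is that $L = A^{(k)}$ is an ideal of $A^{(-)}$ for every $k \ge 0$. This is immediate for $k = 0$, and follows by induction for $k \ge 1$ via the Jacobi identity: if $A^{(k-1)}$ is an ideal of $A^{(-)}$, then $[A,A^{(k)}] = [A, [A^{(k-1)}, A^{(k-1)}]] \subseteq [A^{(k-1)}, A^{(k-1)}] = A^{(k)}$. With this in hand, I would check that $B'$ is a Jordan-Lie inner ideal of $G^{(-)}$ as follows. Since $B^{2}=0$, we have $(B')^{2}=0$. For $b, b' \in B'$ and $x \in G$, the bracket $[b',x]$ lies in $[L,A] \subseteq L$ (by the observation) and also in $[G,G] \subseteq G$; in particular $[b', x] \in L$, so
\[
[b,[b',x]] \in [B,[B,L]] \subseteq B
\]
by the inner ideal property of $B$ in $L$. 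Combined with $[b,[b',x]] \in [G,[G,G]] \subseteq G$, this yields $[b,[b',x]] \in B \cap G = B'$, as required.

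With $B'$ a Jordan-Lie inner ideal of $G^{(-)}$, the core $C = \core_{G^{(-)}}(B')$ is $G^{(-)}$-perfect by Lemma \ref{lem:cor is perfect}(i), and $C \subseteq B' \subseteq B$ directly from the definition of the core, proving the second claim. Since $C^{2} \subseteq (B')^{2} = 0$, $C$ is Jordan-Lie, so Lemma \ref{lem:B sub P(A)} (applied to $G$ in place of $A$, with $k=0$) gives $C \subseteq \ccP_{1}(G)$ and shows $C$ is a Jordan-Lie inner ideal of $\ccP_{1}(G)^{(1)}$, establishing the first claim. For the third, Lemma \ref{B=00003Dcor(B)}(i) applied to $B'$ as a Jordan-Lie inner ideal of $G^{(-)}$ gives $\bar{B}' = \overline{\core_{G^{(-)}}(B')} = \bar{C}$ in $\bar{G}$, which by Remark \ref{rem:rad_large} (using that $G$ is large in $A$) agrees with the corresponding identification in $\bar{A}$; together with the hypothesis $\bar{B}' = \bar{B}$, this gives $\bar{C} = \bar{B}$.

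The main obstacle is the first step: showing $B'$ is an inner ideal of $G^{(-)}$. Although $G$ is only a subalgebra of $A$ and need not be contained in $L = A^{(k)}$, the inner ideal property of $B$ in $L$ still controls $[B',[B',G]]$ precisely because $L$ is an ideal of $A^{(-)}$ — this forces the intermediate bracket $[B', G]$ back into $L$, where the inner ideal property can be invoked. Once this is in place, the rest of the argument is a clean application of the existing results on the core and on $L$-perfect Jordan-Lie inner ideals.
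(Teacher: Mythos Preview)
Your proof is correct and follows the same route as the paper's: establish that $B'=B\cap G$ is a Jordan-Lie inner ideal of $G^{(-)}$, then invoke Lemmas \ref{lem:cor is perfect}(i), \ref{B=00003Dcor(B)}(i) and \ref{lem:B sub P(A)} in that order. The only difference is that the paper simply asserts the first step without justification, whereas you supply one via the (standard but not previously stated) observation that $A^{(k)}$ is an ideal of $A^{(-)}$, which is exactly what is needed to push $[B',G]$ back into $L$ so the inner-ideal property of $B$ can be applied; this is a genuine detail worth spelling out, and your argument for it is clean.
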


\begin{proof}
Note that $B'=B\cap G^{(k)}$ is a Jordan-Lie inner ideal of $G^{(k)}$.
By Lemmas \ref{lem:cor is perfect}(i) and \ref{B=00003Dcor(B)}(i),
$C=\core_{G^{(k)}}(B')$ is a $G^{(k)}$-perfect Jordan-Lie inner
ideal of $G^{(k)}$ with $C\subseteq B'\subseteq B$ and $\bar{C}=\bar{B}'=\bar{B}$.
It remains to note that by Lemma \ref{lem:B sub P(A)}, $C$ is Jordan-Lie
inner ideal of $\ccP_{1}(G)^{(1)}$. 
\end{proof}

\subsection*{Regular inner ideals }

In this subsection we describe bar-minimal regular inner ideals of
$A^{(k)}$ ($k\ge0$). We start with the following result which is
a slight generalisation of \cite[Lemma 4.1]{Bav=000026Row}. 
\begin{lem}
\label{lem:Bar=000026Row 4.1} Let $L=A^{(k)}$ for some $k\ge0$
and let $B$ be a subspace of $L$ such that $B^{2}=0$. Then the
following hold. 

(i) If $p\ne2$ then $B$ is an inner ideal of $L$ if and only if
$bLb\subseteq B$ for all $b\in B$. 

(ii) $BAB\subseteq L\cap A^{(1)}$.

(iii) If $BAB\subseteq B$, then $B$ is a Jordan-Lie inner ideal
of $L$.
\end{lem}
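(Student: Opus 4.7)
The plan is to deduce all three parts from a single identity together with one structural fact. The identity is
\[
[b,[b',x]] = -bxb' - b'xb,
\]
namely equation (\ref{eq:bbx}), valid for all $b,b' \in B$ and $x \in A$ whenever $B^{2} = 0$. The structural fact is that $A^{(k)}$ is a Lie ideal of $A^{(-)}$ for every $k \ge 0$, so that $[A^{(k)},A] \subseteq A^{(k)}$.

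For (i), in the direction $(\Rightarrow)$ I specialise the identity to $b' = b$, obtaining $[b,[b,x]] = -2bxb$ for $b \in B$ and $x \in L$. Since $B$ is an inner ideal, the left-hand side lies in $B$, and as $p \ne 2$ I divide by $-2$ to conclude $bxb \in B$, i.e.\ $bLb \subseteq B$. Conversely, assume $bLb \subseteq B$ for every $b \in B$. Applying the hypothesis to $b+b' \in B$ and expanding $(b+b')x(b+b')$, the terms $bxb$ and $b'xb'$ lie in $B$ by hypothesis, so the Jordan triple product $\{b,x,b'\} = bxb' + b'xb$ lies in $B$; the identity above then gives $[B,[B,L]] \subseteq B$.

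For (iii), the identity shows directly that $[b,[b',x]] \in -BAB - BAB \subseteq B$ for every $b,b' \in B$ and $x \in L \subseteq A$, so the hypothesis $BAB \subseteq B$ forces $B$ to be an inner ideal of $L$. For (ii), the vanishing of $B^{2}$ gives $ab'b = 0$, so $bab' = bab' - ab'b = [b, ab']$; this exhibits $bab'$ as a commutator in $[A,A] = A^{(1)}$. To place it also inside $L$, I invoke the structural fact: $[b, ab'] \in [A^{(k)}, A] \subseteq A^{(k)} = L$.

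The only step that calls for genuine verification is the claim that each $A^{(k)}$ is a Lie ideal of $A^{(-)}$. I would prove this by induction on $k$: the cases $k \le 1$ are immediate, and for $k \ge 2$, given a generator $[x,y] \in A^{(k)} = [A^{(k-1)}, A^{(k-1)}]$ with $x,y \in A^{(k-1)}$ and $z \in A$, the Jacobi identity rewrites $[[x,y], z] = [[x,z], y] + [x, [y,z]]$; by the inductive hypothesis $[A^{(k-1)}, A] \subseteq A^{(k-1)}$, both summands lie in $[A^{(k-1)}, A^{(k-1)}] = A^{(k)}$. Everything else in the lemma reduces to mechanical use of (\ref{eq:bbx}), and the characteristic restriction $p \ne 2$ is needed only in part (i) to invert the factor $2$.
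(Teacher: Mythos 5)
Your proposal is correct and follows essentially the same route as the paper: part (i) via the specialisation $[b,[b,x]]=-2bxb$ and the polarisation $\{b,x,b'\}=(b+b')x(b+b')-bxb-b'xb'$, part (ii) via $bxb'=[b,xb']$ (using $b'b=0$), and part (iii) from $[B,[B,L]]\subseteq BAB$. The only difference is that you spell out the standard fact $[A^{(k)},A]\subseteq A^{(k)}$, which the paper uses without comment; your inductive Jacobi-identity argument for it is fine.
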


\begin{proof}
(i) This follows from Lemma \ref{lem:Jordan-Lie } as 
\[
\left\{ b,x,b'\right\} =bxb'+b'xb=(b+b')x(b+b')-bxb-b'xb'.
\]

(ii) $bxb'=[b,xb']\in[A^{(k)},A]\subseteq A^{(k)}\cap A^{(1)}=L\cap A^{(1)}$
for all $b,b'\in B$ and $x\in L$. 

(iii) This is obvious as $[B,[B,L]]\subseteq BAB$. 
\end{proof}
\begin{defn}
\label{def:Regular} Let $B$ be a subspace of $L=A^{(k)}$ ($k\ge0$).
Then $B$ is said to be a \emph{regular }inner ideal of \emph{$L$}
(with respect to $A$) if $B^{2}=0$ and $BAB\subseteq B$.
\end{defn}

Regular inner ideals were first defined in \cite{Bav=000026Row} (in
characteristic zero) and were recently used in \cite{Bav=000026Lop}
to classify maximal zero product subsets of simple rings. Note that
every regular inner ideal is Jordan-Lie (see Lemma \ref{lem:Bar=000026Row 4.1}).
However, the converse is not true as the following examples show.
\begin{example}
\label{nr-1} Let $A=S\oplus R$ be a finite dimensional associative
algebra with a Levi subalgebra $S$ and radical $R$ such that $S\cong\ccM_{n}$
with $n\ge3$, $R^{2}=0$ and $R\cong S$ as an $S$-bimodule. Let
$\{e_{ij}\mid1\le i\le j\le n\}$ and $\{f_{ij}\mid1\le i\le j\le n\}$
be the standard bases of $S$ and $R$, respectively, consisting of
matrix units. Fix any $1<s,t<n$. Let $B=span\{e_{1n},f_{1n},b\}$,
where $b=f_{1s}+f_{tn}$. Then $B^{2}=0$ and it is easy to check
that $[B,[B,A]]=B$, so $B$ is a Jordan-Lie inner ideal of $A$.
However, $BAB\nsubseteq B$ because $e_{1n}e_{n1}b=f_{1s}\notin B$,
so $B$ is not regular. 
\end{example}

\begin{example}
\label{nr} Let $\mathfrak{n}_{4}\subset\ccM_{4}$ be the set of all
strictly upper triangular $4\times4$ matrices over $\bbF$. Let $A$
be the direct sum of two nilpotent ideals $T_{4}$ and $T'_{4}$ with
both of them isomorphic to $\mathfrak{n}_{4}$. Clearly, $A^{4}=0$.
Let $\{e_{ij}\mid1\le i<j\le4\}$ and $\{e'_{ij}\mid1\le i<j\le4\}$
be the standard bases of $T_{4}$ and $T'_{4}$, respectively, consisting
of matrix units. Consider the following elements of $A$:
\[
b_{1}=e_{12}+e'_{34},\quad b_{2}=e_{34}+e'_{12},\quad a=e_{23}+e'_{23},\quad b=e_{14}+e'_{14}.
\]

Let $A_{1}=A^{2}+span\{b_{1},b_{2},a\}$. Then $A_{1}$ is a subalgebra
of $A$ as $A_{1}^{2}\subseteq A^{2}\subset A_{1}$. Consider the
subspace $B=span\{b_{1},b_{2},b\}$ of $A_{1}$. It is easy to check
that $B^{2}=0$ and $B$ is a Jordan-Lie inner ideal of $A_{1}$.
Moreover, $B$ is not regular as $b_{1}ab_{2}=e_{14}\notin B$. 

Note that $B$ is also a non-regular Jordan-Lie inner ideal of the
unital algebra $\hat{A}_{1}=A_{1}+\bbF\boldsymbol{1}_{\hat{A}}$,
by Lemma \ref{lem:B is J-L of A unital}.
\end{example}

\begin{lem}
\label{eAf is regular} Let $A$ be any ring and let $e$ and $f$
be idempotents in $A$ with $fe=0$. Then the following hold. 

(i) If $eAf\subseteq A^{(k)}$ ($k\ge0$), then $eAf$ is a regular
inner ideal of $A^{(k)}$. 

(ii) $eAf$ is a regular inner ideal of $A^{(-)}$ and $A^{(1)}$.
\end{lem}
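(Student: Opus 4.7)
The plan is to verify directly the two defining conditions of a regular inner ideal for $B = eAf$, namely $B^{2}=0$ and $BAB \subseteq B$, and then read off that $B$ is actually an inner ideal of $A^{(k)}$ via Lemma~\ref{lem:Bar=000026Row 4.1}(iii). The condition $fe=0$ is exactly what makes both computations collapse.

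For part (i), I would first observe that $(eAf)(eAf) \subseteq eA(fe)Af = 0$, which gives $B^{2}=0$. Next, for any elements of the form $(eaf), (ea'f) \in eAf$ and $x \in A$, one has $(eaf)\,x\,(ea'f) = e(afxea')f \in eAf$, so $BAB \subseteq B$. Since by hypothesis $eAf \subseteq A^{(k)}$, Lemma~\ref{lem:Bar=000026Row 4.1}(iii) now applies and shows that $eAf$ is an inner ideal of $A^{(k)}$; together with $B^{2}=0$ and $BAB \subseteq B$, Definition~\ref{def:Regular} gives that $eAf$ is a regular inner ideal of $A^{(k)}$.

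For part (ii), I only need to confirm the hypothesis of (i) for $k=0$ and $k=1$. The case $k=0$ is trivial since $A^{(-)} = A$ as a set. For $k=1$, note that for any $a \in A$ we have $(eaf)e = eafe = 0$ because $fe = 0$, and therefore $eaf = e(eaf) - (eaf)e = [e, eaf] \in [A,A] = A^{(1)}$, so $eAf \subseteq A^{(1)}$. Applying part (i) in both cases gives the conclusion. There is no serious obstacle here; the proof is essentially a mechanical check, with the only thing to watch being that the containment $eAf \subseteq A^{(1)}$ is granted specifically by the orthogonality condition $fe=0$.
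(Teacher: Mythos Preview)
Your proof is correct and follows essentially the same approach as the paper. The paper cites Lemma~\ref{lem:eAf cap Z =00003D 0}(ii) and (iii) for the Jordan-Lie property and the containment $eAf\subseteq A^{(1)}$, whereas you unpack those computations directly and invoke Lemma~\ref{lem:Bar=000026Row 4.1}(iii) instead; the underlying verifications ($B^{2}=0$, $BAB\subseteq B$, and $eaf=[e,eaf]$) are identical.
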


\begin{proof}
(i) By Lemma \ref{lem:eAf cap Z =00003D 0}(ii), $eAf$ is a Jordan-Lie
inner ideal of $A^{(k)}$ ($k\ge0$). It remains to note that $(eAf)A(eAf)\subseteq eAf$. 

(ii) This follows from (i) and Lemma \ref{lem:eAf cap Z =00003D 0}(iii). 
\end{proof}
The following result is proved in \cite[Proposition 4.12]{Bav=000026Row}
in the case $p=0$. 
\begin{prop}
\label{prop:Bav=000026Row 4.12} Suppose $A$ is semisimple, $p\neq2,3$
and $k\ge0$. Then every Jordan-Lie inner ideal of $A^{(k)}$ is regular. 
\end{prop}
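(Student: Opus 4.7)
The plan is to combine two earlier results to obtain the conclusion essentially immediately. Let $B$ be a Jordan-Lie inner ideal of $L = A^{(k)}$ with $A$ semisimple and $p \neq 2,3$.

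First, I would invoke Proposition \ref{cor:JL=00003DeAf}, which classifies all Jordan-Lie inner ideals of $A^{(k)}$ under precisely the hypotheses at hand: there exists a strict orthogonal idempotent pair $(e,f)$ in $A$ such that $B = eAf$. In particular $(e,f)$ is an orthogonal idempotent pair, so $fe = 0$.

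Next, I would split into the two cases $k = 0$ and $k \geq 1$. If $k = 0$, then $L = A^{(-)}$ and $B = eAf$ is regular in $L$ by Lemma \ref{eAf is regular}(ii). If $k \geq 1$, then Proposition \ref{prop:=00005BA,A=00005D quasi Levi} gives $A^{(k)} = A^{(1)}$, and Lemma \ref{eAf is regular}(ii) again gives that $B = eAf$ is a regular inner ideal of $A^{(1)} = L$.

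There is no real obstacle here: the work has been done in Proposition \ref{cor:JL=00003DeAf} (reducing an arbitrary Jordan-Lie inner ideal to one of the form $eAf$) and in Lemma \ref{eAf is regular} (verifying that such $eAf$ satisfies $B^{2} = 0$ and $BAB \subseteq B$, which are precisely the conditions in Definition \ref{def:Regular}). If anything, the only small point to be careful about is ensuring the argument covers both $k = 0$ and $k \geq 1$ uniformly, which is handled by the case split above using Proposition \ref{prop:=00005BA,A=00005D quasi Levi} to collapse all higher derived powers to $A^{(1)}$.
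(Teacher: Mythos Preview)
Your proposal is correct and matches the paper's proof exactly: the paper simply cites Proposition \ref{cor:JL=00003DeAf} and Lemma \ref{eAf is regular}(ii). Your explicit case split on $k$ (using Proposition \ref{prop:=00005BA,A=00005D quasi Levi} to reduce $k\ge1$ to $k=1$) is a harmless elaboration of the same argument.
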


\begin{proof}
This follows from Proposition \ref{cor:JL=00003DeAf} and Lemma \ref{eAf is regular}(i). 
\end{proof}
We will need the following two results which were first proved in
\cite{Bav=000026Row} in the case when $p=0$. One can easily check
that their proofs in \cite{Bav=000026Row} apply to any $p$.
\begin{prop}
\label{Bav=000026Row 4.8}\cite[Proposition 4.8]{Bav=000026Row} Let
$A$ be an associative ring. Then 

(i) $A$ is Von Neumann regular if and only if $\ccR\ccL=\ccR\cap\ccL$
for all left and right ideals $\ccL$ and $\ccR$, respectively, in
$A$;

(ii) every $x$ in $A$ with $x^{2}=0$ is Von Neumann regular if
and only if $\ccR\ccL=\ccR\cap\ccL$ for all left and right ideals
$\ccL$ and $\ccR$, respectively, in $A$ such that $\ccL\ccR=0$.
\end{prop}

\begin{prop}
\label{prop:Bav=000026Row 4.9}\cite[Proposition 4.9]{Bav=000026Row}
Let $B$ be a subspace of $L=A^{(k)}$ ($k\ge0$). Then $B$ is a
regular inner ideal of $L$ if and only if there exist left and right
ideals $\ccL$ and $\ccR$ of $A$, respectively, such that $\ccL\ccR=0$
and 
\[
\ccR\ccL\subseteq B\subseteq\ccR\cap\ccL.
\]
In particular, if $A$ is Von Neumann regular then every regular inner
ideal of $L$ is of the form $B=\ccR\ccL=\ccR\cap\ccL$. 
\end{prop}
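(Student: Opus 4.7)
The plan is to prove both directions directly and then deduce the ``In particular'' clause from the Von Neumann characterization in Proposition \ref{Bav=000026Row 4.8}.

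For the forward direction, suppose $B$ is a regular inner ideal of $L$, so $B^{2}=0$ and $BAB\subseteq B$. Define
\[
\ccL=AB+B\qquad\text{and}\qquad\ccR=BA+B,
\]
the smallest left and right ideals of $A$ containing $B$ (working inside $A$ itself is enough; no unit is needed). Clearly $B\subseteq\ccR\cap\ccL$. Using $B^{2}=0$ and $AA\subseteq A$, a direct expansion gives
\[
\ccL\ccR=(AB+B)(BA+B)\subseteq AB^{2}A+AB^{2}+B^{2}A+B^{2}=0,
\]
and using $BAB\subseteq B$,
\[
\ccR\ccL=(BA+B)(AB+B)\subseteq BAAB+BAB+BAB+B^{2}\subseteq BAB\subseteq B.
\]

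For the converse, assume $\ccL$ and $\ccR$ are as in the statement. Since $B\subseteq\ccL$ and $B\subseteq\ccR$, we have
\[
B^{2}\subseteq\ccL\ccR=0,
\]
so $B^{2}=0$. Using that $\ccR$ is a right ideal and $B\subseteq\ccL$,
\[
BAB\subseteq\ccR\cdot A\cdot\ccL\subseteq\ccR\ccL\subseteq B.
\]
Combining $B^{2}=0$ with $BAB\subseteq B$ and the hypothesis $B\subseteq L$, Lemma \ref{lem:Bar=000026Row 4.1}(iii) shows $B$ is an inner ideal of $L$, hence regular by Definition \ref{def:Regular}.

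For the final assertion, assume $A$ is Von Neumann regular. By Proposition \ref{Bav=000026Row 4.8}(i), any left ideal $\ccL$ and right ideal $\ccR$ of $A$ satisfy $\ccR\ccL=\ccR\cap\ccL$; in particular this holds for the pair produced above (regardless of whether $\ccL\ccR=0$). The sandwich $\ccR\ccL\subseteq B\subseteq\ccR\cap\ccL$ then forces $B=\ccR\ccL=\ccR\cap\ccL$.

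The proof is essentially formal manipulation, so there is no serious obstacle; the only care required is to avoid using a unit in $A$ (handled by taking $\ccL=AB+B$ rather than $\hat AB$) and to invoke Proposition \ref{Bav=000026Row 4.8} in the right form for the last clause.
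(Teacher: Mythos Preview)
Your proof is correct. The paper does not include its own proof of this proposition; it is quoted from \cite[Proposition 4.9]{Bav=000026Row} with the remark that the argument there carries over to any characteristic, and your construction $\ccL=AB+B$, $\ccR=BA+B$ together with the sandwich check is exactly the standard argument one expects (and matches the cited source).
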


Let $\ccL$ be a left ideal of $A$ and let $X$ be a left ideal of
$\bar{A}$. Then $\ccL$ is said to be $X$-minimal if $\bar{\ccL}=X$
and for every left ideal $\ccL'$ of $A$ with $\ccL'\subseteq\ccL$
and $\bar{\ccL}'=X$ one has $\ccL=\ccL'$. We will need the following
theorem from \cite{Bav=000026Mud=000026Shk}. 
\begin{thm}
\cite{Bav=000026Mud=000026Shk}\label{thm:left=00003DAe} Let $A$
be a left Artinian associative ring and let $\ccL$ be a left ideal
of $A$. If $\ccL$ is $\bar{\ccL}$-minimal, then $\ccL=Ae$ for
some idempotent $e\in\ccL$.
\end{thm}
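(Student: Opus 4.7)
The plan is to produce an idempotent $e \in \ccL$ whose image in $\bar{A}$ generates $\bar{\ccL}$, and then invoke $\bar{\ccL}$-minimality to conclude that $\ccL = Ae$. The three ingredients I would combine are: the semisimplicity of $\bar{A} = A/R$, the nilpotence of the Jacobson radical $R$ of $A$ (which holds because $A$ is left Artinian, by Hopkins--Levitzki), and the classical lifting of idempotents across a nil ideal.

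First I would use that $\bar{A}$ is semisimple Artinian, so the left ideal $\bar{\ccL}$ is generated by an idempotent: $\bar{\ccL} = \bar{A}\bar{e}_{0}$ for some idempotent $\bar{e}_{0} \in \bar{\ccL}$. Choose any preimage $x \in \ccL$ of $\bar{e}_{0}$; then $\bar{x}^{2} = \bar{x}$, i.e.\ $x^{2} - x \in R$. Next I would lift $x$ to an idempotent of $A$ sitting inside $\ccL$. Working in the commutative subring generated by $x$ and using B\'ezout's identity for the coprime polynomials $t^{n}$ and $(t-1)^{n}$ in $\mathbb{Z}[t]$ (where $n$ is chosen with $R^{n} = 0$), one produces a polynomial $p(t)$ with vanishing constant term such that $e := p(x)$ satisfies $e^{2} = e$ and $\bar{e} = \bar{e}_{0}$. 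The absence of a constant term is essential: it forces $e$ to be a sum of positive powers of $x$, each of which belongs to $A \cdot x \subseteq \ccL$ since $\ccL$ is a left ideal and $x \in \ccL$. Hence $e \in \ccL$.

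Finally, $Ae$ is a left ideal of $A$ with $Ae \subseteq \ccL$, and
\[
\overline{Ae} = \bar{A}\bar{e} = \bar{A}\bar{e}_{0} = \bar{\ccL},
\]
so the $\bar{\ccL}$-minimality of $\ccL$ forces $\ccL = Ae$. The one real subtlety, and the step I would be most careful about, is arranging the lift of $x$ to lie inside $\ccL$ rather than in the ambient ring $A$; without the no-constant-term polynomial trick one would merely obtain some idempotent $e \in A$ with $\bar{e} = \bar{e}_{0}$, and the minimality hypothesis on its own would not suffice to pin $\ccL$ down.
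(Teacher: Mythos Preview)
Your argument is correct, but there is nothing in the paper to compare it to: this theorem is not proved here. It is quoted from the external reference \cite{Bav=000026Mud=000026Shk} and used as a black box in the proof of Theorem~\ref{thm:Regular B=00003DeAf }.

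For what it is worth, your approach is the standard one and essentially what that reference does: produce an idempotent generator of $\bar{\ccL}$ in the semisimple quotient $\bar A$, lift it across the nilpotent radical $R$ via a polynomial with zero constant term so that the lift $e$ lands in $\ccL$, and then invoke $\bar{\ccL}$-minimality on the inclusion $Ae\subseteq\ccL$. One tiny wording fix: when you say each positive power of $x$ lies in $A\cdot x$, this is literally true only for $x^{k}$ with $k\ge 2$ (unless $A$ is unital); the term $x$ itself is in $\ccL$ simply because you chose it there. Either way $e=p(x)\in\ccL$, so the conclusion stands.
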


\begin{thm}
\label{thm:Regular B=00003DeAf } Let $B$ be a bar-minimal Jordan-Lie
inner ideal of $L=A^{(k)}$ ($k\ge0$). Then the following holds.

(i) If $B$ is regular then $B=eAf$ for some orthogonal idempotent
pair $(e,f)$ in $A$. 

(ii) Suppose $k=0,1$. Then $B$ is regular if and only if $B=eAf$
for some orthogonal idempotent pair $(e,f)$ in $A$.
\end{thm}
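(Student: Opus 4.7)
The plan for (i) is to lift the regularity structure to idempotent generators by combining the one-sided-ideal characterization of regular inner ideals with bar-minimality of $B$; part (ii) will then follow essentially by citation.

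For (i), apply Proposition \ref{prop:Bav=000026Row 4.9} to obtain a left ideal $\ccL$ and a right ideal $\ccR$ of $A$ with $\ccL\ccR=0$ and $\ccR\ccL\subseteq B\subseteq\ccR\cap\ccL$. Using the descending chain condition on $A$, refine to a $\bar{\ccL}$-minimal left ideal $\ccL'\subseteq\ccL$ and a $\bar{\ccR}$-minimal right ideal $\ccR'\subseteq\ccR$, so $\bar{\ccL'}=\bar{\ccL}$ and $\bar{\ccR'}=\bar{\ccR}$. Theorem \ref{thm:left=00003DAe} then produces idempotents $e,f\in A$ with $\ccR'=eA$ and $\ccL'=Af$. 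The inclusion $Af\cdot eA\subseteq\ccL'\ccR'\subseteq\ccL\ccR=0$ together with the identity $f(fe)e=fe$ gives $fe=0$, without any unital assumption on $A$.

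Set $B'':=eAf$. Then $B''=eA\cdot Af=\ccR'\ccL'\subseteq B$ (the equality $eA\cdot Af=eAf$ uses $e=e^{2}$ and $f=f^{2}$), and $(B'')^{2}=0$ follows from $fe=0$, so $B''$ is a Jordan-Lie inner ideal of $L$. The bar-image computation $\overline{B''}=\bar{e}\bar{A}\bar{f}=\bar{\ccR'}\bar{\ccL'}=\bar{\ccR}\bar{\ccL}$, combined with $\bar{\ccR}\bar{\ccL}\subseteq\bar{B}\subseteq\bar{\ccR}\cap\bar{\ccL}$ and the equality $\bar{\ccR}\bar{\ccL}=\bar{\ccR}\cap\bar{\ccL}$ (valid because $\bar{A}$ is semisimple, hence Von Neumann regular, so Proposition \ref{Bav=000026Row 4.8}(ii) applies in view of $\bar{\ccL}\bar{\ccR}=0$), forces $\overline{B''}=\bar{B}$. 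Bar-minimality of $B$ now yields $B=B''=eAf$, and Lemma \ref{lem:eAf cap Z =00003D 0}(iv) supplies an idempotent $g\in A$ orthogonal to $e$ with $B=eAf=eAg$, as desired.

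The forward direction of (ii) is immediate from (i); for the converse with $k\in\{0,1\}$, if $B=eAf$ for an orthogonal pair $(e,f)$ then Lemma \ref{eAf is regular}(ii) directly asserts that $eAf$ is a regular inner ideal of both $A^{(-)}$ and $A^{(1)}$. The main technical hurdle is the identification $\bar{B}=\bar{\ccR}\cap\bar{\ccL}=\bar{\ccR}\bar{\ccL}$ in the semisimple quotient: this is what converts the loose sandwich produced by Proposition \ref{prop:Bav=000026Row 4.9} into a genuine equality that can be matched against the concrete idempotent-generated inner ideal $B''$. The remaining bookkeeping (the derivation of $fe=0$ from $AfeA=0$ and the identification $eA\cdot Af=eAf$ without a unit) is routine but should be written out carefully.
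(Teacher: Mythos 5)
Your proposal is correct and follows essentially the same route as the paper: Proposition \ref{prop:Bav=000026Row 4.9} to get the sandwich $\ccR\ccL\subseteq B\subseteq\ccR\cap\ccL$, refinement to bar-minimal one-sided ideals, Theorem \ref{thm:left=00003DAe} to produce $e$ and $f$, Proposition \ref{Bav=000026Row 4.8} in the semisimple quotient to force $\overline{eAf}=\bar{B}$, bar-minimality to conclude $B=eAf$, and Lemma \ref{lem:eAf cap Z =00003D 0}(iv) plus Lemma \ref{eAf is regular} to finish. The only cosmetic differences are the order in which you establish $\bar{\ccR}\bar{\ccL}=\bar{B}$ and that the paper justifies the inner-ideal property of $eAf$ via Proposition \ref{prop:Bav=000026Row 4.9} rather than from $(eAf)^{2}=0$ alone, which by itself does not suffice and should be backed by $(eAf)A(eAf)\subseteq eAf$ together with Lemma \ref{lem:Bar=000026Row 4.1}(iii).
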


\begin{proof}
(i) Suppose that $B$ is regular. Then by Proposition \ref{prop:Bav=000026Row 4.9},
there are left $\ccL$ and right $\ccR$ ideals of $A$ such that
$\ccL\ccR=0$ and $\ccR\ccL\subseteq B\subseteq\ccR\cap\ccL$. Hence,
$\overline{\ccR\ccL}=\bar{\ccR}\bar{\ccL}\subseteq\bar{B}\subseteq\bar{\ccL}\cap\bar{\ccR}$.
Since $\bar{A}$ is Von Neumann regular (because it is semisimple),
by Proposition \ref{Bav=000026Row 4.8}, $\bar{\ccR}\bar{\ccL}=\bar{B}$.
Let $\ccL'\subseteq\ccL$ (resp. $\ccR'\subseteq\ccR$) be an $\bar{\ccL}$-minimal
left (resp. $\bar{\ccR}$-minimal right) ideal of $A$. Then by Theorem
\ref{thm:left=00003DAe}, $\ccL'=Af$ and $\ccR'=eA$ for some idempotents
$e\in\ccR'$ and $f\in\ccL'$. Note that $fe\in\ccL'\ccR'\subseteq\ccL\ccR=0.$
Put $B'=\ccR'\ccL'\subseteq B$. Then $B'=eAAf=eAf$ (as $eAf=eeAf\subseteq eAAf\subseteq eAf$).
Since $B'^{2}=0$, by Proposition \ref{prop:Bav=000026Row 4.9}, $B'$
is a regular inner ideal of $L$. As $\overline{B'}=\overline{\ccR'}\,\overline{\ccL'}=\bar{\ccR}\bar{\ccL}=\bar{B}$
and $B$ is bar-minimal, $B=B'.$ Thus, $B=eAf$ for some idempotents
$e$ and $f$ in $A$ with $fe=0$. Therefore, by Lemma \ref{lem:eAf cap Z =00003D 0}(iv),
$B=eAf=eAg$ for some idempotent $g$ in $A$ with $ge=eg=0$.

(ii) This follows from (i) and Lemma \ref{eAf is regular}.
\end{proof}

\section{Proof of the main results }

The aim of this section is to prove that bar-minimal Jordan-Lie inner
ideals are generated by idempotents (Theorem \ref{thm:main iff})
and are regular (Corollary \ref{cor:main regular}). As a corollary,
we show that all Jordan-Lie inner ideals split (Corollary \ref{cor:main split}).
Recall that $S$ is a Levi subalgebra of $A$, $L=A^{(k)}=S^{(k)}\oplus N$,
for some $k\ge0$, $N=R\cap L$, and $\bar{B}$ is the image of $B$
in $\bar{L}=L+R/R\cong L/N$. 

First we consider the case when $A$ is $1$-perfect. Then $L=[A,A]$
is a perfect Lie algebra for $p\ne2$ (see Theorem \ref{BaranovLiePerfect}).
The following theorem will be proved in steps. 
\begin{thm}
\label{thm:B=00003DeAf } Let $L=[A,A]$ and let $B$ be a Jordan-Lie
inner ideal of $L$. Suppose that $p\neq2,3$, $A$ is $1$-perfect
and $B$ is bar-minimal. Then the following hold. 

(i) $B$ splits in $A$. 

(ii) $B=eAf$ for some strict orthogonal idempotent pair $(e,f)$
in $A$.

(iii) $B$ is regular. 
\end{thm}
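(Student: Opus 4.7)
The strategy is to produce a strict orthogonal idempotent pair $(e,f)$ in $A$ with $B=eAf$: then (ii) is immediate, (i) follows from Lemma \ref{lem:eAf splits}, and (iii) from Lemma \ref{eAf is regular}(ii). So it suffices to construct such a pair.

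Since $B$ is bar-minimal, Lemma \ref{lem: B =00003D =00005BB,=00005BB,L=00005D=00005D } gives $B=\core_{L}(B)$ and $B$ is $L$-perfect. The image $\bar{B}$ is a Jordan-Lie inner ideal of $\bar{L}=[\bar{A},\bar{A}]$ with $\bar{A}$ semisimple, so Lemma \ref{B=00003DeAf if A is semi Artinian } (using $p\neq 2,3$) yields a strict orthogonal pair $(\bar{e}_{0},\bar{f}_{0})$ in $\bar{A}$ with $\bar{B}=\bar{e}_{0}\bar{A}\bar{f}_{0}$. Fix a Levi subalgebra $S$ of $A$ via Wedderburn--Malcev; identifying $\bar{A}\cong S$, lift $(\bar{e}_{0},\bar{f}_{0})$ to a strict orthogonal pair $(e,f)$ of idempotents in $S$. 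By Lemmas \ref{lem:eAf cap Z =00003D 0}(iii) and \ref{eAf is regular}(ii), $eAf$ is a regular Jordan-Lie inner ideal of $L=[A,A]$, and $\overline{eAf}=\bar{e}_{0}\bar{A}\bar{f}_{0}=\bar{B}$.

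The crux is to show $eAf\subseteq B$, possibly after replacing $(e,f)$ by a conjugate pair of the form $((1+r)e(1+r)^{-1},(1+r)f(1+r)^{-1})$, $r\in R$ (carried out in $\hat{A}$), which amounts to choosing a different Levi subalgebra and preserves the classes of $e,f$ modulo $R$. Granting the inclusion, bar-minimality of $B$ forces $B=eAf$, since $eAf$ is an inner ideal of $L$ with $\overline{eAf}=\bar{B}$. To prove the inclusion, write each $b\in B$ uniquely as $b=esf+n$ with $esf\in eSf$ (using $\bar{b}\in\bar{e}\bar{A}\bar{f}=\overline{eSf}$) and $n\in N=R\cap L$; the projection $B\to eSf$ so obtained is surjective. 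Using the Jordan-Lie identity $\{b,x,b'\}=bxb'+b'xb\in B$ (Lemma \ref{lem:Jordan-Lie }) together with the $L$-perfectness $B=[B,[B,L]]$, one tracks how the nil corrections $n$ interact with the filtration $R\supseteq R^{2}\supseteq\cdots\supseteq R^{m}=0$. An induction on this filtration shows that the corrections can be absorbed by conjugation by a suitable $1+r$; after this adjustment one obtains $eSf\subseteq B$, and an analogous (simpler) argument yields $eRf\subseteq B$, whence $eAf=eSf\oplus eRf\subseteq B$.

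The principal obstacle is this filtration-and-conjugation step, which requires careful bookkeeping of how Jordan-Lie products and the structure of $L$ interact with the powers of $R$. Here the $1$-perfect hypothesis on $A$ is essential: by Theorem \ref{BaranovLiePerfect} it guarantees that $L=[A,A]$ is perfect, furnishing a sufficiently rich supply of Lie brackets to exploit both the $L$-perfectness of $B$ and the Jordan-Lie identity needed to drive the induction.
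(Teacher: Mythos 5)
Your reduction of (i) and (iii) to (ii) is sound, and your final step (once $eAf\subseteq B$ with $\overline{eAf}=\bar{B}$, bar-minimality forces $B=eAf$) matches the paper. But the heart of the proof is missing. The sentence ``an induction on this filtration shows that the corrections can be absorbed by conjugation by a suitable $1+r$'' is not an argument; it is a restatement of the theorem. Finding an inner automorphism $\varphi=(1+r)\,\cdot\,(1+r)^{-1}$ with $eSf\subseteq\varphi(B)$ is exactly the assertion that $B$ splits in $A$ (part (i)), and this is where all the work lies. In the paper this occupies the bulk of Section 6: one first reduces to $R^{2}=0$ (Theorem \ref{thm:minimal splits if R^2=00003D0}), decomposes $R$ as an $S$-bimodule into the types $U_{i0}$, $U_{0j}$, $U_{ij}$ of Lemma \ref{lem:natural bi-module}, and handles each type by explicit matrix-unit computations --- the case $R\cong S$ as a bimodule (Proposition \ref{prop: R is S-bimodule}) needs four successive inner automorphisms, with the relation $B^{2}=0$ invoked at each stage to kill residual coefficients. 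There is no single uniform ``bookkeeping'' induction; the conjugating elements are constructed differently in each case, and in the case $RA=0$ the conclusion is the stronger statement $B\subseteq S'$, which is what makes the length induction on $R$ work.

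A second, structural gap: your induction on the filtration $R\supseteq R^{2}\supseteq\cdots$ cannot be run naively inside $A$ itself. After splitting the image of $B$ in $A/R^{2}$, the preimage construction lands you in a \emph{large subalgebra} $G$ of $A$ (not in $A$), where $B\cap G$ need no longer be bar-minimal or even an inner ideal of the right Lie algebra; the paper repairs this by passing to $\core_{G^{(-)}}(B\cap G)$, then to $\ccP_{1}(G)$, then to a bar-minimal inner ideal inside that, and finally transfers the splitting back via Proposition \ref{lem:Large is splits}. None of this machinery appears in your sketch, and without it the inductive hypothesis does not apply. Note also that $1$-perfectness is needed for more than making $L$ perfect: it excludes trivial components $U_{00}$ from $R$ and keeps the hypothesis stable under the passage to $\ccP_{1}(G)$. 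Finally, your ``analogous (simpler) argument'' for $eRf\subseteq B$ is only simple \emph{after} the splitting is in hand; in the paper it is the Jordan-triple computation at the end of the proof of (ii), which presupposes (i).
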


First we will consider the case when $R^{2}=0$. 
\begin{thm}
\label{thm:minimal splits if R^2=00003D0} Let $L=[A,A]$ and let
$B$ be a Jordan-Lie inner ideal of $L$. Suppose that $p\neq2,3$,
$A$ is $1$-perfect, $B$ is bar-minimal and $R^{2}=0$. Then $B$
splits in $A$. 
\end{thm}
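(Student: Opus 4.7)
The strategy is to find a Levi subalgebra $S'$ of $A$ such that $B=(B\cap S')\oplus(B\cap R)$. I approach this by introducing an associated ``twisting map'' and reducing the problem to a cohomological one.

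First I apply Proposition \ref{cor:JL=00003DeAf} to the Jordan--Lie inner ideal $\bar{B}$ of the semisimple algebra $\bar{L}=\bar{A}^{(k)}$, obtaining $\bar{B}=\bar{e}\bar{A}\bar{f}$ for a strict orthogonal idempotent pair $(\bar{e},\bar{f})$. Since $R$ is nilpotent, I lift these to orthogonal idempotents $e,f\in A$ and use Wedderburn--Malcev to choose a Levi $S\supseteq\{e,f\}$. By Proposition \ref{prop:A is strongly L=00003D=00005BA,A=00005D} together with $R^{2}=0$, one has $L=Q\oplus N$ with $Q=[S,S]$ and $N=[S,R]\subseteq R$, and $\bar{B}$ is identified with $eSf\subseteq Q$ under $\bar{L}\cong Q$. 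Setting $B_{N}:=B\cap N$, every $b\in B$ admits a unique expression modulo $B_{N}$ as $x+\phi(x)$ with $x\in eSf$, giving a well-defined linear ``twisting map'' $\phi\colon eSf\to N/B_{N}$; the vanishing of $\phi$ is equivalent to $B$ splitting with respect to $S$. A direct computation (using $R^{2}=0$, so that $(1+r)(1-r)=1$ for $r\in R$ and $[N,N]\subseteq[R,R]=0$) shows that conjugation by $(1+r)$ produces a new Levi $S'=(1+r)S(1-r)$, keeps $N$ invariant, and replaces $\phi$ by $\phi-\ad(r)|_{eSf}\pmod{B_{N}}$. Hence the problem reduces to producing $r\in R$ with $\phi(x)\equiv[r,x]\pmod{B_{N}}$ for every $x\in eSf$.

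The heart of the argument is to exhibit such an $r$. Expanding the identities $B^{2}=0$ and $\{b,l,b'\}\in B$ for $b=x+\phi(x)$, $b'=x'+\phi(x')\in B$ and $l\in Q$---making essential use of $R^{2}=0$ to kill all products of pairs from $R$, and of $xx'=x'x=0$ (from $\bar{B}^{2}=0$)---yields a Hochschild-style cocycle identity for $\phi$ with respect to the $S$-bimodule structure on $N/B_{N}$. Since $S$ is a direct sum of matrix algebras $M_{n_{i}}(\bbF)$ with $n_{i}\geq 2$ (by $1$-perfectness) over the algebraically closed field $\bbF$, it is separable, and the standard vanishing $H^{1}(S,N/B_{N})=0$ forces every such cocycle to be a coboundary: $\phi$ is the restriction to $eSf$ of an inner derivation $\ad(r)$ for some $r\in R$. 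With this $r$ the Levi $S'=(1+r)S(1-r)$ delivers $B=(B\cap S')\oplus B_{N}=(B\cap S')\oplus(B\cap R)$, so $B$ splits in $A$. The main obstacle is the cocycle-to-coboundary step: one must verify that the cocycle data arising from the inner-ideal axiom assembles into the full Leibniz rule required for the Hochschild vanishing to apply, which is where the hypothesis $p\neq 2,3$ enters, to invert the scalar factors produced by the Jordan symmetrisation $\{b,l,b'\}=blb'+b'lb$.
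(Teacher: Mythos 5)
Your setup (lifting the idempotent pair, choosing a Levi $S\ni e,f$, writing each $b\in B$ as $x+\phi(x)$ modulo $B_{N}$, and observing that splitting is equivalent to $\phi$ being of the form $x\mapsto[r,x]$ modulo $B_{N}$, after which one conjugates by $1+r$) is sound, and it is in fact the mechanism hidden inside the paper's explicit computations: the automorphisms $\varphi_{i}(a)=(1+q_{i})a(1-q_{i})$ there are exactly such conjugations. But the step you yourself flag as ``the main obstacle'' --- passing from the identities supplied by the inner-ideal axioms to the vanishing of $H^{1}(S,N/B_{N})$ --- is a genuine gap, not a verification, and it is where essentially all of the content of the theorem lives. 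Two things go wrong. First, $N/B_{N}$ is not an $S$-bimodule: $N=[S,R]$ is not an $S$-sub-bimodule of $R$ (e.g.\ when $R$ has a component of type $U_{ii}$ one gets $N\cap U_{ii}\cong\sl_{n_{i}}\subsetneq U_{ii}$), and $B_{N}=B\cap N$ is just a subspace with no closure under one-sided multiplication by $S$; the inner-ideal axiom $\{b,l,b'\}\in B$ with $b'\in B_{N}$ only yields the \emph{symmetrised} containment $xlb'+b'lx\in B_{N}$, so the individual terms $xq\phi(x')$, $\phi(x)qx'$ appearing in your would-be cocycle identity are not even well defined modulo $B_{N}$. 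Second, and more fundamentally, $\phi$ is defined only on $eSf$, which generates nothing: the relations you can extract are $x\phi(x')+\phi(x)x'=0$ and a Jordan-triple identity $\phi(\{x,q,x'\})\equiv xq\phi(x')+\phi(x)qx'+x'q\phi(x)+\phi(x')qx+\{x,m,x'\}$ for $q+m\in L$. This is not the Leibniz rule for a map on $S$, and there is no canonical extension of $\phi$ to a Hochschild $1$-cochain on $S$ (or a Lie $1$-cochain on $Q$) to which separability of $S$ could be applied. Showing that a map on the ``rectangle'' $eSf$ satisfying only these Jordan-type constraints must be inner modulo $B_{N}$ is precisely what the paper proves by hand, one bimodule type at a time.

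By contrast, the paper decomposes $R$ into irreducible $S$-bimodule constituents (Lemma \ref{lem:natural bi-module}), inducts on the length of $R$ via large subalgebras and Proposition \ref{lem:Large is splits}, and in each base case ($RA=0$, $AR=0$, $R\cong U_{ii}$, $R\cong U_{ij}$ between two components) constructs the element $r$ explicitly from the coefficients of chosen preimages $b_{st}$ of the matrix units spanning $\bar{B}$, using specific Jordan triple products such as $\{b_{st},e_{t1},b_{1t}\}$ and the constraint $B^{2}=0$ to normalise those coefficients step by step. Note also that the paper's argument makes real use of bar-minimality (to conclude $B=B_{S'}$ in Proposition \ref{prop:B splits in AR=00003D0 =000026 R irreducible} and to restart the induction inside large subalgebras), whereas your outline never invokes it --- a further sign that the cohomological shortcut, as stated, is not yet a proof. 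Finally, a small point: $p\neq3$ is not needed to invert Jordan symmetrisation factors (that is $p\neq2$); it enters through Benkart's classification (Theorem \ref{thm:B=00003DeAf if A is simple Artinian}) used to identify $\bar{B}$ with $\bar{e}\bar{A}\bar{f}$ in the first place.
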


Theorem \ref{thm:minimal splits if R^2=00003D0} first appeared in
Rowley's thesis \cite{Rowley} in the case when $p=0$ and we use
some of his ideas below. Unfortunately, his proof is incomplete and
contains some inaccuracies. In particular, the proof of \cite[Proposition 6.12]{Rowley}
is incorrect. We will need the following lemma. 
\begin{lem}
\label{lem:(i) radL  (ii) B } Let $L=[A,A]$ and $Q=[S,S]$. Suppose
that $p\neq2$, $A/R$ is simple, $RA=0$ and $R$ is an irreducible
left $A$-module. Then the following hold. 

(i) $N=R$. 

(ii) Every Jordan-Lie inner ideal of $Q$ is a Jordan-Lie inner ideal
of $L$. 

(iii) Let $G$ be a large subalgebra of $A$ and let $B$ be a Jordan-Lie
inner ideal of $[G,G]$. Then $B$ is a Jordan-Lie inner ideal of
$L$. 
\end{lem}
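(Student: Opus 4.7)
The plan is to prove the three parts in order, leveraging the very restrictive hypothesis $RA=0$, which forces $R^2=0$ as well as $RL\subseteq RA=0$.

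For (i), I would invoke Proposition \ref{prop:A is strongly L=00003D=00005BA,A=00005D} to write $N=[S,R]+[R,R]$. Since $R^2\subseteq RA=0$, the second summand vanishes, and since $RS\subseteq RA=0$, the bracket reduces to $[S,R]=SR$. Thus $N=SR$. The irreducibility of $R$ as a left $A$-module then lets me conclude $AR=R$ (the alternative $AR=0$ would make $R$ a one-dimensional trivial submodule, a degenerate case excluded by the usage of this lemma); writing $AR=SR+R^2=SR$, I obtain $N=SR=R$.

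For (ii), given a Jordan-Lie inner ideal $B$ of $Q=[S,S]$, note $B\subseteq Q\subseteq S$ and use (i) to decompose $L=Q\oplus R$, so $[B,[B,L]]=[B,[B,Q]]+[B,[B,R]]$. The first term lies in $B$ by hypothesis. For the second, $RB\subseteq RS\subseteq RA=0$, so $[B,R]=BR\subseteq SR\subseteq R$; then
\[
[B,[B,R]]\subseteq [B,BR]=B(BR)-(BR)B=B^2R-BRB=0,
\]
since $B^2=0$ and $(BR)B\subseteq RB=0$. Combined with $B^2=0$, this shows $B$ is Jordan-Lie in $L$.

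For (iii), I would use that $G$ large means a Levi $S$ of $A$ lies inside $G$, so $Q=[S,S]\subseteq [G,G]$. Let $B\subseteq [G,G]\subseteq L$ be Jordan-Lie in $[G,G]$. Exactly as in (ii), since $RL\subseteq RA=0$ we get $RB=0$, hence $[B,R]=BR\subseteq R$ and
\[
[B,[B,R]]\subseteq [B,BR]=B^2R-BRB=0.
\]
For the $Q$-part, $[B,[B,Q]]\subseteq[B,[B,[G,G]]]\subseteq B$ by the hypothesis on $B$. Writing $[B,[B,L]]=[B,[B,Q]]+[B,[B,R]]\subseteq B$ finishes the proof.

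The only real obstacle is (i): the argument genuinely uses $AR\ne 0$, and one must either argue this from the implicit meaning of ``irreducible'' in this paper (nontrivial simple module, so $AR=R$) or observe that the degenerate case $AR=0$ is incompatible with the situation in which the lemma is applied within Theorem \ref{thm:minimal splits if R^2=00003D0}. Once (i) is established, parts (ii) and (iii) are essentially identical and reduce to the same computation in which every cross-term involving $R$ collapses because of $RA=0$ and $B^2=0$.
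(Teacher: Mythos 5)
Your proposal is correct and follows essentially the same route as the paper: part (i) via $N=[S,R]+[R,R]$ together with irreducibility of $R$ (the paper likewise implicitly uses the non-degenerate convention, writing $r=sr$ for some $s\in S$), and parts (ii)--(iii) via the decomposition $L=Q\oplus R$ with the $R$-component annihilated by $RA=0$ and $B^{2}=0$. The only cosmetic differences are that the paper phrases the computation through the Jordan triple product $\{b,x,b'\}$ and deduces (ii) from (iii), whereas you work with the double bracket directly and prove (ii) first.
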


\begin{proof}
(i) Let $r\in R$. Since $R$ is irreducible as $S$-module, $r=sr$
for some $s\in S$. As $RA=0$, $r=sr=[s,r]\in[S,R]=N$ by Proposition
\ref{prop:A is strongly L=00003D=00005BA,A=00005D}, so $R=N$. 

(ii) This follows from (iii) as $Q=[S,S]$ and $S$ is a large subalgebra
of $A$. 

(iii) Since $G$ is a large subalgebra of $A$, it contains a Levi
subalgebra of $A$. Without loss of generality we can assume $S\subseteq G$.
Let $x\in L$. Since $L=[A,A]\subseteq Q\oplus R$, $x=q+r$ for some
$q\in Q$ and $r\in R$. As $RA=0$, for all $b,b'\in B$ we have
\[
\{b,x,b'\}=bxb'+b'xb=b(q+r)b'+b'(q+r)b=bqb'+b'qb=\{b,q,b'\}\in B,
\]
i.e. $B$ is an inner ideal of $L$, as required. 
\end{proof}
Recall that $A$ is a $1$-perfect finite dimensional associative
algebra, $R$ is the radical of $A$ with $R^{2}=0$ and $S$ is a
Levi subalgebra of $A$, so by Proposition \ref{prop:A is strongly L=00003D=00005BA,A=00005D},
$L=[A,A]$ is a perfect Lie algebra, $Q=[S,S]$ is a quasi Levi subalgebra
of $L$ and $L=Q\oplus N$ is a quasi Levi decomposition of $L$,
where $N=[S,R]$. 
\begin{prop}
\label{prop:B splits in AR=00003D0 =000026 R irreducible} Theorem
\ref{thm:minimal splits if R^2=00003D0} holds if $A/R$ is simple,
$RA=0$ and $R$ is an irreducible left $A$-module. Moreover, $B\subseteq S'$
for some Levi subalgebra $S'$ of $A$. 
\end{prop}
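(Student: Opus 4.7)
The plan is to find a Levi subalgebra $S'$ of $A$ that actually contains $B$, which simultaneously proves both the splitting and the moreover statement. Fix a Levi subalgebra $S$ of $A$, so $A = S \oplus R$. By Lemma \ref{lem:(i) radL  (ii) B }(i), $N = R$, hence $\bar{L} = L/R = [S,S] = Q$. Since $A/R \cong S$ is simple, Lemma \ref{B=00003DeAf if A is semi Artinian } applied to the Jordan-Lie inner ideal $\bar{B}$ of $Q$ yields orthogonal idempotents $e, f \in S$ with $\bar{B} = eSf$. Set $X := eSf$, $M := B \cap R$, and pick a linear section $\sigma\colon X \to B$ of the surjection $B \twoheadrightarrow X$; write $\sigma(x) = x + \alpha(x)$ for some linear $\alpha \colon X \to R$, so that $B = \sigma(X) + M$.

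The relations $R^{2} = RA = 0$ collapse the Jordan-Lie conditions on $B$ to computable identities. From $B^{2} = 0$ I would extract $X\alpha(X) = 0$ and $XM = 0$; since $R$ is the natural, faithful, irreducible left $S$-module and $e \neq 0$, these simplify to $f\alpha(X) = 0 = fM$. Expanding the Jordan triple product $\{b,\ell,b'\} = b\ell b' + b'\ell b$ for $b, b' \in \sigma(X)$ and $\ell = q + t \in Q \oplus R = L$, and repeatedly using $RA = R^{2} = 0$, I find its $S$-component is $\{x,q,x'\} \in X$ and its $R$-component is $xq\alpha(x') + x'q\alpha(x)$; requiring $\{b,\ell,b'\} \in B$ therefore produces the Jordan cocycle identity
\[
xq\alpha(x') + x'q\alpha(x) - \alpha(\{x,q,x'\}) \in M, \qquad x, x' \in X,\ q \in Q.
\]

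The heart of the argument is to produce $r \in fR$ such that $\alpha(x) - xr \in M$ for every $x \in X$. For the putative coboundary $\alpha_{r}(x) := xr$ the cocycle identity above is satisfied exactly, so the task is cohomological. Because $RA = 0$, the Leibniz rule collapses so that every derivation $\phi \colon S \to R$ is determined by $\phi(1_{S})$ and has the form $\phi(s) = sr_{0}$; in particular, every such derivation is inner. Combined with the fact that $R$ is simple and faithful as an $S$-module, the relevant Jordan $H^{1}$ vanishes, and $r$ can be extracted from the cocycle equation by solving a linear system in a matrix realization of $S = M_{n}(\bbF)$ with $e$ and $f$ chosen diagonal. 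This cohomological step is the main obstacle.

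Given such $r$, I modify the section to $\sigma'(x) := \sigma(x) + (xr - \alpha(x))$; since $xr - \alpha(x) \in M \subseteq B$, $\sigma'$ still maps into $B$, and $\sigma'(x) = x + xr$. Because $R^{2} = 0$ forces $(\ad r)^{2} = 0$, the map $\psi := 1 + \ad(-r)$ is an algebra automorphism of $A$, and a direct computation using $rs = 0$ gives $\psi(s) = s + sr$ for all $s \in S$, so $\psi(eSf) = \sigma'(X)$. Since $eSf$ is a Jordan-Lie inner ideal of $Q$, hence of $L$ by Lemma \ref{lem:(i) radL  (ii) B }(ii), its $\psi$-image $\sigma'(X)$ is a Jordan-Lie inner ideal of $L$ contained in $B$ with $\overline{\sigma'(X)} = X = \bar{B}$. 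Bar-minimality of $B$ now forces $\sigma'(X) = B$, whence $M = 0$ and $B \subseteq S' := \psi(S)$, a Levi subalgebra of $A$. This yields the moreover statement and gives the splitting $B = (B \cap S') \oplus (B \cap R) = B \oplus 0$.
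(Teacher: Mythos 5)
Your setup and your endgame are both sound and in fact mirror the paper's argument: the paper also reduces to showing that, after an inner automorphism $\varphi(a)=(1+q)a(1-q)$ with $q\in R$ (your $r$), the chosen coset representatives of $\bar{B}=eSf$ land inside a Levi subalgebra, and then invokes Lemma \ref{lem:(i) radL  (ii) B } together with bar-minimality to conclude $B=B_{S'}\subseteq S'$. Your derivation of the triple-product cocycle identity $xq\alpha(x')+x'q\alpha(x)-\alpha(\{x,q,x'\})\in M$ is correct, as is the observation that $\alpha_r(x)=xr$ satisfies it exactly and that $\psi=1+\operatorname{\rm ad}(-r)$ is an automorphism fixing $R$ pointwise.

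The genuine gap is exactly the step you yourself flag as ``the main obstacle'': you never produce the element $r$. The two justifications you offer do not do the job. The fact that every derivation $S\to R$ is of the form $s\mapsto sr_{0}$ concerns maps defined on all of $S$ satisfying the Leibniz rule, whereas $\alpha$ is defined only on $X=eSf$ and satisfies a Jordan-triple cocycle condition \emph{modulo the unknown subspace $M$}; no reduction from one to the other is given, and there is no off-the-shelf ``Jordan $H^{1}$ vanishes'' theorem being applied. That the resulting linear system is solvable is precisely the content of the paper's proof, and it is not routine: writing $\alpha(e_{st})=\sum_i\alpha_i^{st}e_i$, the cocycle with $x=x'=e_{st}$, $q=e_{ts}$ only yields $\alpha(e_{st})\equiv\alpha_s^{st}e_s\pmod M$, and a map with coefficients $\alpha_s^{st}$ depending on both $s$ and $t$ is \emph{not} of the form $x\mapsto xr$. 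One needs a second, less obvious instance of the cocycle (the paper uses $\{b_{st},e_{t1},b_{1t}\}$) to force $\alpha_s^{st}=\alpha_1^{1t}=:\beta_t$, independent of $s$, after which $r=\sum_{j\ge l}\beta_j e_j$ works. Until you carry out this extraction (or give a genuine cohomological argument covering cocycles on $eSf$ modulo $M$), the proof is a correct reduction plus an unproven central claim.
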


\begin{proof}
By Lemma \ref{lem:(i) radL  (ii) B }, $R$ coincides with the nil-radical
$N$ of $L$. We identify $\bar{A}$ with $S$. Recall that $B$ is
bar-minimal. We are going to prove that there is a Levi subalgebra
$S'$ of $A$ such that $B\subseteq S'$, so $B$ splits in $A$.
Since $S\cong A/R$ is simple, by Lemma \ref{lem:X=00003Dspan=00007Be:1<i<k<l<j<n=00007D},
there is a matrix realization $\ccM_{n}$ of $S$ and integers $1\leq k<l\leq n$
such that $\bar{B}$ is the space spanned by $E=\{e_{st}\mid1\leq s\leq k<l\leq t\leq n\}$
where $\{e_{ij}\mid1\leq i,j\leq n\}$ is the standard basis of $S$
consisting of matrix units. Since $R$ is an irreducible left $S$-module,
it can be identified with the natural $n$-dimensional left $S$-module
$V$. Let $\{e_{1},e_{2},\ldots,e_{n}\}$ be the standard basis of
$V$. Fix $b_{st}^{(1)}\in B$ such that $\overline{b_{st}^{(1)}}=e_{st}$
for all $s$ and $t$. Then $b_{st}^{(1)}=e_{st}+r_{st}$, where $r_{st}\in R$.
Put 
\[
\Lambda_{1}=\{b_{st}^{(1)}=e_{st}+r_{st}:1\leq s\leq k<l\leq t\leq n\}\subseteq B.
\]
 Since $e_{ts}\in L$, by Lemma \ref{lem:Jordan-Lie }, $b_{st}^{(2)}=b_{st}^{(1)}e_{ts}b_{st}^{(1)}\in B$.
Let $r_{st}=\sum_{i=1}^{n}\alpha_{i}^{st}e_{i}$, where $\alpha_{i}^{st}\in\bbF$.
Then 
\[
b_{st}^{(2)}=b_{st}^{(1)}e_{ts}b_{st}^{(1)}=(e_{st}+\sum_{i=1}^{n}\alpha_{i}^{st}e_{i})e_{ts}(e_{st}+\sum_{i=1}^{n}\alpha_{i}^{st}e_{i})=e_{ss}(e_{st}+\sum_{i=1}^{n}\alpha_{i}^{st}e_{i})=e_{st}+\alpha_{s}^{st}e_{s}.
\]
Hence, the set 
\[
\Lambda_{2}=\{b_{st}^{(2)}=e_{st}+\alpha_{s}^{st}e_{s}:1\leq s\leq k<l\leq t\leq n\}\subseteq B.
\]
Put $b_{1t}^{(3)}=b_{1t}^{(2)}=e_{1t}+\alpha_{1}^{1t}e_{1}$ and for
$s>1$ set $b_{st}^{(3)}=\{b_{st}^{(2)},e_{t1},b_{1t}^{(2)}\}$. Then
by Lemma \ref{lem:Jordan-Lie }, $b_{st}^{(3)}\in B$. Since $RA=0$,
for $s>1$ we have 
\begin{eqnarray*}
b_{st}^{(3)} & = & \{b_{st}^{(2)},e_{t1},b_{1t}^{(2)}\}=b_{st}^{(2)}e_{t1}b_{1t}^{(2)}+b_{1t}^{(2)}e_{t1}b_{st}^{(2)}\\
 & = & (e_{st}+\alpha_{s}^{st}e_{s})e_{t1}b_{1t}^{(2)}+(e_{1t}+\alpha_{1}^{1t}e_{t})e_{t1}b_{st}^{(2)}\\
 & = & e_{s1}(e_{1t}+\alpha_{1}^{1t}e_{1})+e_{11}(e_{st}+\alpha_{s}^{st}e_{s})\\
 & = & e_{st}+\alpha_{1}^{1t}e_{s}.
\end{eqnarray*}
Denote $\beta_{t}=\alpha_{1}^{1t}$ for all $t$. Then $b_{st}^{(3)}=e_{st}+\beta_{t}e_{s}\in B$
for all $s$ and $t$. Thus 
\[
\Lambda_{3}=\{b_{st}^{(3)}=e_{st}+\beta_{t}e_{s}:1\leq s\leq k<l\leq t\leq n\}\subseteq B.
\]
 Let $q=\sum_{j=l}^{n}\beta_{j}e_{j}\in R$. Then $q^{2}\in R^{2}=0$.
Define the  special inner automorphism $\varphi:A\rightarrow A$ by
$\varphi(a)=(1+q)a(1-q)$ for all $a\in A$. Since $RA=0$, by applying
$\varphi$ to all $b_{st}^{(3)}\in\Lambda_{3}$ we obtain 
\begin{eqnarray*}
\varphi(b_{st}^{(3)}) & = & (1+\sum_{j=l}^{n}\beta_{j}e_{j})(e_{st}+\beta_{t}e_{s})(1-q)\\
 & = & (e_{st}+\beta_{t}e_{s})(1-\sum_{j=l}^{n}\beta_{j}e_{j})=e_{st}+\beta_{t}e_{s}-\beta_{t}e_{s}=e_{st}\in\varphi(B)
\end{eqnarray*}
Therefore, 
\[
E=\{e_{st}\mid1\leq s\leq k<l\leq t\leq n\}\subseteq\varphi(B)\cap S.
\]
Note that $\varphi(r)=r$ for all $r\in R$. Hence, $\varphi(B)=\varphi(B)_{S}\oplus\varphi(B)_{R}$,
where $\varphi(B)_{S}=\varphi(B)\cap S$ and $\varphi(B)_{R}=\varphi(B)\cap R$.
By changing the Levi subalgebra $S$ of $A$ to $S'=\varphi^{-1}(S)$
we obtain $B=B_{S'}\oplus B_{R}$, where $B_{S'}=B\cap S'$ and $B_{R}=B\cap R$.
Therefore, $B$ splits in $A$. 

It remains to show that $B\subseteq S'$. Let $P=[B_{S'},[B_{S'},S'^{(1)}]]\subseteq S'^{(1)}\cap B$.
Since $\bar{A}$ is semisimple and $\bar{B}_{S'}=\bar{B}$, we get
that 
\[
\bar{P}=[\bar{B}_{S'},[\bar{B}_{S'},\bar{S}'^{(1)}]]=[\bar{B},[\bar{B},\bar{A}^{(1)}]]=\bar{B}.
\]
Note that $B'=B\cap S'{}^{(1)}$ is a Jordan-Lie inner ideal of $S'^{(1)}$.
As $P\subseteq B'$, we have $\bar{B}=\bar{P}=\bar{B}'$. By Lemma
\ref{lem:(i) radL  (ii) B }, $B'$ is a Jordan-Lie inner ideal of
$L$. Since $\bar{B}'=\bar{B}$ and $B$ is bar-minimal, we have $B=B'\subseteq S'$,
as required. 
\end{proof}
\begin{prop}
\label{prop:RA=00003D0} Theorem \ref{thm:minimal splits if R^2=00003D0}
holds if $A/R$ is simple and $RA=0$. 
\end{prop}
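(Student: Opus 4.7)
The plan is to adapt the proof of Proposition \ref{prop:B splits in AR=00003D0 =000026 R irreducible} with the irreducibility assumption on $R$ dropped. Since $A/R\cong S$ is simple and $\bbF$ is algebraically closed, identify $S$ with $M_n(\bbF)$. Because $RA=0$, the radical $R$ is a left $S$-module, and by semisimplicity $R\cong V^m$ for some $m\ge 0$, where $V$ is the natural $n$-dimensional $S$-module. Fix a basis $\{e_j^{(i)}:1\le i\le m,\ 1\le j\le n\}$ of $R$ with $e_{pq}\,e_j^{(i)}=\delta_{qj}\,e_p^{(i)}$, and by Lemma \ref{lem:X=00003Dspan=00007Be:1<i<k<l<j<n=00007D} choose $1\le k<l\le n$ so that $\bar B$ is spanned by $E:=\{e_{st}:1\le s\le k<l\le t\le n\}$.

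I would then repeat the three-step construction of the irreducible case verbatim, with the extra index $i$ ranging over the $m$ copies of $V$. For each admissible pair $(s,t)$, pick $b_{st}^{(1)}=e_{st}+r_{st}\in B$ with $r_{st}=\sum_{i,j}\alpha_j^{(i),st}\,e_j^{(i)}$. Since $e_{ts}\in L$, Lemma \ref{lem:Jordan-Lie } gives $b_{st}^{(2)}:=b_{st}^{(1)}\,e_{ts}\,b_{st}^{(1)}\in B$, and using $r_{st}\,e_{ts}=0$ (from $RA=0$) this simplifies to $b_{st}^{(2)}=e_{st}+\sum_i\alpha_s^{(i),st}\,e_s^{(i)}$. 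For $s>1$, set $b_{st}^{(3)}:=\{b_{st}^{(2)},e_{t1},b_{1t}^{(2)}\}\in B$; again $RA=0$ annihilates the $e^{(i)}$-summands in the outer factors, and a direct calculation yields $b_{st}^{(3)}=e_{st}+\sum_i\beta_t^{(i)}\,e_s^{(i)}$ with $\beta_t^{(i)}:=\alpha_1^{(i),1t}$ depending only on $i$ and $t$; for $s=1$ the element $b_{1t}^{(2)}$ already has this shape, so I set $b_{1t}^{(3)}:=b_{1t}^{(2)}$.

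Put $q:=\sum_{i=1}^m\sum_{j=l}^n\beta_j^{(i)}\,e_j^{(i)}\in R$. Then $q^2=0$ because $R^2=0$, and the inner automorphism $\varphi(a):=(1+q)a(1-q)$ reduces, via $qa=0$, to $\varphi(a)=a-aq$ with $\varphi|_R=\mathrm{id}_R$. A computation identical in form to the irreducible case, but summed over $i$, yields $\varphi(b_{st}^{(3)})=e_{st}$ for every admissible $(s,t)$. Setting $S':=\varphi^{-1}(S)$, we therefore obtain $b_{st}^{(3)}=\varphi^{-1}(e_{st})\in B\cap S'$ for every admissible $(s,t)$, whence $\overline{B\cap S'}=\bar B$.

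To conclude, $S'$ is a Levi subalgebra of $A$, hence a large subalgebra, and the proof of Lemma \ref{lem:(i) radL  (ii) B }(iii) uses only the relation $RA=0$ (via $brb'=b'rb=0$ for $b,b'\in B$, $r\in R$) and does not require irreducibility of $R$; it therefore applies here, showing that the Jordan-Lie inner ideal $B\cap S'$ of $[S',S']$ is also a Jordan-Lie inner ideal of $L$. Bar-minimality of $B$ then forces $B=B\cap S'\subseteq S'$, so $B$ splits in $A$ (in fact $B\subseteq S'$, as in the irreducible case). The principal technical point is verifying that the three-step construction and the conjugation by $1+q$ behave correctly when $R$ has several isotypic components, but since $RA=0$ annihilates every product $e_j^{(i)}\cdot a$, the extra index $i$ is carried passively throughout the irreducible-case computations and the extension is routine.
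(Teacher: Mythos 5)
Your proposal is correct, but it takes a genuinely different route from the paper. The paper proves this proposition by induction on the length $\ell(R)$ of $R$ as a left $S$-module: it takes a maximal submodule $T$ of $R$, applies the irreducible case to $A/T$ to find a Levi subalgebra $S_{1}$ of $A$ with $B\subseteq G=S_{1}\oplus T$, observes that $G$ is a $1$-perfect large subalgebra with smaller radical, passes to $\core_{G^{(-)}}(B)$ and a bar-minimal inner ideal inside it (Proposition \ref{prop:Cbar}), invokes the inductive hypothesis, and transfers splitting back to $A$ via Proposition \ref{lem:Large is splits}. You instead generalize the explicit matrix computation of Proposition \ref{prop:B splits in AR=00003D0 =000026 R irreducible} by carrying an isotypic index $i$ through the three-step construction and the conjugation by $1+q$; I checked the computations ($b_{st}^{(2)}$, $b_{st}^{(3)}$ with $\beta_{t}^{(i)}$ independent of $s$, and $\varphi(b_{st}^{(3)})=e_{st}$) and they go through exactly as you say, because $RA=0$ kills every term in which a radical element sits in a left factor. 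Your observation that the proof of Lemma \ref{lem:(i) radL  (ii) B }(iii) uses only $RA=0$ and not irreducibility is also correct and is needed to close the argument via bar-minimality. Two small remarks: the decomposition $R\cong V^{m}$ requires $1$-perfectness of $A$ (to exclude trivial summands annihilated by $S$ on both sides), not merely semisimplicity of $S$ — the paper states this explicitly at the start of its proof; and your argument yields the stronger conclusion $B\subseteq S'$, matching the "moreover" of the irreducible case, which the paper's inductive proof does not directly produce. The trade-off is that the paper's inductive scheme is reused verbatim for Proposition \ref{prop:AR=00003D0} and as the template for Theorem \ref{thm:minimal splits if R^2=00003D0} itself, whereas your direct computation is self-contained but would have to be redone separately in those other settings.
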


\begin{proof}
Since $A$ is $1$-perfect, $SR=R$, so $R$ as a left $S$-module
is a direct sum of copies of the natural left $S$-module $V$. The
proof is by induction on the length $\ell(R)$ of the left $S$-module
$R$, the case $\ell(R)=1$ being clear by Proposition \ref{prop:B splits in AR=00003D0 =000026 R irreducible}.
Suppose that $\ell(R)>1$. Consider any maximal submodule $T$ of
$R$. Then $\ell(T)=\ell(R)-1$ and $T$ is an ideal of $A$. Let
$\tilde{\quad}:A\rightarrow A/T$ be the natural epimorphism of $A$
onto $\tilde{A}=A/T$. Denote by $\tilde{R}$ and $\tilde{B}$ the
images of $R$ and $B$, respectively, in $\tilde{A}$. Since $\ell(\tilde{R})=1$,
by Proposition \ref{prop:B splits in AR=00003D0 =000026 R irreducible},
$\tilde{B}$ is contained in a Levi subalgebra of $\tilde{A}$. Therefore,
$B\subseteq S_{1}\oplus T$ for some Levi subalgebra $S_{1}$ of $A$.
Put $G=S_{1}\oplus T$. Then $G$ is clearly $1$-perfect (i.e. $G=\ccP_{1}(G)$),
$\rad(G)=T$, $G=S_{1}\oplus T$ is a Levi decomposition of $G$ and
$C=B\cap G^{(1)}$ is a Jordan-Lie inner ideal of $G^{(1)}=\ccP_{1}(G)^{(1)}$.
Put $P=[B,[B,G^{(1)}]]\subseteq C$. Then 
\[
\bar{P}=[\bar{B},[\bar{B},\bar{G}^{(1)}]]=[\bar{B},[\bar{B},\bar{A}^{(1)}]]=\bar{B},
\]
so $\bar{C}=\bar{B}$. Let $C'$ be any $\bar{C}$-minimal inner ideal
of $G^{(1)}$ contained in $C$. Since $G$ is $1$-perfect and $\ell(T)<\ell(R)$,
by the inductive hypothesis, $C'$ splits in $G$. Since $C'\subseteq C\subseteq B$
and $\bar{C}'=\bar{C}=\bar{B}$, by Proposition \ref{lem:Large is splits},
$B$ splits in $A$. 
\end{proof}
\begin{prop}
\label{prop:AR=00003D0} Theorem \ref{thm:minimal splits if R^2=00003D0}
holds if $A/R$ is simple and $AR=0$. 
\end{prop}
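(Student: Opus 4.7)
The plan is to deduce this from Proposition \ref{prop:RA=00003D0} by passing to the opposite algebra $A^{\mathrm{op}}$, which has the same underlying vector space as $A$ but multiplication $a \cdot^{\mathrm{op}} b = ba$. Under this reversal, two-sided ideals and their codimensions are preserved, so $A^{\mathrm{op}}$ is again $1$-perfect; its radical $R^{\mathrm{op}}$ coincides with $R$ as a subspace with $(R^{\mathrm{op}})^{2} = 0$; and $A^{\mathrm{op}}/R^{\mathrm{op}} \cong (A/R)^{\mathrm{op}}$ is simple. Crucially, the hypothesis $AR = 0$ translates into $R^{\mathrm{op}} \cdot^{\mathrm{op}} A^{\mathrm{op}} = AR = 0$, so $A^{\mathrm{op}}$ satisfies every hypothesis of Proposition \ref{prop:RA=00003D0}.

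Next I would verify that the Lie-theoretic data attached to $B$ transfer to $A^{\mathrm{op}}$. Since $[a,b]^{\mathrm{op}} = ba - ab = -[a,b]$, the derived subspaces $[A,A]$ and $[A^{\mathrm{op}}, A^{\mathrm{op}}]$ agree as subsets of $A$, and the inner ideal condition $[B,[B,L]] \subseteq B$ is invariant under a global sign change of the bracket. The Jordan-Lie condition $B^{2} = 0$ is symmetric under reversal of factor order, and the natural quotient maps $L \to \bar{L}$ for $A$ and for $A^{\mathrm{op}}$ coincide as maps of sets, so $B$ is a bar-minimal Jordan-Lie inner ideal of $[A^{\mathrm{op}}, A^{\mathrm{op}}]$ precisely when it is one of $[A,A]$.

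Applying Proposition \ref{prop:RA=00003D0} inside $A^{\mathrm{op}}$ then yields a Levi subalgebra $S'$ of $A^{\mathrm{op}}$ such that $B = (B \cap S') \oplus (B \cap R)$. Since a subset of $A$ is a semisimple subalgebra of $A$ if and only if it is a semisimple subalgebra of $A^{\mathrm{op}}$ (semisimplicity in finite dimension is preserved by anti-isomorphism, as $M_{n}(\bbF)^{\mathrm{op}} \cong M_{n}(\bbF)$ via transpose), and $A = S' \oplus R$ as vector spaces, $S'$ is also a Levi subalgebra of $A$. The same decomposition therefore realises $B$ as split in $A$ in the sense of Definition \ref{def:splitness}.

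The main point requiring care is bookkeeping: one must check that every structural notion invoked in the previous propositions (subalgebra, semisimplicity, radical, Lie bracket, inner ideal, $B^{2}$, bar-minimality) really is stable under passage to the opposite algebra. Once these verifications are in place, no new argument is needed beyond the appeal to Proposition \ref{prop:RA=00003D0}.
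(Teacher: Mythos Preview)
Your approach is correct and is a genuinely different route from the paper's. The paper simply writes ``The proof is similar to that of Proposition \ref{prop:RA=00003D0}'', meaning the reader is expected to re-run the induction on $\ell(R)$ with right $S$-modules in place of left $S$-modules and with the dual of Proposition \ref{prop:B splits in AR=00003D0 =000026 R irreducible} as the base case. Your argument instead packages this left/right duality into a single passage to $A^{\mathrm{op}}$ and then invokes Proposition \ref{prop:RA=00003D0} as a black box.

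What your approach buys is economy: rather than rewriting the module-theoretic induction, you observe once that every structural ingredient (radical, $1$-perfectness, Jordan-Lie inner ideals, bar-minimality, Levi subalgebras) is invariant under passage to the opposite algebra, and then the result is immediate. The paper's approach has the minor advantage of being self-contained at the level of modules and avoiding the bookkeeping you flag in your final paragraph, but your verifications are all routine: subalgebras and two-sided ideals coincide for $A$ and $A^{\mathrm{op}}$; the radical is the same subspace; $[A^{\mathrm{op}},A^{\mathrm{op}}]=[A,A]$ and the inner-ideal condition is insensitive to the sign of the bracket; and maximal semisimple subalgebras coincide because a finite-dimensional algebra is semisimple if and only if its opposite is. Your reduction is therefore a clean and complete alternative.
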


\begin{proof}
The proof is similar to that of Proposition \ref{prop:RA=00003D0}. 
\end{proof}
\begin{prop}
\label{prop: R is S-bimodule} Theorem \ref{thm:minimal splits if R^2=00003D0}
holds if $A/R$ is simple and $R$ is isomorphic to the natural $A/R$-bimodule
$A/R$ with respect to the right and left multiplication.
\end{prop}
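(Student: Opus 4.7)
The plan is to adapt the matrix argument of Proposition \ref{prop:B splits in AR=00003D0 =000026 R irreducible} to the bimodule setting, using the $S$-bimodule isomorphism $\phi:S\to R$ (so $s\phi(t)=\phi(st)$, $\phi(t)s=\phi(ts)$, and $R^2=0$). Fix a matrix realization $S=M_n(\bbF)$ so that, by Lemma \ref{lem:X=00003Dspan=00007Be:1<i<k<l<j<n=00007D}, $\bar B=\bar e S\bar f$ with $\bar e=\sum_{i\le k}e_{ii}$ and $\bar f=\sum_{j\ge l}e_{jj}$; write $V:=\bar eS\bar f$. Choose lifts $b_{st}^{(1)}=e_{st}+\phi(r_{st})\in B$ for each admissible $(s,t)$. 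Since $R^2=0$ and $\phi(X)\in L$ for every $X\in\sl_n$, the triple product $\{b_{st}^{(1)},\phi(X),b_{s't'}^{(1)}\}$ collapses to $\phi(e_{st}Xe_{s't'}+e_{s't'}Xe_{st})$; varying $X$ and the indices shows $\phi(V)\subseteq B$. A further refinement using $\{b_{st}^{(1)},e_{ts},b_{st}^{(1)}\}$ then replaces each $b_{st}^{(1)}$ by $b_{st}^{(2)}=e_{st}+\phi(e_{ss}r_{st}+r_{st}e_{tt})\in B$, whose residue is supported only in row $s$ and column $t$.

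The heart of the argument is to assemble these refined residues into a single $z\in S$ with $e_{ss}r_{st}+r_{st}e_{tt}\equiv[z,e_{st}]\pmod{W}$ for all admissible $(s,t)$, where $W=\{w\in S:\phi(w)\in B\}$. Since $[z,e_{st}]=ze_{st}-e_{st}z$ has entries $z_{is}$ in column $t$ (for $i\ne s$) and $-z_{tj}$ in row $s$ (for $j\ne t$), its shape matches that of the refined residues, and cross-comparisons between distinct $b_{st}^{(2)}$ via further Jordan triple products (absorbing slack into $W$) should force the row-$s$ entries of $r_{st}$ to be independent of $t$ and the column-$t$ entries to be independent of $s$, yielding a well-defined $z$. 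Once $z$ is obtained, the inner automorphism $\Phi(a)=(1+\phi(z))a(1-\phi(z))$ is well-defined since $\phi(z)^2=0$; it fixes $R$ pointwise and sends $S$ to the Levi subalgebra $S'=\{s+\phi([z,s]):s\in S\}$ of $A$. Each $v\in V$ then lifts to $\Phi(v)=v+\phi([z,v])\in B$, so $V\subseteq\Phi^{-1}(B)$; together with $\phi(V)\subseteq B$ and the $\Phi$-invariance of $R$, this yields $\bar eA\bar f=V+\phi(V)\subseteq\Phi^{-1}(B)$. Since $\bar eA\bar f$ is a Jordan-Lie inner ideal of $L$ with the same bar-image (Lemma \ref{eAf is regular}), bar-minimality of $\Phi^{-1}(B)$ forces $\Phi^{-1}(B)=\bar eA\bar f$, whence $B=\Phi(\bar e)A\Phi(\bar f)$ is generated by a strict orthogonal idempotent pair and splits in $A$ by Lemma \ref{lem:eAf splits}.

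The main obstacle I anticipate is the consistency argument producing $z$. Unlike the one-sided scenarios of Propositions \ref{prop:RA=00003D0} and \ref{prop:AR=00003D0}, here both $S$-actions on $R$ are non-trivial, so the Jordan-Lie identity couples row and column data in a non-trivial way. Extracting enough relations among the various $b_{st}^{(2)}$ to pin down $z$ uniquely modulo $W$, while carefully tracking which error terms can be absorbed into $W$, is the delicate matrix work that the argument must carry out.
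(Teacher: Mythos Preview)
Your plan is essentially the paper's own approach: matrix realization, $\phi(V)\subseteq B$ (the paper's Claim~1), the refinement $b\mapsto b\,e_{ts}\,b$ (the paper's Claim~2), and then inner automorphisms to move $V$ into the transformed $B$. The ``delicate matrix work'' you defer is exactly what the paper carries out, by a sequence of four explicit conjugations $\varphi_1,\dots,\varphi_4$ by $1+q_i$ with $q_i\in R$, interleaved with further triple-product identities \emph{and} repeated use of $B^2=0$ (not just the Jordan-Lie relations) to kill stray coefficients; since $R^2=0$ these conjugations compose to a single one, yielding your $\Phi$. One minor difference in packaging: you invoke bar-minimality at the end to obtain $\Phi^{-1}(B)=\bar e A\bar f$ directly, whereas the paper only extracts the splitting $B=B_{S'}\oplus B_R$ at this stage and postpones the idempotent description to Theorem~\ref{thm:B=00003DeAf }(ii).
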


\begin{proof}
Recall that $B$ is a Jordan-Lie inner ideal of $L=[A,A]$ such that
$B$ is bar-minimal. As in the proof of Proposition \ref{prop:B splits in AR=00003D0 =000026 R irreducible},
we fix standard bases $\{e_{ij}\mid1\leq i,j\leq n\}$ and $\{f_{ij}\mid1\leq i,j\leq n\}$
of $S$ and $R$, respectively, consisting of matrix units, such that
the action of $S$ on $R$ corresponds to matrix multiplication and
$\bar{B}$ is the space spanned by $E=\{e_{st}\mid1\leq s\leq k<l\leq t\leq n\}\subseteq S$.
We identify $\bar{A}$ with $S$. We are going to prove that there
is a Levi subalgebra $S'$ of $A$ such that $B=B_{S'}\oplus B_{R}$,
where $B_{S'}=B\cap S'$ and $B_{R}=B\cap R$ . Put 
\[
R_{0}=span\{f_{st}\mid1\leq s\leq k<l\leq t\leq n\}\subseteq N.
\]

\noun{Claim 1:} $R_{0}\subseteq B$. Fix any $b_{st}\in B$ such that
$\bar{b}_{st}=e_{st}$. Then $b_{st}=e_{st}+r_{st}$, with $r_{st}\in N$.
By Lemma \ref{lem:Jordan-Lie }, $b_{st}f_{ts}b_{st}\in B$. Since
$R^{2}=0$, we have
\[
b_{st}f_{ts}b_{st}=(e_{st}+r_{st})f_{ts}(e_{st}+r_{st})=f_{ss}(e_{st}+r_{st})=f_{st}.
\]
 Therefore, $f_{st}\in B$ for all $s$ and $t$ as required. 

\noun{Claim 2: }For every $b_{st}=e_{st}+\sum_{i,j=1}^{n}\alpha_{ij}^{st}f_{ij}\in B$
we have 
\[
\theta(b_{st})=e_{st}+\sum_{i>k}\alpha_{it}^{st}f_{it}+\sum_{j<l}\alpha_{sj}^{st}f_{sj}\in B.
\]
 Since $b_{st}\in B$, by Lemma \ref{lem:Jordan-Lie }, $b_{st}e_{ts}b_{st}\in B$.
We have 
\begin{eqnarray*}
b_{st}e_{ts}b_{st} & = & (e_{st}+\sum_{i,j}^{n}\alpha_{ij}^{st}f_{ij})e_{ts}b_{st}=(e_{ss}+\sum_{i}^{n}\alpha_{it}^{st}f_{is})(e_{st}+\sum_{i,j}^{n}\alpha_{ij}^{st}f_{ij})\\
 & = & e_{st}+\sum_{i}^{n}\alpha_{it}^{st}f_{it}+\sum_{j}^{n}\alpha_{sj}^{st}f_{sj}=\theta(b_{st})+\sum_{i=1}^{k}\alpha_{it}^{st}f_{it}+\sum_{j=l}^{n}\alpha_{sj}^{st}f_{sj}.
\end{eqnarray*}
Since $\sum_{i=1}^{k}\alpha_{it}^{st}f_{it}+\sum_{j=l}^{n}\alpha_{sj}^{st}f_{sj}\in R_{0}\subseteq B$
and $b_{st}e_{ts}b_{st}\in B$, we have $\theta(b_{st})\in B$ as
required. 

By claim 2, there are some $\alpha_{ij}^{st}\in\bbF$ such that 
\[
b_{st}=e_{st}+\sum_{i>k}\alpha_{it}^{st}f_{it}+\sum_{j<l}\alpha_{sj}^{st}f_{sj}\in B,
\]
for all $1\leq s\leq k<l\leq t\leq n$. 

(1) Define the special inner automorphism $\varphi_{1}:A\rightarrow A$
by $\varphi_{1}(a)=(1+q_{1})a(1-q_{1})$ for all $a\in A$, where
\[
q_{1}=\sum_{j<l}\alpha_{1j}^{1n}f_{nj}-\sum_{i>k}\alpha_{in}^{1n}f_{i1}\in R.
\]
Put $B_{1}=\varphi_{1}(B)$. Set $b_{st}^{(1)}=\varphi_{1}(b_{st})$
for all $s$ and $t$. Then 
\begin{eqnarray*}
b_{1n}^{(1)} & = & (1+q_{1})b_{1n}(1-q_{1})\\
 & = & (1+\sum_{j<l}\alpha_{1j}^{1n}f_{nj}-\sum_{i>k}\alpha_{in}^{1n}f_{i1})(e_{1n}+\sum_{i>k}\alpha_{in}^{1n}f_{in}+\sum_{j<l}\alpha_{1j}^{1n}f_{1j})(1-q_{1})\\
 & = & (e_{1n}+\sum_{i>k}\alpha_{in}^{1n}f_{in}+\sum_{j<l}\alpha_{1j}^{1n}f_{1j}+\alpha_{11}^{1n}f_{nn}-\sum_{i>k}\alpha_{in}^{1n}f_{in})(1-q_{1})\\
 & = & (e_{1n}+\sum_{j<l}\alpha_{1j}^{1n}f_{1j}+\alpha_{11}^{1n}f_{nn})(1-\sum_{j<l}\alpha_{1j}^{1n}f_{nj}+\sum_{i>k}\alpha_{in}^{1n}f_{i1})\\
 & = & e_{1n}+\sum_{j<l}\alpha_{1j}^{1n}f_{1j}+\alpha_{11}^{1n}f_{nn}-\sum_{j<l}\alpha_{1j}^{1n}f_{1j}+\alpha_{nn}^{1n}f_{11}\\
 & = & e_{1n}+\alpha_{11}^{1n}f_{nn}+\alpha_{nn}^{1n}f_{11}.
\end{eqnarray*}
 Since $(B_{1})^{2}=0$, we have 
\begin{eqnarray*}
0 & = & (b_{1n}^{(1)})^{2}=(e_{1n}+\alpha_{11}^{1n}f_{nn}+\alpha_{nn}^{1n}f_{11})(e_{1n}+\alpha_{11}^{1n}f_{nn}+\alpha_{nn}^{1n}f_{11})\\
 & = & \alpha_{11}^{1n}f_{1n}+\alpha_{nn}^{1n}f_{1n}=(\alpha_{11}^{1n}+\alpha_{nn}^{1n})f_{1n}.
\end{eqnarray*}
Thus, $\alpha_{11}^{1n}=-\alpha_{nn}^{1n}$. Put $\alpha=\alpha_{11}^{1n}$.
Then 
\begin{equation}
b_{1n}^{(1)}=e_{1n}+\alpha f_{11}-\alpha f_{nn}\in B_{1}\label{eq: s=00003D1 =000026 d=00003Dn}
\end{equation}

(2) Consider the special inner automorphism $\varphi_{2}:A\rightarrow A$
defined by $\varphi_{2}(a)=(1+\alpha f_{n1})a(1-\alpha f_{n1})$ for
all $a\in A$. Put $B_{2}=\varphi_{2}(B_{1})$. Then by applying $\varphi_{2}$
to (\ref{eq: s=00003D1 =000026 d=00003Dn}), we obtain 
\begin{eqnarray*}
b_{1n}^{(2)} & = & \varphi_{2}(b_{1n}^{(1)})=(1+\alpha f_{n1})(e_{1n}+\alpha f_{11}-\alpha f_{nn})(1-\alpha f_{n1})\\
 & = & (e_{1n}+\alpha f_{11}-\alpha f_{nn}+\alpha f_{nn})(1-\alpha f_{n1})=e_{1n}+\alpha f_{11}-\alpha f_{11}=e_{1n}\in B_{2}.
\end{eqnarray*}
Put $b_{st}^{(2)}=\theta(\varphi_{2}(b_{st}^{(1)}))\in B_{2}$ for
all $s$ and $t$. Then $b_{st}^{(2)}=e_{st}+\underset{i>k}{\sum}\beta_{it}^{st}f_{it}+\underset{j<l}{\sum}\beta_{sj}^{st}f_{sj}$,
where $\beta_{ij}^{st}\in\bbF$. 

Put $b_{1n}^{(3)}=b_{1n}^{(2)}=e_{1n}$, $b_{st}^{(3)}=b_{st}^{(2)}$
for $t\neq n$ and $b_{sn}^{(3)}=\{b_{sn}^{(2)},e_{n1},e_{1n}\}$
for $s\neq1$. Then by Lemma \ref{lem:Jordan-Lie }, $b_{sn}^{(3)}\in B_{2}$
for all $s$ and $t$. Thus, for $s\neq1$ we have 
\begin{eqnarray}
b_{sn}^{(3)} & = & \{b_{sn}^{(2)},e_{n1},e_{1n}\}=b_{sn}^{(2)}e_{n1}e_{1n}+e_{1n}e_{n1}b_{sn}^{(2)}=b_{sn}^{(2)}e_{nn}+e_{11}b_{sn}^{(2)}\nonumber \\
 & = & (e_{sn}+\sum_{i>k}\beta_{in}^{sn}f_{in}+\sum_{j<l}\beta_{sj}^{sn}f_{sj})e_{nn}+e_{11}(e_{sn}+\sum_{i>k}\beta_{in}^{sn}f_{in}+\sum_{j<l}\beta_{sj}^{sn}f_{sj})\nonumber \\
 & = & e_{sn}+\sum_{i>k}\beta_{in}^{sn}f_{in}\in B_{2}.\label{eq:eq:b^3 in B2}
\end{eqnarray}
Note that $b_{1n}^{(3)}=e_{1n}$ is also of the shape (\ref{eq:eq:b^3 in B2})
with all $\beta_{in}^{1n}=0$. 

(3) Consider the special inner automorphism $\varphi_{3}:A\rightarrow A$
defined by $\varphi_{3}(a)=(1+q_{3})a(1-q_{3})$ for $a\in A$, where
\[
q_{3}=-\sum_{i>k}\sum_{j=2}^{k}\beta_{in}^{jn}f_{ij}.
\]
Put $B_{3}=\varphi_{3}(B_{2})$ and $b_{st}^{(4)}=\varphi_{3}(b_{st}^{(3)})\in B_{3}$.
By applying $\varphi_{3}$ to $b_{sn}^{(3)}$ in (\ref{eq:eq:b^3 in B2})
(for all $s$), we obtain 
\begin{eqnarray}
b_{sn}^{(4)} & = & \varphi_{3}(b_{sn}^{(3)})=(1+q_{3})b_{sn}^{(3)}(1-q_{3})=(1-\sum_{i>k}\sum_{j=2}^{k}\beta_{in}^{jn}f_{ij})(e_{sn}+\sum_{i>k}\beta_{in}^{sn}f_{in})(1-q_{3})\nonumber \\
 & = & (e_{sn}+\sum_{i>k}\beta_{in}^{sn}f_{in}-\sum_{i>k}\beta_{in}^{sn}f_{in})(1+\sum_{i>k}\sum_{j=2}^{k}\beta_{in}^{jn}f_{ij})=e_{sn}+\sum_{j=2}^{k}\beta_{nn}^{jn}f_{sj}\in B_{3}\label{eq:b^(4)}
\end{eqnarray}
 Since $(B_{3})^{2}=0$, for all $1\leq s,p\leq k$ we have 
\[
0=b_{sn}^{(4)}b_{rn}^{(4)}=(e_{sn}+\sum_{j=2}^{k}\beta_{nn}^{jn}f_{sj})(e_{rn}+\sum_{j=2}^{n}\beta_{nn}^{jn}f_{rj})=\beta_{nn}^{rn}f_{sn}.
\]
Thus, $\beta_{nn}^{rn}=0$ for all $1\leq r\leq k$. Substituting
in (\ref{eq:b^(4)}) we obtain 
\[
b_{sn}^{(4)}=e_{sn}\in B_{3}\text{ for all }1\le s\le k.
\]

Put $b_{sn}^{(5)}=b_{sn}^{(4)}=e_{sn}$ and $b_{st}^{(5)}=\theta(b_{st}^{(4)})\in B_{3}$
for $t\neq n$. Then for $t\neq n$ we have 
\[
b_{st}^{(5)}=e_{st}+\sum_{i>k}\gamma_{it}^{st}f_{it}+\sum_{j<l}\gamma_{sj}^{st}f_{sj}\text{ for some }\gamma_{ij}^{st}\in\bbF.
\]

Put $b_{sn}^{(6)}=b_{sn}^{(5)}=e_{sn}$ and $b_{st}^{(6)}=\{e_{sn},e_{n1},b_{1t}^{(5)}\}$
for all $t\neq n$. Then by Lemma \ref{lem:Jordan-Lie }, $b_{st}^{(6)}\in B_{3}$.
Thus, for $t\neq n$ we have 
\begin{eqnarray*}
b_{st}^{(6)} & = & \{e_{sn},e_{n1},b_{1t}^{(5)}\}=e_{sn}e_{n1}b_{1t}^{(5)}+b_{1t}^{(5)}e_{n1}e_{sn}=e_{s1}b_{1t}^{(5)}+0\\
 & = & e_{s1}(e_{1t}+\sum_{i>k}\gamma_{it}^{1t}f_{it}+\sum_{j<l}\gamma_{1j}^{1t}f_{1j})=e_{st}+\sum_{j<l}\gamma_{1j}^{1t}f_{sj}.
\end{eqnarray*}

(4) We define the final special inner automorphism $\varphi_{4}:A\rightarrow A$
by $\varphi_{4}(a)=(1+q_{4})a(1-q_{4})$ for $a\in A$, where 
\[
q_{4}=\sum_{i=l}^{n-1}\sum_{j<l}\gamma_{1j}^{1i}f_{ij}.
\]
Put $B_{4}=\varphi_{4}(B_{3})$ and $b_{st}^{(7)}=\varphi_{4}(b_{st}^{(6)})\in B_{4}$
for all $s$ and $t$. Then 
\begin{eqnarray*}
b_{sn}^{(7)} & = & (\varphi_{4}(b_{sn}^{(6)}))=(1+q_{4})b_{sn}^{(6)}(1-q_{4})=(1+\sum_{i=l}^{n-1}\sum_{j<l}\gamma_{1j}^{1i}f_{ij})e_{sn}(1-q_{4})\\
 & = & (e_{sn}+\sum_{i=l}^{n-1}\gamma_{1s}^{1i}f_{in})(1-\sum_{i=l}^{n-1}\sum_{j<l}\gamma_{1j}^{1i}f_{ij})=e_{sn}+\sum_{i=l}^{n-1}\gamma_{1s}^{1i}f_{in}\in B_{4}
\end{eqnarray*}
and (for all $t<n$) 
\begin{eqnarray*}
b_{st}^{(7)} & = & (\varphi_{4}(b_{st}^{(6)}))=(1+q_{4})b_{st}^{(6)}(1-q_{4})\\
 & = & (1+\sum_{i=l}^{n-1}\sum_{j<l}\gamma_{1j}^{1i}f_{ij})(e_{st}+\sum_{j<l}\gamma_{1j}^{1t}f_{sj})(1-q_{4})\\
 & = & (e_{st}+\sum_{j<l}\gamma_{1j}^{1t}f_{sj}+\sum_{i=l}^{n-1}\gamma_{1s}^{1i}f_{it})(1-\sum_{i=l}^{n-1}\sum_{j<l}\gamma_{1j}^{1i}f_{ij})\\
 & = & e_{st}-\sum_{j<l}\gamma_{1j}^{1t}f_{sj}+\sum_{j<l}\gamma_{1j}^{1t}f_{sj}+\sum_{i=l}^{n-1}\gamma_{1s}^{1i}f_{it}\\
 & = & e_{st}+\sum_{i=l}^{n-1}\gamma_{1s}^{1i}f_{it}\in B_{4},
\end{eqnarray*}
so 
\begin{equation}
b_{st}^{(7)}=e_{st}+\sum_{i=l}^{n-1}\gamma_{1s}^{1i}f_{it}\hspace*{1em}\text{for all}\hspace*{1em}1\leq s\leq k<l\leq t\leq n.\label{eq:b^(7)}
\end{equation}
Since $(B_{4})^{2}=0$, for all $l\le q\le n-1$, we have 
\[
0=b_{sq}^{(7)}b_{st}^{(7)}=(e_{sq}+\sum_{i=l}^{n-1}\gamma_{1s}^{1i}f_{iq})(e_{st}+\sum_{i=l}^{n-1}\gamma_{1s}^{1i}f_{it})=\gamma_{1s}^{1q}f_{st}.
\]
Thus, $\gamma_{1s}^{1i}=0$ for all $1\leq i\leq n-1$. Substituting
in (\ref{eq:b^(7)}) we obtain $b_{st}^{(7)}=e_{st}$ for all $s$
and $t$. Thus, 
\[
E=\{e_{st}\mid1\leq s\leq k<l\leq t\leq n\}\subseteq B_{4}\cap S.
\]
Denote by $\varphi$ the automorphism $\varphi_{4}\circ\varphi_{3}\circ\varphi_{2}\circ\varphi_{1}$
of $A$ and $L$. Then we have $E\subseteq\varphi(B)\cap S$. Note
that $\varphi_{i}(R_{0})=R_{0}$ for all $i=1,2,3,4$ (because $R^{2}=0$).
Hence, $\varphi(B)=\varphi(B)_{S}\oplus\varphi(B)_{R}$, where $\varphi(B)_{S}=\varphi(B)\cap S$
and $\varphi(B)_{R}=\varphi(B)\cap R=B_{R}$. Now, by changing the
Levi subalgebra $S$ to $S'=\varphi^{-1}(S)$ we obtain $B=B_{S}\oplus B_{R}$,
where $B_{S'}=B\cap S'$ and $B_{R}=B\cap R$. 
\end{proof}
\begin{prop}
\label{prop: A=00003DS+R, S=00003DS1+S2} Theorem \ref{thm:minimal splits if R^2=00003D0}
holds if $A/R\cong S_{1}\oplus S_{2}$, where $S_{1}\cong M_{n_{1}}(\bbF)$,
$S_{2}\cong M_{n_{2}}(\bbF)$ and $R\cong M_{n_{1}n_{2}}(\bbF)$ as
an $S_{1}$-$S_{2}$-bimodule such that $RS_{1}=S_{2}R=0$. 
\end{prop}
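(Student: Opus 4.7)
Proof proposal: follow the strategy of Proposition \ref{prop: R is S-bimodule}, adapted to two simple components. Identify $\bar A = S_1 \oplus S_2$ via the Levi decomposition and fix matrix-unit bases $\{e^{(1)}_{ij}\}$, $\{e^{(2)}_{ij}\}$ of $S_1 = M_{n_1}(\bbF)$ and $S_2 = M_{n_2}(\bbF)$, together with a bimodule basis $\{r_{ij}\colon 1 \le i \le n_1,\ 1 \le j \le n_2\}$ of $R$ satisfying $e^{(1)}_{ij}r_{kl}=\delta_{jk}r_{il}$, $r_{ij}e^{(2)}_{kl}=\delta_{jk}r_{il}$, and $RS_1 = S_2R = R^2 = 0$. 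By Proposition \ref{prop:A is strongly L=00003D=00005BA,A=00005D}, $L = Q \oplus N$ with $Q = \sl_{n_1} \oplus \sl_{n_2}$ and $N = R$. Since $A$ is $1$-perfect, $n_1, n_2 \ge 2$. By Lemmas \ref{lem:B=00003DB1+...} and \ref{lem:X=00003Dspan=00007Be:1<i<k<l<j<n=00007D}, after applying an inner automorphism $\mathrm{Inn}(g_1+g_2)$ with $g_i \in S_i^{\times}$, we may assume $\bar B = \bar B_1 \oplus \bar B_2$ with $\bar B_i = \mathrm{span}\{e^{(i)}_{st}\colon 1 \le s \le k_i < l_i \le t \le n_i\}$.

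Handle the degenerate cases first. If $\bar B = 0$ then $B \subseteq R$ and the splitting is trivial. If exactly one $\bar B_i$ vanishes, say $\bar B_2 = 0$, then $B \subseteq \sl_{n_1} + R = A_1^{(1)}$ for the subalgebra $A_1 = S_1 + R$ of $A$. Here $A_1$ is $1$-perfect with $\rad(A_1) = R$, $A_1/R \cong S_1$ simple, $RA_1 = 0$, and $R^2 = 0$. The inclusions $[R, \sl_{n_2}] \subseteq R \subseteq A_1^{(1)}$ show that every inner ideal of $A_1^{(1)}$ contained in $B$ is also an inner ideal of $L$, so $B$ is bar-minimal in $A_1^{(1)}$. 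Proposition \ref{prop:RA=00003D0} then splits $B$ in $A_1$ via a Levi $S_1' \subseteq A_1$, and Wedderburn--Malcev extends $S_1'$ to a Levi $S$ of $A$, yielding the splitting of $B$ in $A$. The case $\bar B_1 = 0$ is symmetric via Proposition \ref{prop:AR=00003D0}.

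Assume both $\bar B_i$ are non-zero and fix lifts $b^{(i)}_{st} = e^{(i)}_{st} + r^{(i)}_{st} \in B$ with $r^{(i)}_{st} \in R$. For $s \le k_1,\ t \ge l_1,\ p \le k_2,\ q \ge l_2$, the Jordan triple product collapses using $R^2 = RS_1 = S_2R = 0$ to
\[
\{b^{(1)}_{st}, r_{tp}, b^{(2)}_{pq}\} = r_{sq} \in B,
\]
so $R_0 := \mathrm{span}\{r_{sq}\colon s \le k_1,\ q \ge l_2\} \subseteq B$. Next, the triples $b^{(1)}_{st} e^{(1)}_{ts} b^{(1)}_{st}$ and $b^{(2)}_{pq} e^{(2)}_{qp} b^{(2)}_{pq}$ trim $r^{(1)}_{st}$ to the row-$s$ part of $R$ and $r^{(2)}_{pq}$ to the column-$q$ part; modulo $R_0$ these further reduce to $\mathrm{span}\{r_{sj}\colon j < l_2\}$ and $\mathrm{span}\{r_{iq}\colon i > k_1\}$ respectively. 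Linking triples such as $\{b^{(1)}_{st}, e^{(1)}_{t1}, b^{(1)}_{1t}\}$ (and the $S_2$-analogue) identify the residual coefficients across generators sharing a row or column index, so the remaining data is encoded by a single pair $\rho_1 \in \mathrm{span}\{r_{tj}\colon t \ge l_1,\ j < l_2\}$ and $\rho_2 \in \mathrm{span}\{r_{iq}\colon i > k_1,\ q \ge l_2\}$, with $B^2 = 0$ enforcing the required compatibility.

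Finally, apply the composite inner automorphism $\varphi = \varphi_{\rho_1+\rho_2}$, where $\varphi_\rho(a) = (1+\rho) a (1-\rho)$ is well-defined since $\rho^2 \in R^2 = 0$. This $\varphi$ fixes $R$ pointwise and acts by $e^{(1)}_{st} \mapsto e^{(1)}_{st} - e^{(1)}_{st}\rho$ and $e^{(2)}_{pq} \mapsto e^{(2)}_{pq} + \rho e^{(2)}_{pq}$; by the choice of $\rho_1, \rho_2$ it absorbs the residual radical components of every generator, so $e^{(i)}_{st} \in \varphi(B)$ for all admissible $(s,t)$. Since $\varphi$ fixes $R$, we have $\varphi(B) = (\varphi(B) \cap S) \oplus (B \cap R)$, and passing to the Levi $S' := \varphi^{-1}(S)$ produces the desired splitting $B = (B \cap S') \oplus (B \cap R)$. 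The main obstacle is the coherence step in the previous paragraph: the residual coefficients couple $\bar B_1$- and $\bar B_2$-lifts, and one must use $B^2 = 0$ carefully to verify that a single inner automorphism simultaneously normalises every generator. This parallels the identification $\alpha_{11}^{1n} = -\alpha_{nn}^{1n}$ in the proof of Proposition \ref{prop: R is S-bimodule}, and should extend with heavier but essentially routine bookkeeping.
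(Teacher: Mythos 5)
Your outline is correct and runs on essentially the same machinery as the paper's proof: Jordan triple products against the bimodule matrix units force $R_{0}\subseteq B$, the products $b\,e\,b$ trim the radical correction of each lift to a single row (resp.\ column), linking triples make the surviving coefficients independent of the row (resp.\ column) index, and a final inner automorphism $(1+q)(\cdot)(1-q)$ with $q\in R$ absorbs them, with $B^{2}=0$ supplying the compatibility between the $S_{1}$- and $S_{2}$-lifts. The one structural difference is that the paper does not treat the two halves symmetrically: it first applies Proposition \ref{prop:AR=00003D0} to $B\cap[S_{2}\oplus R,S_{2}\oplus R]$ (which is bar-minimal by Lemma \ref{lem:B=00003DB1+B2 if L=00003DL1+L2}) so that all $S_{2}$-lifts are already the bare matrix units, and only then normalises the $S_{1}$-lifts; after that reduction the ``coherence step'' you flag as the main obstacle collapses to the observation that $B^{2}=0$ forces the overlapping coefficients to vanish outright, so your worry about simultaneously normalising both families is avoidable rather than essential. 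Your explicit treatment of the degenerate cases $\bar{B}_{i}=0$ (reducing to Propositions \ref{prop:RA=00003D0} and \ref{prop:AR=00003D0} after checking that inner ideals of $[S_{1}\oplus R,S_{1}\oplus R]$ killed by $S_{2}$ on the relevant side are inner ideals of $L$) is a point the paper glosses over. One bookkeeping slip: since $e^{(2)}_{pq}w\in S_{2}R=0$ and $we^{(2)}_{pq}=\sum_{i}w_{ip}r_{iq}$, an automorphism parameter meant to clean the $S_{2}$-lifts must be supported on $\{r_{ip}\mid i>k_{1},\ p\le k_{2}\}$; your stated $\rho_{2}\in\operatorname{span}\{r_{iq}\mid i>k_{1},\ q\ge l_{2}\}$ lies in columns on which every $e^{(2)}_{pq}$ acts as zero from the right, so $\varphi_{\rho_{2}}$ would fix all the $S_{2}$-generators and absorb nothing. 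This is easily repaired and does not affect the architecture of the argument.
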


\begin{proof}
Recall that $B$ is a Jordan-Lie inner ideal of $L=[A,A]$ such that
$B$ is bar-minimal. We identify $\bar{A}$ with $S$. By Lemma \ref{lem:B=00003DB1+...},
$\bar{B}=X_{1}\oplus X_{2}$, where $X_{i}=\bar{B}\cap S_{i}$ are
Jordan-Lie inner ideals of $S_{i}^{(1)}$. As in the proof of Proposition
\ref{prop:RA=00003D0}, we fix standard bases $\{e_{ij}\mid1\leq i,j\leq n_{1}\}$,
$\{g_{ij}\mid1\leq i,j\leq n_{2}\}$ and $\{f_{ij}\mid1\leq i\leq n_{1},\ 1\leq j\leq n_{2}\}$
of $S_{1}$, $S_{2}$ and $R$, respectively, consisting of matrix
units, such that the action of $S_{1}$ and of $S_{2}$ on $R$ corresponds
to matrix multiplication and $X_{i}=span\{E_{i}\}$, where 
\[
E_{1}=\{e_{st}\mid1\leq s\leq k_{1}<l_{1}\leq t\leq n_{1}\}\subseteq S_{1},
\]
\[
E_{2}=\{g_{rq}\mid1\leq r\leq k_{2}<l_{2}\leq q\leq n_{2}\}\subseteq S_{2}.
\]
Put $R_{0}=span\{f_{sq}\mid1\leq s\leq k_{1},\ l_{2}\leq q\leq n_{2}\}\subseteq N.$

\noun{Claim 1:} $R_{0}\subseteq B$. Fix any $b_{st},c_{rq}\in B$
such that $\bar{b}_{st}=e_{st}$ and $\bar{c}_{rq}=g_{rq}$. Then
$b_{st}=e_{st}+r_{st}$ and $c_{rq}=g_{rq}+r'_{rq}$, with $r_{st},r'_{rq}\in N$.
By Lemma \ref{lem:Jordan-Lie }, $\{b_{st},f_{tr},c_{rq}\}\in B$.
Since $R^{2}=0$ and $S_{2}R=RS_{1}=0$, we have 
\[
\{b_{st},f_{tr},c_{rq}\}=b_{st}f_{tr}c_{rq}+c_{rq}f_{tr}b_{st}=b_{st}f_{tr}c_{rq}=(e_{st}+r_{st})f_{tr}(g_{rq}+r'_{rq})=f_{sr}(g_{rq}+r'_{rq})=f_{sq}.
\]
 Therefore, $f_{sq}\in B$ for all $1\leq s\leq k_{1}$ and $l_{2}\leq q\leq n_{2}$
as required. 

\noun{Claim 2: }For every $b_{st}=e_{st}+\sum_{i=1}^{n_{1}}\sum_{j=1}^{n_{2}}\alpha_{ij}^{st}f_{ij}\in B$
we have 
\[
\theta(b_{st})=e_{st}+\sum_{j<l_{2}}\alpha_{sj}^{st}f_{sj}\in B
\]
 Since $b_{st}\in B$, by Lemma \ref{lem:Jordan-Lie }, $b_{st}e_{ts}b_{st}\in B$.
Since $RS_{1}=0$ and $R^{2}=0$, we have 
\begin{eqnarray*}
b_{st}e_{ts}b_{st} & = & (e_{st}+\sum_{i=1}^{n_{1}}\sum_{j=1}^{n_{2}}\alpha_{ij}^{st}f_{ij})e_{ts}b_{st}=e_{ss}(e_{st}+\sum_{i=1}^{n_{1}}\sum_{j=1}^{n_{2}}\alpha_{ij}^{st}f_{ij})\\
 & = & e_{st}+\sum_{j=1}^{n_{2}}\alpha_{sj}^{st}f_{sj}=\theta(b_{st})+\sum_{j=l_{2}}^{n_{2}}\alpha_{sj}^{st}f_{sj}.
\end{eqnarray*}
Since $\sum_{j=l_{2}}^{n_{2}}\alpha_{sj}^{st}f_{sj}\in R_{0}\subseteq B$
and $b_{st}e_{ts}b_{st}\in B$, we have $\theta(b_{st})\in B$ as
required.

Put $A_{2}=S_{2}\oplus R$ and $L_{2}=[A_{2},A_{2}]$. Denote $B_{2}=B\cap L_{2}$.
By Lemma \ref{lem:Bav=000026Row 2.16}, $B_{2}$ is an inner ideal
of $L_{2}$. Moreover, $B_{2}$ is a Jordan-Lie inner ideal as $(B_{2})^{2}=0$.
Note that $\bar{B}_{2}=X_{2}$ (because $B_{2}$ contains the preimage
of $X_{2}$ in $B$). By Lemma \ref{lem:B=00003DB1+B2 if L=00003DL1+L2},
$B_{2}$ is $X_{2}$-minimal. Thus, $B_{2}$ satisfies the conditions
of Proposition \ref{prop:AR=00003D0}. Hence, $B_{2}$ splits. Thus,
there is a special inner automorphism $\varphi_{2}:A\rightarrow A$
such that $E_{2}\subseteq\varphi_{2}(B_{2})\subseteq\varphi_{2}(B)$.
We will deal with the inner ideal $\varphi_{2}(B)$ of $L$. Note
that $\overline{\varphi_{2}(B)}=\bar{B}=X$ and $E_{2}\subseteq\varphi_{2}(B)$.
Our aim is to modify $\varphi_{2}(B)$ in such a way that it contains
both $E_{1}$ and $E_{2}$. 

Put $b_{st}^{(1)}=\theta(\varphi_{2}(b_{st}))\in\varphi_{2}(B)$ for
all $1\leq s\leq k_{1}<l_{1}\leq t\leq n_{1}$. Then 
\[
b_{st}^{(1)}=e_{st}+\sum_{j<l_{2}}\alpha_{sj}^{st}f_{sj}
\]
for all $s$ and $t$. Put $b_{1t}^{(2)}=b_{1t}^{(1)}=e_{1t}+\sum_{j<l_{2}}\alpha_{1j}^{1t}f_{1j}$
and for $s>1$ set $b_{st}^{(2)}=\{b_{st}^{(1)},e_{t1},b_{1t}^{(1)}\}$.
Then by Lemma \ref{lem:Jordan-Lie }, $b_{st}^{(2)}\in\varphi_{2}(B)$.
Since $RS_{1}=0$, for $s>1$ we have
\begin{eqnarray*}
b_{st}^{(2)} & = & \{b_{st}^{(1)},e_{t1},b_{1t}^{(1)}\}=b_{st}^{(1)}e_{t1}b_{1t}^{(1)}+b_{1t}^{(1)}e_{t1}b_{st}^{(1)}\\
 & = & (e_{st}+\sum_{j<l_{2}}\alpha_{sj}^{st}f_{sj})e_{t1}b_{1t}^{(1)}+b_{1t}^{(1)}e_{t1}(e_{st}+\sum_{j<l_{2}}\alpha_{sj}^{st}f_{sj})\\
 & = & e_{s1}(e_{1t}+\sum_{j<l_{2}}\alpha_{1j}^{1t}f_{1j})+0=e_{st}+\sum_{j<l_{2}}\alpha_{1j}^{1t}f_{sj}.
\end{eqnarray*}
Thus, for all $s$ and $t$ we have 
\begin{equation}
b_{st}^{(2)}=e_{st}+\sum_{j<l_{2}}\alpha_{1j}^{1t}f_{sj}.\label{eq:b^3 in Mnm}
\end{equation}
Consider the special inner automorphism $\varphi:A\rightarrow A$
defined by $\varphi(a)=(1+q)a(1-q)$ for all $a\in A$, where 
\[
q=\sum_{i=l_{1}}^{n_{1}}\sum_{j<l_{2}}\alpha_{1j}^{1i}f_{ij}.
\]
Since $RS_{1}=0$ and $R^{2}=0$, by applying $\varphi$ to (\ref{eq:b^3 in Mnm})
we obtain 
\begin{eqnarray*}
\varphi(b_{st}^{(2)}) & = & (1+q)b_{st}^{(2)}(1-q)=b_{st}^{(2)}(1-q)=(e_{st}+\sum_{j<l_{2}}\alpha_{1j}^{1t}f_{sj})(1-\sum_{i=l_{1}}^{n_{1}}\sum_{j<l_{2}}\alpha_{1j}^{1i}f_{ij})\\
 & = & e_{st}+\sum_{j<l_{2}}\alpha_{1j}^{1t}f_{sj}-\sum_{j<l_{2}}\alpha_{1j}^{1t}f_{sj}=e_{st}\in\varphi(\varphi_{2}(B)).
\end{eqnarray*}
Thus, $e_{st}\in\varphi(\varphi_{2}(B))$ for all $1\leq s\leq k_{1}<l_{1}\leq t\leq n_{1}$.
Now, by applying $\varphi$ to $g_{rq}\in X_{2}\subseteq\varphi_{2}(B)$
and using $S_{2}R=0$, we obtain 
\begin{eqnarray*}
\varphi(g_{rq}) & = & (1+q)g_{rq}(1-q)=(1+q)g_{rq}=(1+\sum_{i=l_{1}}^{n_{1}}\sum_{j<l_{2}}\alpha_{1j}^{1i}f_{ij})g_{rq}\\
 & = & g_{rq}+\sum_{i=l_{1}}^{n_{1}}\alpha_{1r}^{1i}f_{iq}\in\varphi(\varphi_{2}(B)).
\end{eqnarray*}
Since $(\varphi(\varphi_{2}(B)))^{2}=0$ and both $e_{st}$ and $\varphi(g_{rq})$
are in $\varphi(\varphi_{2}(B))$, we have 
\[
0=e_{st}\varphi(g_{rq})=e_{st}(g_{rq}+\sum_{i=l_{1}}^{n_{1}}\alpha_{1r}^{1i}f_{iq})=\alpha_{1r}^{1t}f_{sq}.
\]
Hence, $\alpha_{1r}^{1t}=0$ for all $1\leq r\leq k_{2}$ and all
$l_{1}\leq t\leq n_{1}$. Thus, $\varphi(g_{rq})=g_{rq}\in\varphi(\varphi_{2}(B))$
for all $r$ and $q$. Therefore, 

\[
E_{1}=\{e_{st}:1\leq s\leq k_{1}<l_{1}\leq t\leq n_{1}\}\subseteq\varphi(\varphi_{2}(B))\cap S
\]
and 
\[
E_{2}=\{g_{rq}:1\leq r\leq k_{2}<l_{2}\leq q\leq n_{2}\}\subseteq\varphi(\varphi_{2}(B))\cap S.
\]
Put $E=E_{1}\cup E_{2}\subseteq\varphi(\varphi_{2}(B))\cap S$. Since
$R^{2}=0$, one can easily check that $\varphi(\varphi_{2}(R_{0}))=R_{0}$.
By changing the Levi subalgebra $S$ to $S'=\varphi^{-1}(\varphi_{2}^{-1}(S))$
we prove that $B$ splits in $A$. 
\end{proof}
We will need the following result.
\begin{lem}
\label{lem:natural bi-module} Let $S$ be a semisimple finite dimensional
associative algebra and let $\{S_{i}\mid i\in I\}$ be the set of
its simple components. Suppose that $M$ is an $S$-bimodule. Then
$M$ is a direct sum of copies of $U_{ij}$, for $i,j\in I\cup\{0\}$,
where $U_{00}$ is the trivial $1$-dimensional $S$-bimodule, $U_{i0}$
is the natural left $S_{i}$-module with $U_{i0}S=0$, $U_{0j}$ is
the natural right $S$-module with $SU_{0j}=0$ and $U_{ij}$ is the
natural $S_{i}$-$S_{j}$-bimodule for $i,j>0$.
\end{lem}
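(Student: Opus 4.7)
The plan is to set up a Peirce decomposition relative to the identity of $S$ in order to split $M$ into four sub-bimodules, each of which reduces to a classical representation-theoretic problem. Since $I$ is finite, write $e_i \in S_i$ for the identity of $S_i$ and set $e = \sum_{i\in I} e_i$, which is the identity of $S$.

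The first step is to show that $M = eM \oplus M^\ell$ as $S$-bimodules, where $M^\ell := \{m \in M : Sm = 0\}$. The map $L_e : M \to M$, $m \mapsto em$, is an idempotent $\bbF$-linear endomorphism (since $e^2 = e$), so $M = eM \oplus \ker L_e$; and any $m \in eM$ written as $em'$ satisfies $em = e^2 m' = em' = m$, so $eM = \{m : em = m\}$. The crucial observation is that if $em = 0$, then for any $s \in S$ we have $sm = (se)m = s(em) = 0$ since $se = s$; hence $\ker L_e = M^\ell$, and both summands are sub-bimodules. The symmetric argument on the right gives $M = Me \oplus M^r$ with $M^r := \{m : mS = 0\}$. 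Combining the two orthogonal decompositions yields
\[
M = eMe \,\oplus\, (eM \cap M^r) \,\oplus\, (Me \cap M^\ell) \,\oplus\, (M^\ell \cap M^r),
\]
a direct sum of $S$-sub-bimodules.

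The second step is to identify each summand as a direct sum of copies of one type of $U_{ij}$. For the unital piece $eMe$, refine further using the orthogonal idempotents $\{e_i\}$, giving $eMe = \bigoplus_{i,j\in I} e_i M e_j$, where each $e_i M e_j$ is a unital $S_i$-$S_j$-bimodule, equivalently a module over the enveloping algebra $S_i \otimes_{\bbF} S_j^{op} \cong M_{n_i n_j}(\bbF)$. Since this algebra is simple, $e_i M e_j$ is a direct sum of copies of its unique simple module, which is precisely the natural bimodule $U_{ij}$. The summand $eM \cap M^r$ has trivial right $S$-action and unital left $S$-action, so decomposing as a unital left $S$-module via the idempotents $\{e_i\}$ expresses it as a direct sum of copies of $U_{i0}$; the summand $Me \cap M^\ell$ is handled symmetrically, producing copies of $U_{0j}$; and on $M^\ell \cap M^r$ the bimodule structure is trivial, so any $\bbF$-basis provides a decomposition into copies of $U_{00}$.

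The only real subtlety is the initial bimodule splitting, which is needed because $M$ is not assumed to be unital over $S$: the identification $\ker L_e = M^\ell$ makes $\ker L_e$ a genuine sub-bimodule and reduces each summand to a unital or trivial representation-theoretic problem. Once that is in place, everything else is a direct application of the Artin--Wedderburn classification over the algebraically closed field $\bbF$.
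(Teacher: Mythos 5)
Your proof is correct. It reaches the same four-block decomposition as the paper but by a different route: the paper adjoins an external identity, setting $\hat{S}=S+\bbF 1_{\hat{S}}$ and extending $M$ to a unital $\hat{S}$-bimodule, so that $\hat{S}=\bigoplus_{i\in I\cup\{0\}}S_{i}$ with one extra one-dimensional component $S_{0}=\bbF(1_{\hat{S}}-1_{S})$; the classification of unital bimodules over a semisimple algebra then yields all four types $U_{ij}$, $U_{i0}$, $U_{0j}$, $U_{00}$ uniformly as natural $S_{i}$-$S_{j}$-bimodules for $i,j\in I\cup\{0\}$, with the annihilation conditions $U_{i0}S=0$ and $SU_{0j}=0$ read off at the end. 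Your Peirce decomposition with respect to the internal identity $e=1_{S}$ produces exactly the same blocks -- $eMe$, $eM\cap M^{r}$, $Me\cap M^{\ell}$, $M^{\ell}\cap M^{r}$ correspond to $e_{i}Me_{j}$, $e_{i}Me_{0}$, $e_{0}Me_{j}$, $e_{0}Me_{0}$ in the paper's notation -- since $1-L_{e}$ and $1-R_{e}$ play the role of multiplication by $1_{\hat{S}}-1_{S}$. What your version buys is self-containedness: the identification $\ker L_{e}=M^{\ell}$ is verified directly and no extension of the module structure is needed, and the reduction of each unital block to the simple algebra $S_{i}\otimes_{\bbF}S_{j}^{op}\cong M_{n_{i}n_{j}}(\bbF)$ is made explicit rather than cited wholesale. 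What the paper's version buys is brevity and the uniform indexing over $I\cup\{0\}$. Both arguments ultimately rest on the same semisimple module theory.
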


\begin{proof}
Let $\hat{S}=S+\bbF1_{\hat{S}}$, where $1_{\hat{S}}$ is the unity
of $\hat{S}$. Then $\hat{S}$ is a unital algebra. Set $1_{\hat{S}}m=m1_{\hat{S}}=m$
for all $m\in M$. Then $M$ is a unital $\hat{S}$-bimodule. Note
that $\hat{S}=\bigoplus_{i\in I\cup\{0\}}S_{i}$, where $S_{0}=\bbF(1_{\hat{S}}-1_{S})$
is a $1$-dimensional simple component of $\hat{S}$. Thus, as a unital
$\hat{S}$-bimodule $M$ is a direct sum of copies of the natural
$S_{i}$-$S_{j}$-bimodules $U_{ij}$ such that $U_{ij}=S_{i}US_{j}$,
for all $i$ and $j$. It remains to note that $U_{i0}S=0$ and $SU_{0j}=0$. 
\end{proof}
Now, we are ready to prove Theorem \ref{thm:minimal splits if R^2=00003D0}.
\begin{proof}[Proof of Theorem \ref{thm:minimal splits if R^2=00003D0}]
 Recall that $A$ is $1$-perfect with $R^{2}=0$, $p\neq2,3$ and
$B$ is a bar-minimal Jordan-Lie inner ideal of $L=[A,A]$. Let $\{S_{i}\mid i\in I\}$
be the set of the simple components of $S$. We identify $\bar{A}$
with $S$. By Lemma \ref{lem:natural bi-module}, the $S$-bimodule
$R$ is a direct sum of copies of the natural left $S_{i}$-module
$U_{i0}$, the natural right $S_{j}$-module $U_{0j}$ and the natural
$S_{i}$-$S_{j}$- bimodule $U_{ij}$ for all $i,j\in I$. Note that
the $S$-bimodule $R$ has no components isomorphic to the trivial
$1$-dimensional $S$-bimodule $U_{00}$ as $A$ is $1$-perfect with
$R^{2}=0$. 

The proof is by induction on the length $\ell(R)$ of the $S$-bimodule
$R$. If $\ell(R)=1$, then $R=U_{ij}$ for some $i$ and $j$. Note
that $(i,j)\neq(0,0)$. Let $A_{1}=(S_{i}+S_{j})\oplus R$ and let
$A_{2}$ be the complement of $S_{i}+S_{j}$ in $S$. Then $A_{1}$
and $A_{2}$ are $1$-perfect. Note that $A_{2}A_{1}=A_{1}A_{2}=0$
so both $A_{1}$ and $A_{2}$ are ideals of $A$ with $A=A_{1}\oplus A_{2}$.
Hence $L=L_{1}\oplus L_{2}$, where $L_{i}=[A_{i},A_{i}]$ for $i=1,2$.
Since $L$ satisfies the conditions of Lemma \ref{lem:B=00003DB1+B2 if L=00003DL1+L2},
we have $B=B_{1}\oplus B_{2}$, where $B_{i}$ is a $\bar{B}_{i}$-minimal
Jordan-Lie inner ideal of $L_{i}$, $i=1,2$. Since $A_{2}$ is semisimple,
$B_{2}$ splits in $A_{2}$. Note that $B_{1}$ satisfies the conditions
of one of the Propositions \ref{prop:RA=00003D0}, \ref{prop:AR=00003D0},
\ref{prop: R is S-bimodule} and \ref{prop: A=00003DS+R, S=00003DS1+S2},
so $B_{1}$ splits in $A_{1}$. Therefore, $B$ splits in $A$. 

Assume that $\ell(R)>1$. Consider any maximal $S$-submodule $T$
of $R$, so $\ell(T)=\ell(R)-1$. Then $T$ is an ideal of $A$. Let
$\tilde{A}=A/T$. Denote by $\tilde{B}$ and $\tilde{R}$ the images
of $B$ and $R$ in $\tilde{A}$. Since $\ell(\tilde{R})=1$, by the
base of induction, $\tilde{B}$ splits, so there is a Levi subalgebra
$S'\cong S$ of $\tilde{A}$ such that $\tilde{B}=\tilde{B}_{S'}\oplus\tilde{B}_{R}$,
where $\tilde{B}_{S'}=\tilde{B}\cap S'$ and $\tilde{B}_{R}=\tilde{B}\cap\tilde{R}$.
Let $P$ be the full preimage of $\tilde{B}_{S'}$ in $B$. Then $\tilde{P}=\tilde{B}_{S'}\subseteq S'$,
so $P$ is a subspace of $B$ with $\bar{P}=\bar{B}$. Let $G$ be
the full preimage of $S'$ in $A$. Then $G$ is clearly $1$-perfect
(i.e. $G=\ccP_{1}(G)$), $\rad(G)=T$, $G/T\cong S$ and $P\subseteq B\cap G$.
Put $P_{1}=[P,[P,S'^{(1)}]]\subseteq G^{(1)}$. Then $P_{1}\subseteq[B,[B,A^{(1)}]]\subseteq B$,
so $P_{1}\subseteq B\cap G^{(1)}$. Note that $B'=B\cap G^{(1)}$
is a Jordan-Lie inner ideal of $G^{(1)}$ (because $G^{(1)}$ is a
subalgebra of $A^{(1)}$). Since 
\[
\bar{P}_{1}=[\bar{P},[\bar{P},\bar{S}'^{(1)}]]=[\bar{B},[\bar{B},\bar{A}^{(1)}]]=\bar{B},
\]
we get that $\bar{B}=\bar{P}_{1}\subseteq\bar{B}'\subseteq\bar{B}$,
so $\bar{B}'=\bar{B}$. Note that $G$ is a large subalgebra of $A$
(see Definition \ref{def:large }). Let $B''\subseteq B'$ be a $\bar{B}'$-minimal
Jordan-Lie inner ideal of $G^{(1)}$. As $G$ is $1$-perfect and
$\ell(T)<\ell(R)$, by the inductive hypothesis, $B''$ splits in
$G$. Since $B''\subseteq B'\subseteq B$ and $\bar{B}''=\bar{B}'=\bar{B}$,
by Proposition \ref{lem:Large is splits}, $B$ splits in $A$. 
\end{proof}
The following result follows from Theorem \ref{thm:minimal splits if R^2=00003D0}
and Proposition \ref{prop:C sub B splits in A}. 
\begin{cor}
\label{cor:B splits if R^2=00003D0} Let $L=[A,A]$ and let $B$ be
a Jordan-Lie inner ideal of $L$. Suppose that $p\neq2,3$, $A$ is
$1$-perfect, and $R^{2}=0$. Then $B$ splits in $A$. 
\end{cor}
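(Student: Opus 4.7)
The plan is to reduce the general statement to the bar-minimal case already settled by Theorem \ref{thm:minimal splits if R^2=00003D0}, via the lifting principle of Proposition \ref{prop:C sub B splits in A}. Concretely, given the Jordan-Lie inner ideal $B$ of $L=[A,A]$, the first step is to manufacture a Jordan-Lie inner ideal $C$ of $L$ contained in $B$ with $\bar{C}=\bar{B}$ and with $C$ being $\bar{B}$-minimal in the sense of Definition \ref{def:minimal}.

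Such a $C$ exists for a very soft reason: the collection of inner ideals $B'$ of $L$ with $B'\subseteq B$ and $\bar{B}'=\bar{B}$ is non-empty (it contains $B$) and, since $L$ is finite dimensional, admits an element of minimal dimension. Any such minimal element $C$ is automatically $\bar{B}$-minimal, and since $C\subseteq B$ gives $C^{2}\subseteq B^{2}=0$, it is still Jordan-Lie.

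Now I would feed $C$ into Theorem \ref{thm:minimal splits if R^2=00003D0}: the hypotheses ($A$ is $1$-perfect, $R^{2}=0$, $p\neq 2,3$, and $C$ is a bar-minimal Jordan-Lie inner ideal of $L=[A,A]$) are exactly what that theorem demands, so $C$ splits in $A$. Finally, since $C\subseteq B$ and $\bar{C}=\bar{B}$, Proposition \ref{prop:C sub B splits in A} immediately transfers the splitting from $C$ to $B$, completing the proof.

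There is no real obstacle here: the only non-routine ingredient is Theorem \ref{thm:minimal splits if R^2=00003D0} itself, whose delicate normalization arguments have already been carried out in the preceding propositions. The present corollary is essentially the packaging step that combines the bar-minimal reduction (via dimension) with the splitting-lifting lemma.
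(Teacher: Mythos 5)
Your proof is correct and follows exactly the route the paper intends: the paper states that the corollary "follows from Theorem \ref{thm:minimal splits if R^2=00003D0} and Proposition \ref{prop:C sub B splits in A}", and your argument simply fills in the (routine) existence of a $\bar{B}$-minimal Jordan-Lie inner ideal $C\subseteq B$ via finite dimensionality before applying those two results. No issues.
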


Now, we are ready to prove Theorem \ref{thm:B=00003DeAf }. 
\begin{proof}[Proof of Theorem \ref{thm:B=00003DeAf }]
 (i) Recall that $B$ is bar-minimal. Since $R=\rad A$ is nilpotent,
there is an integer $m$ such that $R^{m-1}\neq0$ and $R^{m}=0$.
The proof is by induction on $m$. If $m=2$ , then by Theorem \ref{thm:minimal splits if R^2=00003D0},
$B$ splits. Suppose that $m>2$. Put $T=R^{2}\neq0$ and consider
$\tilde{A}=A/T$. Let $\tilde{B}$ and $\tilde{R}$ be the images
of $B$ and $R$ in $\tilde{A}$. Then we have $\tilde{R}=\rad\tilde{A}$,
$\tilde{R}^{2}=0$ and $\tilde{A}$ satisfies the conditions of the
Corollary \ref{cor:B splits if R^2=00003D0}. Hence, there is a Levi
subalgebra $S'$ of $\tilde{A}$ such that $\tilde{B}=\tilde{B}_{S'}\oplus\tilde{B}_{R}$,
where $\tilde{B}_{S'}=\tilde{B}\cap S'$ and $\tilde{B}_{R}=\tilde{B}\cap\tilde{R}$.
Let $P$ be the full preimage of $\tilde{B}_{S'}$ in $B$. Then $\tilde{P}=\tilde{B}_{S'}\subseteq S'$,
so $P$ is a subspace of $B$ with $\bar{P}=\bar{B}$. Let $G$ be
the full preimage of $S'$ in $A$. Then $G$ is a large subalgebra
of $A$ with $P\subseteq G\cap B$. Put $P_{1}=[P,[P,S'^{(1)}]]$
and $B_{1}=B\cap G^{(1)}$. Then $P_{1}\subseteq[B,[B,A^{(1)}]]\subseteq B$
and $P_{1}\subseteq[G,[G,G]]\subseteq G^{(1)}$, so $P_{1}\subseteq B\cap G^{(1)}=B_{1}$.
Since 
\[
\bar{P}_{1}=[\bar{P},[\bar{P},\bar{S}'^{(1)}]]=[\bar{B},[\bar{B},\bar{A}^{(1)}]]=\bar{B},
\]
we get that $\bar{B}=\bar{P}_{1}\subseteq\bar{B}_{1}\subseteq\bar{B}$,
so $\bar{B}_{1}=\bar{B}$. As $G^{(1)}$ is a Lie subalgebra of $A^{(1)}$,
$B_{1}=B\cap G^{(1)}$ is a Jordan-Lie inner ideal of $G^{(1)}$.
Put $B_{2}=\core_{G^{(1)}}(B_{1})$. Then by Proposition \ref{prop:Cbar},
$B_{2}$ is a Jordan-Lie inner ideal of $\ccP_{1}(G)^{(1)}$ such
that $B_{2}\subseteq B$ and $\bar{B}_{2}=\bar{B}$. Let $B_{3}\subseteq B_{2}$
be any $\bar{B}_{2}$-minimal inner ideal of $\ccP_{1}(G)^{(1)}$.
Since $\ccP_{1}(G)$ is $1$-perfect and $\rad(\ccP_{1}(G))^{m-1}\subseteq T^{m-1}=R^{2(m-1)}=0$,
by the inductive hypothesis, $B_{3}$ splits in $\ccP_{1}(G)$. Since
$\bar{B}_{3}=\bar{B}_{2}=\bar{B}$, by Lemma \ref{lem:Large is splits},
$B$ splits in $A$. 

(ii) We wish to show that $B=eAf$ for some strict orthogonal idempotent
pair $(e,f)$ in $A$. By (i), there is a $B$-splitting Levi subalgebra
$S$ of $A$ such that $B=B_{S}\oplus B_{R}$, where $B_{S}=B\cap S$
and $B_{R}=B\cap R$. Let $\{S_{i}\mid i\in I\}$ be the set of the
simple components of $S$, so $S=\bigoplus_{i\in I}S_{i}$. We identify
$\bar{A}$ with $S$. By Lemma \ref{lem:B=00003DB1+...}, we have
$\bar{B}=\bigoplus_{i\in I}X_{i}$, where $X_{i}=\bar{B}\cap S_{i}$
for all $i\in I$. Put $J=\{i\in I\mid X_{i}\ne0\}$. By Lemma \ref{lem:X=00003Dspan=00007Be:1<i<k<l<j<n=00007D},
for each $r\in J$ there is a matrix realization $M_{n_{r}}(\bbF)$
of $S_{r}$ and integers $1\leq k_{r}<l_{r}\leq n_{r}$ such that
$X_{r}$ is spanned by the set 
\[
E_{r}=\{e_{st}^{r}\mid1\leq s\leq k_{r}<l_{r}\leq t\leq n_{r}\}\subseteq S_{r}
\]
where $\{e_{ij}^{r}\mid1\leq i,j\leq n_{r}\}$ is a basis of $S_{r}$
consisting of matrix units. Let $e=\sum_{r\in J}\sum_{i=1}^{k_{r}}e_{ii}^{r}$
and $f=\sum_{r\in J}\sum_{j=l_{r}}^{n_{r}}e_{jj}^{r}$. Then $(e,f)$
is a strict orthogonal idempotent pair in $A$ with $B_{S}=\bigoplus_{i\in J}X_{i}=eSf$.
Note that $eAf$ is a Jordan-Lie inner ideal of $[A,A]$ with $\overline{eAf}=eSf=\bar{B}$.
We are going to show that $eRf\subseteq B_{R}$. This will imply $eAf=B$
as $B$ is bar-minimal. 

By Lemma \ref{lem:natural bi-module}, the $S$-bimodule $R$ is a
direct sum of copies of the natural left $S_{i}$-module $U_{i0}$,
the natural right $S_{j}$-module $U_{0j}$, the natural $S_{i}$-$S_{j}$-
bimodule $U_{ij}$ and the trivial $1$-dimensional $S$-bimodule
$U_{00}$ for all $i,j\in I$. Let $M$ be any minimal $S$-submodule
of $R$. It is enough to show that $eMf\subseteq B$. Fix $r,q\in I$
such that $M\cong U_{rq}$. We can assume that $r,q\in J$ (otherwise
$eMf=\{0\}\subseteq B$). Let $\{f_{ij}^{rq}\mid1\leq i\leq n_{r},\ 1\leq j\leq n_{q}\}$
be the standard basis of $M$ consisting of matrix units, such that
the action of $S_{r}$-$S_{q}$ on $M$ corresponds to matrix multiplication.
Note that 
\[
eMf=span\{f_{st}^{rq}\mid1\leq s\leq k_{r},\ l_{q}\leq t\leq n_{q}\}.
\]
We need to show that $f_{st}^{rq}\in B$ for all $s$ and $t$. First,
consider the case when $r=q$. Then $s\le k_{r}<l_{r}\le t$, so $s\ne t$.
Since $e_{st}^{r}\in B$ and $f_{ts}^{rr}=[e_{tt}^{r},f_{ts}^{rr}]\in L$,
by Lemma \ref{lem:Bar=000026Row 4.1}, we have 
\[
e_{st}^{r}f_{ts}^{rr}e_{st}^{r}=f_{ss}^{rr}e_{st}^{r}=f_{st}^{rr}\in B,
\]
as required. Assume now $r\neq q$. Fix any $e_{sj}^{r}\in E_{r}$
and $e_{it}^{q}\in E_{q}$. Since $e_{sj}^{r},e_{it}^{q}\in B$ and
$f_{ji}^{rq}=[e_{jj}^{r},f_{ji}^{rq}]\in L$, using Lemma \ref{lem:Jordan-Lie },
we obtain 
\[
\{e_{sj}^{r},f_{ji}^{rq},e_{it}^{q}\}=e_{sj}^{r}f_{ji}^{rq}e_{it}^{q}+e_{it}^{q}f_{ji}^{rq}e_{sj}^{r}=f_{st}^{rq}+0\in B,
\]
as required. 

(iii) Since $B=eAf$, by Lemma \ref{eAf is regular}, $B$ is regular. 
\end{proof}
\begin{cor}
\label{thm:B splits} Let $L=[A,A]$ and let $B$ be a Jordan-Lie
inner ideal of $L$. Suppose that $p\neq2,3$ and $A$ is $1$-perfect.
Then $B$ splits in $A$. 
\end{cor}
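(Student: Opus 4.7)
The plan is to reduce the statement directly to Theorem~\ref{thm:B=00003DeAf}(i) by exhibiting, inside the given $B$, a bar-minimal Jordan-Lie inner ideal $C$ with $\bar{C}=\bar{B}$; splitness will then transfer from $C$ to $B$ via Proposition~\ref{prop:C sub B splits in A}.

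More precisely, consider the collection of inner ideals $B'$ of $L$ such that $B'\subseteq B$ and $\bar{B}'=\bar{B}$. This collection is non-empty since it contains $B$ itself, and because $L$ is finite dimensional I can choose a member $C$ of minimal dimension. Then $C$ is automatically $\bar{B}$-minimal in the sense of Definition~\ref{def:minimal}: any inner ideal $C'\subseteq C$ of $L$ with $\bar{C}'=\bar{B}$ would lie in the same collection, so by minimal dimension $C'=C$. Moreover $C$ is Jordan-Lie: $C^{2}\subseteq B^{2}=0$ since $B$ is Jordan-Lie.

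Now $C$ satisfies the hypotheses of Theorem~\ref{thm:B=00003DeAf}: $A$ is $1$-perfect, $p\neq 2,3$, and $C$ is a bar-minimal Jordan-Lie inner ideal of $L=[A,A]$. Part~(i) of that theorem therefore gives that $C$ splits in $A$. Finally, since $C\subseteq B$ and $\bar{C}=\bar{B}$, Proposition~\ref{prop:C sub B splits in A} implies that $B$ itself splits in $A$, as required.

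There is no real obstacle here; the only subtlety is checking that minimal-dimensional $C$ genuinely qualifies as bar-minimal in the sense of Definition~\ref{def:minimal} (which is why the dimension argument is phrased in terms of inner ideals of $L$, not arbitrary subspaces). Everything else is an immediate appeal to previously established results.
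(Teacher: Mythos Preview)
Your proof is correct and follows essentially the same route as the paper's own argument: pick a bar-minimal Jordan-Lie inner ideal $B'\subseteq B$ with $\bar{B}'=\bar{B}$, apply Theorem~\ref{thm:B=00003DeAf }(i) to see that $B'$ splits in $A$, and then invoke Proposition~\ref{prop:C sub B splits in A} to conclude that $B$ splits. The only difference is that you spell out the existence of such a $B'$ via a minimal-dimension argument, whereas the paper simply asserts it.
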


\begin{proof}
Let $B'\subseteq B$ be a bar-minimal Jordan-Lie inner ideal of $L$.
Then by Theorem \ref{thm:B=00003DeAf }(i), $B'$ splits in $A$.
Therefore, by Lemma \ref{prop:C sub B splits in A}, $B$ splits in
$A$. 
\end{proof}
Now we are ready to prove the main results of this paper. 
\begin{proof}[Proof of Theorem \ref{thm:main iff}]
 Suppose first that $B$ is bar-minimal. We need to show that $B=eAf$
for some strict orthogonal idempotent pair $(e,f)$ in $A$. By Lemma
\ref{lem: B =00003D =00005BB,=00005BB,L=00005D=00005D }(ii), $B$
is $L$-perfect, so by Lemma \ref{lem:B sub P(A)}, $B\subseteq\ccP_{1}(A)$
and $B$ is a Jordan-Lie inner ideal of $L_{1}=\ccP_{1}(A)^{(1)}$.
Let $C\subseteq B$ be a $\bar{B}$-minimal Jordan-Lie inner ideal
of $L_{1}$. Since $\ccP_{1}(A)$ is $1$-perfect, by Theorem \ref{thm:B=00003DeAf },
there exists a strict orthogonal idempotent pair $(e,f)$ in $\ccP_{1}(A)$
such that $C=e\ccP_{1}(A)f$. As $\ccP_{1}(A)$ is an ideal of $A$,
\[
CAC=e\ccP_{1}(A)fAe\ccP_{1}(A)f\subseteq e\ccP_{1}(A)f=C
\]
Hence, by Lemma \ref{lem:Bar=000026Row 4.1}(iii), $C$ is an inner
ideal of $L$ with $C\subseteq B$ and $\bar{C}=\bar{B}$. Since $B$
is bar-minimal, $C=B$. As $e,f\in\ccP_{1}(A)$, we have

\[
e\ccP_{1}(A)f\subseteq eAf=eeAf\subseteq e\ccP_{1}(A)Af\subseteq e\ccP_{1}(A)f.
\]
Therefore, $e\ccP_{1}(A)f=eAf$ and $B=C=eAf$ as required. 

Suppose now that $B=eAf$, where $(e,f)$ is a strict orthogonal idempotent
pair in $A$. We need to show that $B$ is bar-minimal. Let $C\subseteq B$
be a $\bar{B}$-minimal Jordan-Lie inner ideal of $L$. Then by the
``if'' part $C=e_{1}Af_{1}$ for some strict orthogonal idempotent
pair $(e_{1},f_{1})$ in $A$, so $e_{1}Af_{1}\subseteq eAf$ and
$\bar{e}_{1}\bar{A}\bar{f}_{1}=\bar{B}=\bar{e}\bar{A}\bar{f}$. Then
by Theorem \ref{thm:eAf sub e'Af' imply ...}(iv), there is a strict
idempotent pair $(e_{2},f_{2})$ in $A$ such that $(e_{2},f_{2})\le(e,f)$,
that is, $ee_{2}=e_{2}e=e_{2}$ and $f_{2}f=ff_{2}=f_{2}$. Moreover,
by Theorem \ref{thm:eAf sub e'Af' imply ...}(iv), $e_{2}Af_{2}=e_{1}Af_{1}=C$,
so $\bar{e}_{2}\bar{A}\bar{f}_{2}=\bar{B}=\bar{e}\bar{A}\bar{f}$.
We are going to show that $e_{2}=e$ (the proof of $f_{2}=f$ is similar).
Since $(e,f)$ is strict, by Theorem \ref{thm:eAf sub e'Af' imply ...}(iii)
, $\bar{e}_{2}\overset{\ccL}{\sim}\bar{e}$, so $\bar{e}=\bar{e}_{2}\bar{e}=\overline{e_{2}e}=\bar{e}_{2}$.
Hence, there is $r\in R$ such that $e_{2}=e+r$. We have 
\[
e+r=e_{2}=ee_{2}=e(e+r)=e+er,
\]
so $er=r$. Similarly, $re=r$. Since $e_{2}$ is an idempotent, 
\[
e+r=e_{2}=e_{2}^{2}=(e+r)^{2}=e+2r+r^{2}.
\]
Therefore, $r^{2}=-r$ and $r^{2^{k}}=-r$ for all $k\in\mathbb{N}$.
As $R$ is nilpotent, we get $r=0$, so $e_{2}=e$. Similarly, $f_{2}=f$.
Therefore, $B=eAf=e_{2}Af_{2}=C$, as required. 
\end{proof}

\begin{proof}[Proof of Corollary \ref{cor:innerposet}]
 First we claim that $\xi$ is well-defined, i.e. the image of $\xi$
consists of bar-minimal Jordan-Lie inner ideals of $L$. Let $(e,f)$
be a strict orthogonal idempotent pair in $A$. By Lemma \ref{lem:eAf cap Z =00003D 0}(iii),
$eAf$ is a Jordan-Lie inner ideal of $A^{(-)}$. Hence by Theorem
\ref{thm:main iff}, $eAf$ is bar-minimal, so $L$-perfect by Lemma
\ref{lem: B =00003D =00005BB,=00005BB,L=00005D=00005D }(ii). Now
by Lemma \ref{lem:B sub L^(infty)}, $eAf$ is a Jordan-Lie inner
ideal of $A^{(k)}$ for all $k\ge0$. Moreover, it is bar-minimal
by Theorem \ref{thm:main iff}, as required. By Theorem \ref{thm:main iff},
the map $\xi$ is surjective. The rest follows from Theorem \ref{thm:eAf sub e'Af' imply ...}(iv).
\end{proof}
\begin{proof}[Proof of Corollary \ref{cor:main regular}]
 Since $B$ is bar-minimal, by Theorem \ref{thm:main iff}, there
exists a strict orthogonal idempotent pair $(e,f)$ in $A$ such that
$B=eAf$. Therefore, by Lemma \ref{eAf is regular}, $B$ is regular.
\end{proof}

\begin{proof}[Proof of Corollary \ref{cor:main split}]
 Let $C\subseteq B$ be a $\bar{B}$-minimal Jordan-Lie inner ideal
of $L$. Then by Theorem \ref{thm:main iff}, there exists a strict
orthogonal idempotent pair $(e,f)$ in $A$ such that $C=eAf$, so
by Lemma \ref{lem:eAf splits}(i), $C$ splits in $A$. Therefore,
by Proposition \ref{prop:C sub B splits in A}, $B$ splits in $A$.
Moreover, $B=eSf\oplus B_{R}$ with $eRf\subseteq B_{R}=B\cap R$.
\end{proof}


\begin{thebibliography}{10}
\bibitem{Baranov1} A.A. Baranov, Lie Structure of Associative Algebras
Containing Matrix Subalgebras, \emph{Preprint }(2018), arXiv:1805.09805.

\bibitem{Bav=000026Lop} A.A. Baranov and A. Fernández López, Maximal
Zero Product Subrings and Inner Ideals of Simple Rings, \emph{Preprint
}(2016), arXiv:1609.04609.

\bibitem{Bav=000026Mud=000026Shk} A.A. Baranov, A. Mudrov and H.
Shlaka, Wedderburn-Malcev Decomposition of One-Sided Ideals of Finite
Dimensional Algebras, \emph{Communications in Algebra, }46(8): 3605\textendash 3607,
2018.

\bibitem{Bav=000026Row} A.A. Baranov and J. Rowley, Inner Ideals
of Simple Locally Finite Lie Algebras, \emph{Journal of Algebra} 379
(2013), pp. 11-30.

\bibitem{Bav=000026Zal} A.A. Baranov and A. Zalesskii, Plain Representations
of Lie Algebras, \emph{Journal of the London Mathematical Society
}63, 3 (2001), pp. 571-591.

\bibitem{Bav=000026Zal-1} A.A. Baranov and A. Zalesskii, Quasiclassical
Lie Algebras, \emph{Journal of Algebra} 243 (2001), pp. 264-293.

\bibitem{Benkart-1} G. Benkart, The Lie Inner Ideal Structure of
Associative Rings, \emph{Journal} \emph{of} \emph{Algebra} 43, 2 (1976),
pp. 561-584.

\bibitem{Benkart} G. Benkart, On Inner Ideals and ad-Nilpotent Elements
of Lie Algebras, \emph{Transactions of the American Mathematical Society
}232 (1977), pp. 61-81.

\bibitem{Ben=000026Lop} G. Benkart and A. Fernández López, The Lie
Inner Ideal Structure of Associative Rings Revisited, \emph{Communications
in Algebra}, 37 (11), (2009) p.p. 3833-3850.

\bibitem{Lopez} A. Fernández López, Lie Inner Ideals are Nearly Jordan
Inner Ideals, \emph{Proceedings} \emph{of} \emph{the} \emph{American}
\emph{Mathematical} \emph{Society} 142, 3 (2013), 795-804.

\bibitem{Lop=000026Gar=000026Loz} A. Fernández López, E. Garcia and
M. Gómez Lozano. An Artinian Theory for Lie Algebras, \emph{Journal
of Algebra} 319, 3 (2008), 938-951.

\bibitem{Lop=000026Gar=000026Loz-1} A. Fernández López, E. García.
and M. Gómez Lozano, Inner Ideal Structure of Nearly Artinian Lie
algebras, \emph{Proceedings} \emph{of} \emph{the} \emph{American}
\emph{Mathematical} \emph{Society} 137, 1 (2009), 1-9.

\bibitem{Lop=000026Gar=000026Loz=000026Neh} A. Fernández López, E.
Garcia, M. Gómez Lozano and E. Neher, A Construction of Gradings of
Lie algebras, \emph{International} \emph{Mathematics} \emph{Research}
\emph{Notices} IMRN (2007), no 16, Art. ID rnm051, 34.

\bibitem{Goodearl} K. Goodearl, \emph{Von} \emph{Neumann} \emph{Regular}
\emph{Rings}, vol. 4, London, Pitman (1979).

\bibitem{Herstein} I.N. Herstein, Lie and Jordan Structures in Simple
Associative Rings, \emph{Bulletin} \emph{of} \emph{the} \emph{American}
\emph{Mathematical} \emph{Society} 67, 6 (1961), pp. 517-531. 

\bibitem{Premet} A. A. Premet, Lie Algebras without Strong Degeneration,
\emph{Mat}. \emph{Sb}. 129, 140\textendash 153, \emph{translated}
\emph{in} \emph{Math}, \emph{USSR} \emph{Sbornik} 57 (1987), 151\textendash 164.

\bibitem{Premet1} A. A. Premet, Inner Ideals in Modular Lie Algebras,
\emph{Vestsi} \emph{Akad}. \emph{Navuk} \emph{BSSR} \emph{Ser}. \emph{Fiz}.\emph{-Mat}.
\emph{Navuk}, 5 (1986).

\bibitem{Rowley} J. Rowley, \emph{Inner} \emph{Ideals} \emph{of}
\emph{Simple} \emph{Locally} \emph{Finite} \emph{Lie} \emph{Algebras},
Dissertation, University of Leicester, (2012).
\end{thebibliography}
\end{document}